\author{Juan Guzm\'an}
\email{juan.gabriel.guzman@unc.edu.ar}
\address{Facultad de Matemática, Astronomía, Física y Computación, Universidad Nacional de Córdoba. CIEM-CONICET, Medina Allende s/n (5000) Ciudad Universitaria, Córdoba, Argentina}
\title{Gluing data for factorization monoids and vertex ind-schemes}
\date{}
\theoremstyle{plain}
\newtheorem{theorem}{Theorem}[section]
\newtheorem{lemma}[theorem]{Lemma}
\newtheorem{corollary}[theorem]{Corollary}
\newtheorem{proposition}[theorem]{Proposition}
\theoremstyle{definition}
\newtheorem{definition}[theorem]{Definition}
\newtheorem{example}[theorem]{Example}
\theoremstyle{remark}
\newtheorem{remark}[theorem]{Remark}
\newtheorem*{claim*}{Claim}
\numberwithin{equation}{section}
\newcommand{\C}{\mathbb{C}}
\newcommand{\Z}{\mathbb{Z}}
\newcommand{\Del}{\partial}
\newcommand{\lam}{\lambda}
\newcommand{\ran}{\rangle}
\newcommand{\D}{\mathcal{D}}
\newcommand{\artimes}{\overrightarrow{\otimes}}
\newcommand\blfootnote[1]{%
  \begingroup
  \renewcommand\thefootnote{}\footnote{#1}%
  \addtocounter{footnote}{-1}%
  \endgroup
}
\newcommand*\bigcdot{\mathpalette\bigcdot@{.5}}
\newcommand*\bigcdot@[2]{\mathbin{\vcenter{\hbox{\scalebox{#2}{$\m@th#1\bullet$}}}}}
\newsavebox{\tempbox}
\begin{document}

\begin{abstract}
We give an explicit description of factorization algebras over the affine line, constructing them from the gluing data determined by its corresponding OPE algebra. We then generalize this construction to factorization monoids, obtaining a description of them in terms of a non-linear version of OPE algebras which we call OPE monoids. In the translation equivariant setting this approach allows us to define vertex ind-schemes, which we interpret as a conformal analogue of the notion of Lie group, since we show that their linearizations yield vertex algebras and that their Zariski tangent spaces are Lie conformal algebras.
\end{abstract}

\maketitle

\tableofcontents

\section{Introduction}

The symmetries of a two-dimensional conformal field theory are governed by algebraic objects called vertex algebras, originally defined by Borcherds in \cite{Borcherds}.\blfootnote{This work was partially supported by CONICET and SeCyT.} In their seminal work \cite{BD1}, Beilinson and Drinfeld laid the foundations for a novel understanding of vertex algebras in terms of the geometry of smooth algebraic curves. The authors introduce the concepts of chiral, factorization and OPE algebras over a fixed smooth algebraic curve $X$, and they prove that the categories defined by each of these objects are equivalent to each other. In the case where $X$ is the affine line $\mathbb{A}^{1}=\text{Spec}\,\C[x]$, the respective subcategories formed by those objects which satisfy a translation-equivariance condition are also equivalent to each other and furthermore, they are equivalent to the category of vertex algebras.

The notion of factorization algebra offers an advantage with respect to the others: it is possible to define a non-linear version of it, called factorization monoid, and there exists a linearization functor which transforms factorization monoids into factorization algebras. This gives us a method to construct vertex algebras starting from a geometric setting, since there exist several moduli spaces which have properties that allows us to understand them in terms of a factorization monoid, and therefore the fibers of its linearization yield vertex algebras. Both the Virasoro and affine vertex algebras are some of the examples that can be obtained in this fashion, described in detail in \cite{FBZ}.

Since factorization algebras over $\mathbb{A}^{1}$ are closely related to vertex algebras, it would be reasonable to expect that factorization monoids over $\mathbb{A}^{1}$ could be similarly related to some geometric object that should be a non-linear analogue to a vertex algebra. Such an object would be of importance to us because it would play the role of a ``Lie group'' when studying vertex algebras from the point of view of Lie theory, which we pursued in \cite{BG}.

The motivation for applying a Lie theoretic perspective is based on the fact that Lie conformal algebras, which encode the singular part of the operator product expansion of a two-dimensional conformal field theory, are related to vertex algebras in a very similar way as Lie algebras are related to associative algebras: there exists a ``forgetful functor'' from the category of vertex algebras to the category of Lie conformal algebras which has a left adjoint, that assigns to any Lie conformal algebra $R$ a universal enveloping vertex algebra $\mathcal{U}(R)$ (see \cite{Bakalov_field_algebras}).

In the classical Lie theory over a field of characteristic zero, apart from the universal enveloping algebra functor we have the functor which assigns to any Lie group $G$ its tangent Lie algebra at the identity $\mathfrak{g}=T_{1}G$, and the linearization functor mapping $G$ to its algebra of distributions supported at the identity $\text{Dist}(G,1) \simeq \mathcal{U}(\mathfrak{g})$, whose dual space is the local ring at the identity of $G$. The relation between these functors is expressed by the following commutative diagram.
\begin{equation}\label{classical_Lie_theory}
\begin{tikzcd}[row sep=0.5cm, column sep=0.7cm,every cell/.append style={align=center}]
& $\left\{\textup{Lie groups} \right\}$ \arrow[d,"\textup{Dist}(-\text{,}1)"] \arrow[dl,"T_{1}"'] \\
$\left\{\textup{Lie algebras} \right\}$ \arrow[r,"\mathcal{U}"] & $\left\{\textup{Associative algebras}\right\}$
\end{tikzcd}
\end{equation}

Near the identity, the product of the Lie group $G$ can be computed through the power series expansion of its coordinate functions, which are called the formal group laws of $G$. These power series can be recovered from the Lie bracket of $\mathfrak{g}$, as explained in \cite{Serre}, and thus can be understood as an intermediate object between $\mathfrak{g}$ and $G$. In \cite{BG} we obtained a definition of \textit{formal vertex laws} associated to a Lie conformal algebra $R$, and the nature of that definition suggested us that the conformal analogue of a Lie group ought to be a non-linear version of a vertex algebra.

In order to construct such an object, we needed to have a better understanding of the way in which factorization and vertex algebras are related to each other, and the key ingredient to link them appropriately are OPE algebras. The concept of OPE algebra provides a fairly immediate ``sheafified'' version of vertex algebras, but it comes at the cost of having to deal with more subtle geometry, namely non-quasi-coherent sheaves. The fibers of a translation equivariant OPE algebra over $\mathbb{A}^{1}$ quite straightforwardly become vertex algebras, but the relation between OPE algebras and factorization algebras is more intricate. Beilinson and Drinfeld proved their equivalence in \cite{BD1} by showing that they are both equivalent to chiral algebras, and the proofs involve a number of cohomological constructions.

Nonetheless, they also remark that OPE algebras could be understood as the gluing data of their corresponding factorization algebras, although they don't provide many details about this assertion. We decided to complete these details, and obtained a proof of the equivalence of OPE and factorization algebras that only invoked geometric constructions on a certain fibered category. This allowed us to define \textit{OPE monoids} as a non-linear analogue of OPE algebras, and we prove their equivalence to factorization monoids by adapting the proof we gave in the linear case. The fibers of these OPE monoids are then the perfect candidates to be non-linear analogues of vertex algebras, and thus we call them \textit{vertex ind-schemes}. In Theorem \ref{vertex_ind-scheme_tangent_linearization} we show that our definition fulfills our expectations, since we obtain a commutative diagram similar to (\ref{classical_Lie_theory}).

This article is structured as follows. In Section \ref{preliminaries} we present the relevant definitions and give sketches for some of the proofs of the aforementioned equivalences given in \cite{BD1}. In Section \ref{OPE_algebras_are_gluing_data} we formalize the idea of OPE algebras being the gluing data of factorization algebras when the base curve is $X=\mathbb{A}^{1}$, while in Section \ref{OPE_monoids} we generalize this idea to the non-linear setting, including the definition of OPE monoids. In Section \ref{vertex_ind-schemes} we analyze the translation equivariant case, which leads us to the definition of vertex ind-schemes, and finally in Section \ref{examples} we define the vertex ind-schemes related to the Virasoro and affine vertex algebras.

\subsubsection*{Acknowledgements}
This work is part of my Ph.D. thesis under the supervision of Carina Boyallian, and this line of research was originally suggested to her by Victor Kac. I would like to thank Emily Cliff and Reimundo Heluani for helpful discussions and email communications.

\section{Preliminaries}\label{preliminaries}

All vector spaces and schemes that appear in this work are considered over the field $\C$, but in many situations we could replace it by any field of characteristic zero.

\subsection{Vertex and Lie conformal algebras}

Our main objects of study will be vertex algebras, although we will consider them in several different contexts, with varying levels of geometry involved. Nonetheless, they can be (and usually are) presented in a strictly algebraic fashion.

\begin{definition}
A \textit{vertex algebra} consists of a vector space $V$ together with a distinguished element  called \textit{vacuum} vector $|0\rangle \in V$, a linear map $T \in \text{End } V$ called the \textit{infinitesimal translation operator} and a linear operation
\begin{equation*}
Y(\cdot,z) : V \rightarrow (\text{End }V)[[z,z^{-1}]],\quad a \mapsto Y(a,z)=\sum_{n \in \Z}a_{(n)}\,z^{-n-1},
\end{equation*}
called the \textit{state-field correspondence}, subject to the following axioms:
\begin{itemize}
    \item Truncation condition: For all $a$, $b \in V$, $Y(a,z)b\in V((z))$, the vector space of Laurent power series in $z$ with coefficients in $V$.
    \item Vacuum property: $Y(|0\ran,z)=id_V$.
    \item Creation property: For all $a \in V$, $Y(a,z)|0\rangle=a+\sum_{n < -1} a_{(n)}z^{-n-1} $.
    \item Translation invariance: For all $a\in V$, $\partial_z Y(a,z) = [T,Y(a,z)]$ and $T|0\ran=0$.
    \item Locality: For all $a$, $b \in V$, $(z-w)^n [Y(a,z),Y(b,w)] = 0$ for $n>>0$.
\end{itemize}
\end{definition}

The operations $\cdot_{(n)}\cdot : V \otimes V \rightarrow V$, $a \otimes b \mapsto a_{(n)}b$ for $n \in \Z$ are called the \textit{$n$-th products} of the vertex algebra.

From the axioms of vertex algebras one can deduce a number of other properties, amongst which we find the following \textit{associativity} condition.

\begin{theorem}\label{vertex_algebra_associativity}
\cite[Thm.~3.2.1]{FBZ}
Let $V$ be a vertex algebra. For all $a, b, c \in V$, the three expressions
\begin{align*}
Y(a,z)Y(b,w)c &\in V((z))((w)),\\
Y(b,w)Y(a,z)c &\in V((w))((z))\text{ and}\\
Y(Y(a,z-w)b,w)c &\in V((w))((z-w))
\end{align*}
are the expansions in their corresponding domains of the same element of the localization $V[[z,w]]_{z,w,z-w}$.
\end{theorem}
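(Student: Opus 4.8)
The engine of the whole statement is the locality axiom, so I would organize the proof around it. The key structural remark is that the localization $V[[z,w]]_{z,w,z-w}$ embeds into each of the three rings $V((z))((w))$, $V((w))((z))$ and $V((w))((z-w))$ by expanding $(z-w)^{-1}$ (together with $z^{-1}$ and $w^{-1}$) as a geometric series in the relevant region; these expansion maps are injective, so it suffices to produce a single element $X$ of the localization whose three expansions are the three displayed products.

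For the first two products I would argue as follows. By locality choose $N$ with $(z-w)^N[Y(a,z),Y(b,w)]=0$ and apply this to $c$, giving $(z-w)^N Y(a,z)Y(b,w)c=(z-w)^N Y(b,w)Y(a,z)c$ as formal distributions. The left-hand side lies in $V((z))((w))$ and the right-hand side in $V((w))((z))$, so their common value $g(z,w)$ lies in the intersection of these two spaces, namely the series that are bounded below in each variable separately. Setting $X:=g(z,w)/(z-w)^N$ produces the required element of the localization, and by construction its expansions in the regions $z\gg w$ and $w\gg z$ are precisely the first two products.

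The genuinely harder point — and the step I expect to be the main obstacle — is to identify $Y(Y(a,z-w)b,w)c$ with the $(z-w)$-expansion of this same $X$, since this expression lives in a different expansion region and is not controlled by locality alone. My plan is to lift the identity to the level of operators (dropping $c$) and to invoke a uniqueness principle. First I would record two consequences of the axioms: translation covariance $e^{wT}Y(a,\zeta)e^{-wT}=Y(a,\zeta+w)$, obtained by exponentiating $[T,Y(a,z)]=\partial_z Y(a,z)$, and Goddard's uniqueness theorem, that a field mutually local with every $Y(e,u)$ and creating $Y(d,w)|0\rangle$ from the vacuum must coincide with $Y(d,w)$.

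Applying $X$ to the vacuum and using covariance together with the creation property $Y(b,w)|0\rangle=e^{wT}b$ gives $Y(a,z)Y(b,w)|0\rangle=e^{wT}Y(a,z-w)b$, whose $(z-w)$-expansion is exactly $Y(Y(a,z-w)b,w)|0\rangle$; thus the desired identity holds on the vacuum, coefficient by coefficient in $(z-w)$. To propagate it to an arbitrary $c$, I would note that the coefficient of $(z-w)^{-n-1}$ in the $(z-w)$-expansion of $X$ is a field in $w$ that is mutually local with every $Y(e,u)$ — this is where Dong's lemma enters, applied to the pair $Y(a,z),Y(b,w)$ — and that it creates $e^{wT}(a_{(n)}b)=Y(a_{(n)}b,w)|0\rangle$ from the vacuum by the previous computation. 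Uniqueness then forces this coefficient field to equal $Y(a_{(n)}b,w)$, and summing over $n$ recovers $Y(Y(a,z-w)b,w)$ as the $(z-w)$-expansion of $X$. The delicate part throughout is the bookkeeping that makes Dong's lemma applicable and that matches the three expansion regions, so I would treat those verifications with the most care.
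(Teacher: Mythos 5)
The paper itself does not prove this statement: it is quoted verbatim from \cite[Thm.~3.2.1]{FBZ}, so the natural comparison is with the proof given there. Your argument is correct in outline, but it takes a genuinely different route. You and \cite{FBZ} agree on the first half: locality gives $(z-w)^N Y(a,z)Y(b,w)c=(z-w)^N Y(b,w)Y(a,z)c$, the common value lies in $V[[z,w]][z^{-1},w^{-1}]$, and dividing by $(z-w)^N$ in the localization handles the two ``operator-ordering'' expansions. For the third expansion, \cite{FBZ} argues purely algebraically: using skew-symmetry $Y(a,z)b=e^{zT}Y(b,-z)a$ and the conjugation formula, both $Y(a,z)Y(b,w)c$ and $Y(Y(a,z-w)b,w)c$ are rewritten as $e^{wT}$ applied to the two orderings of the product of $Y(c,\cdot)$ and $Y(a,\cdot)$ in the variables $-w$ and $z-w$, and locality for that pair closes the loop. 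You instead run the Goddard-uniqueness/Dong's-lemma argument (essentially the proof in \cite{Kac_beginners}): vacuum computation plus uniqueness identifies each coefficient field of the diagonal expansion with $Y(a_{(n)}b,w)$. Both routes are valid; the skew-symmetry route is shorter and avoids field-theoretic machinery, while yours is more modular and makes explicit that the diagonal expansion is governed by the $n$-th products. The one caveat in your sketch is the step you defer as ``bookkeeping'': identifying \emph{all} coefficients of the $(z-w)$-expansion of $X$ --- including the non-negative powers, i.e.\ the normally ordered part --- with the residue-formula $n$-th product fields to which Dong's lemma and the uniqueness theorem apply. This is a standard lemma on mutually local fields, but it must be invoked for every $n\in\Z$, not only for the singular terms, and your proof is incomplete without it.
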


A very important class of vertex algebras are those that can be realized as universal envelopes of Lie conformal algebras, which we will now introduce.

\begin{definition}
A \textit{Lie conformal algebra} is a $\C[T]$-module $R$ together with a \textit{$\lam$-bracket}
\begin{equation*}
[\cdot_{\lam}\cdot]: R \otimes R \rightarrow R[\lam],\quad a \otimes b \mapsto \sum_{n \geq 0} \frac{1}{n!} \lam^{n} [a_{(n)}b],
\end{equation*}
satisfying the following identities:
\begin{itemize}
    \item $T$-sesquilinearity: For all $a, b \in R$, $[T a_{\lam}b] = - \lam [a_{\lam}b]$ and $[a_{\lam}T b] = (T+\lam) [a_{\lam}b]$.
    \item Antisymmetry: For all $a, b \in R$, $[a_{\lam}b]=-[b_{-\lam-T}a]$.
    \item Jacobi identity: For all $a, b, c \in R$, $[a_{\lam}[b_{\mu}c]] = [[a_{\lam}b]_{\lam+\mu}c] + [b_{\mu}[a_{\lam}c]]$.
\end{itemize}
\end{definition}

Note that Lie conformal algebras are also endowed with $n$-th products, but they are only defined for $n \geq 0$. In fact, every vertex algebra is a Lie conformal algebra if we forget about its negative $n$-th products, thus defining a functor from the category of vertex algebras to the category of Lie conformal algebras. This functor admits a left adjoint $R \mapsto \mathcal{U}(R)$, whose construction we now recall.

First, we define for any given Lie conformal algebra $R$ the Lie algebra
\begin{equation*}
\text{Lie}\,R=(R \otimes \C[t,t^{-1}]) / (1\otimes \Del_{t}+T \otimes 1)(R \otimes \C[t,t^{-1}]),
\end{equation*}
with Lie bracket given by the formula
\begin{equation*}
[a_{m}, b_{n}]=\sum_{j \geq 0} \binom{m}{j} [a_{(j)}b]_{m+n-j}
\end{equation*}
for all $a, b \in R$ and $m,n \in \Z$, where $a_{m}$ denotes the image of $a\otimes t^{m}$ in $\text{Lie}\,R$. We can turn $\text{Lie}\,R$ into a $\C[T]$-module by setting $T(a_{m})=-m\,a_{m-1}$. Let $\text{Lie}\,R_{-}$ (resp. $\text{Lie}\,R_{+}$) be the Lie subalgebra of $\text{Lie}\,R$ spanned by all $a_{m}$ with $m \geq 0$ (resp., $m < 0$). Then the map
\begin{equation}\label{isomorphism_R_Lie_R_+}
R \rightarrow \text{Lie}\,R_{+}
\end{equation}
given by $a \mapsto a_{-1}$ is an isomorphism of $\C[T]$-modules (see \cite[Lemma~2.7]{Kac_beginners}), which allows us to transport the Lie algebra structure from $\text{Lie}\,R_{+}$ onto $R$. We denote $R$ with this Lie algebra structure as $R_{\text{Lie}}$. Then its universal enveloping algebra is
\begin{equation*}
\mathcal{U}(R_{\text{Lie}}) \simeq \mathcal{U}(\text{Lie}\,R_{+}) \simeq \text{Ind}_{\text{Lie}\,R_{-}}^{\text{Lie}\,R} \C|0\rangle,
\end{equation*}
where $\C |0\rangle$ is the trivial one-dimensional $\text{Lie}\,R_{-}$-module. One can show (see \cite[Thm.~7.12]{Bakalov_field_algebras}) that this vector space has a vertex algebra structure, which is called the \textit{universal enveloping vertex algebra} of $R$ and is denoted by $\mathcal{U}(R)$.

\begin{example}\label{current_conformal_algebra}
Let $\mathring{\mathfrak{g}}$ be a finite-dimensional Lie algebra. Its associated \textit{current conformal algebra} is the Lie conformal algebra $R=\text{Cur}\,\mathring{\mathfrak{g}}=\C[T] \otimes \mathring{\mathfrak{g}}$, with $\lambda$-bracket determined by $[a_{\lambda}b]=[a,b]$ for all $a,b \in \mathring{\mathfrak{g}}$. We have an isomorphism $\text{Lie}\,R \simeq \mathring{\mathfrak{g}}[t,t^{-1}]$, the \textit{loop algebra} of $\mathring{\mathfrak{g}}$, with brackets given by
\begin{equation*}
[a_{m},b_{n}]=[a,b]_{m+n}
\end{equation*}
for all $a,b \in R$ and $m,n \in \Z$. Similarly, $\text{Lie}\,R_{-} \simeq \mathring{\mathfrak{g}}[t]$ is the \textit{positive loop algebra} of $\mathring{\mathfrak{g}}$. The universal enveloping vertex algebra of $\text{Cur}\,\mathring{\mathfrak{g}}$ is
\begin{equation*}
V^{0}(\mathring{\mathfrak{g}})=\text{Ind}_{\mathring{\mathfrak{g}}[t]}^{\mathring{\mathfrak{g}}[t,t^{-1}]} \C|0\rangle.
\end{equation*}
It is called the \textit{affine vertex algebra at level zero} associated with $\mathring{\mathfrak{g}}$.
\end{example}

\begin{example}\label{Virasoro_conformal_algebra}
The \textit{Virasoro conformal algebra} is the Lie conformal algebra $R=\text{Vir}=\C[T]L$, with $\lambda$-bracket determined by $[L_{\lambda}L]=TL+ 2\lambda L$. On the other hand, the \textit{Witt algebra} is the Lie algebra $\text{Der}\,\C[t,t^{-1}]=\C[t,t^{-1}]\Del_t$ of regular vector fields on $\C^{\times}$, which has a basis formed by the elements $L(n)=-t^{n+1}\Del_{t}$ with $n \in \Z$, and has Lie brackets given by
\begin{equation*}
[L(m),L(n)]=(m-n)L(m+n)
\end{equation*}
for all $n,m\in \Z$. It is a $\C[T]$-module with $T=\text{ad}\,L(-1)$. We have an isomorphism of Lie algebras $\text{Lie}\,R \simeq \text{Der}\,\C[t,t^{-1}]$ which respects the action of $T$, determined by the identification $L_{n+1} \mapsto L(n)$ for all $n \in \Z$. Via this isomorphism, $\text{Lie}\,R_{-}$ is identified with the Lie subalgebra $\text{Der}\,\C[t]=\C[t]\Del_{t}$, generated by $\{L(n)\,:\,n \geq -1\}$. The universal enveloping vertex algebra of $\text{Vir}$ is
\begin{equation*}
Vir_{0}=\text{Ind}_{\text{Der}\,\C[t]}^{\text{Der}\,\C[t,t^{-1}]} \C|0\rangle.
\end{equation*}
It is called the \textit{Virasoro vertex algebra} of central charge zero.
\end{example}

\subsection{Chiral and Lie* algebras}

The remainder of the preliminaries will serve as a brief survey on some of the concepts in Beilinson and Drinfeld's work \cite{BD1}, mainly the notions of chiral, factorization and OPE algebras. These make sense over more general curves $X$ (and even over higher dimensions, c.f. \cite{FG}), but we will mostly be interested in the case $X=\mathbb{A}^1$ so the reader may assume that throughout the text.

Firstly, let us set some notation: for any scheme $Z$, $\mathcal{O}_Z$ and $\mathcal{D}_Z$ are its structure sheaf and its sheaf of differential operators. An $\mathcal{O}$-module over $Z$ is a quasi-coherent sheaf of $\mathcal{O}_Z$-modules, and a left (resp. right) $\mathcal{D}$-module over $Z$ is a sheaf of left (resp. right) $\mathcal{D}_Z$-modules which is quasi-coherent as a sheaf of $\mathcal{O}_Z$-modules. We shall denote by $\mathcal{M}_{\mathcal{O}}(Z)$, $\mathcal{M}^{l}(Z)$ and $\mathcal{M}^r(Z)$ the categories of $\mathcal{O}$-modules, left and right $\mathcal{D}$-modules respectively. For a morphism of schemes $f: Z_1 \rightarrow Z_2$ we will denote by $f^{*}$ and $f_{*}$ the $\mathcal{O}$-module pullback and pushforward functors, while $f^{!}$ and $f_{!}$ will stand for the left or right $\mathcal{D}$-module pullback and pushforward (we will almost never refer to their derived versions). The external tensor product of the $\mathcal{O}_{Z_1}$-module $\mathcal{V}$ and the $\mathcal{O}_{Z_2}$-module $\mathcal{W}$ is the $\mathcal{O}_{Z_1 \times Z_2}$-module $\mathcal{V} \boxtimes \mathcal{W} = \pi_1^{*}\mathcal{V} \otimes_{\mathcal{O}_{Z_1 \times Z_2}} \pi_2^{*}\mathcal{W}$, where $\pi_i:Z_1 \times Z_2 \rightarrow Z_i$ are the projections. Note that the external tensor product of left (resp. right) $\mathcal{D}$-modules is again a left (resp. right) $\mathcal{D}$-module.

We shall work with both left and right $\mathcal{D}$-modules, so it is important to remark that there is a canonical way to turn one into the other: if $\omega_{Z}$ is the canonical invertible sheaf of top-degree forms on $Z$, the functor $\mathcal{M}^l(Z) \rightarrow \mathcal{M}^r(Z)$, $\mathcal{V} \mapsto \mathcal{V}^r = \mathcal{V} \otimes_{\mathcal{O}_Z} \omega_{Z}$ is an equivalence of categories, whose inverse $\mathcal{A} \mapsto \mathcal{A}^l$ is given by tensoring with $\omega_{Z}^{-1}$. A vector field $\tau \in \Theta_{Z} \subseteq \mathcal{D}_Z$ acts on $\mathcal{V}^r$ as $(v \otimes \alpha) \cdot \tau = v \otimes \alpha \cdot \tau - \tau \cdot v \otimes \alpha$, where the canonical right $\mathcal{D}_Z$-module structure on $\omega_Z$ is defined using the Lie derivative of $\tau$, via $\alpha \cdot \tau = - Lie_{\tau}(\alpha)$.

Let $X$ be a smooth algebraic curve over $\C$. We will denote by $\Delta: X \rightarrow X^{2}$ the diagonal map, and $j:U \hookrightarrow X^{2}$ its complementary open embedding. More generally, if we define $fSet$ to be the category of all nonempty finite sets with morphisms being all surjections between them, then for any $I \in fSet$ we can define $\Delta^{(I)}:X \rightarrow X^{I}$ to be the principal diagonal and $U^{(I)}= \{ \{x_{i} \}_{i \in I} : x_{i}\neq x_{i'} \text{ when } i\neq i' \}$, i.e. the complement to the diagonal divisor for $|I|\geq 2$. We shall denote by $j^{(I)}: U^{(I)} \hookrightarrow X^{I}$ the canonical open embedding.

Now for any given $\mathcal{A} \in \mathcal{M}^{r}(X)$ and $I \in fSet$ we define a \textit{chiral $I$-operation in $\mathcal{A}$} as an element of the set
\begin{equation*}
P^{ch}(\mathcal{A})_{I}=\text{Hom}_{\mathcal{M}^{r}(X^{I})}(j^{(I)}_{*}j^{(I)*}\mathcal{A}^{\boxtimes I},\Delta^{(I)}_{!}\mathcal{A}).
\end{equation*}
These operations can be composed in the following way: given a surjection $\pi: J \rightarrow I$ in $fSet$, with fibers $J_{i}=\pi^{-1}(\{i\})$ for $i \in I$, the composition of $\gamma \in P^{ch}(\mathcal{A})_{I}$ with $\{\delta_{i} \in P^{ch}(\mathcal{A})_{J_{i}}\}_{i\in I}$ is the element of $P^{ch}(\mathcal{A})_{J}$ defined by
\begin{align*}
j^{(J)}_{*}j^{(J)*}\mathcal{A}^{\boxtimes J}= j^{(\pi)}_{*}j^{(\pi)*}\boxtimes_{i \in I}&j^{(J_{i})}_{*}j^{(J_{i})*}\mathcal{A}^{\boxtimes J_{i}} \xrightarrow{j^{(\pi)}_{*}j^{(\pi)*}\boxtimes_{i\in I}\delta_{i}}j^{(\pi)}_{*}j^{(\pi)*}\Delta^{(J_{i})}_{!}\mathcal{A}\\
&=\Delta^{(\pi)}_{!}j^{(I)}_{*}j^{(I)*}\mathcal{A}^{I} \xrightarrow{\Delta^{(\pi)}_{!} \gamma} \Delta^{(\pi)}_{!} \Delta^{(I)}_{!}\mathcal{A}=\Delta^{(J)}_{!}\mathcal{A},
\end{align*}
where
\begin{equation}\label{def_Delta_pi}
\Delta^{(\pi)}=\prod_{i \in I} \Delta^{(J_{i})}: X^{I} \rightarrow X^{J}
\end{equation}
is the diagonal map determined by $\pi$ and $j^{(\pi)}:U^{(\pi)} \hookrightarrow X^J$ is the complement to the diagonals containing $X^I$, i.e.
\begin{equation}\label{def_U_pi}
U^{(\pi)}= \{ \{x_j\}_{j \in J} \in X^{J} : x_{j} \neq x_{j'} \text{ when }\pi(j)\neq \pi(j')\}.
\end{equation}

Therefore chiral operations form an operad, which we denote by $P^{ch}(\mathcal{A})$ and call the \textit{chiral operad}.

\begin{definition}
A \textit{non-unital chiral algebra} over $X$ is a right $\D$-module $\mathcal{A}$ over $X$ together with a chiral product
\begin{equation*}
\mu : j_* j^* \mathcal{A} \boxtimes \mathcal{A} \rightarrow \Delta_{!} \mathcal{A}
\end{equation*}
in $P^{ch}(\mathcal{A})_{\{1,2\}}$, which is skew-symmetric and satisfies the Jacobi identity in the sense that it defines a morphism of operads from the Lie operad $\mathcal{L}ie$ to $P^{ch}(\mathcal{A})$. In other words, it is a Lie algebra over the chiral operad.
\end{definition}
The sections of the sheaf $j_{*} j^{*} \mathcal{A} \boxtimes \mathcal{A}$ are sections of $\mathcal{A} \boxtimes \mathcal{A}$ where we allow poles along the diagonal. On the other hand, the pushforward $\Delta_{!}\mathcal{A}$ may be computed through the canonical decomposition of the $\mathcal{D}_{X^2}$-module $\omega_{X} \boxtimes \mathcal{A}$ along the closed subvariety $\Delta$ (see \cite[Sect.~20.2.3]{FBZ}), which yields the exact sequence
\begin{equation*}
0 \rightarrow \omega_{X} \boxtimes \mathcal{A} \rightarrow j_{*} j^{*} \omega_{X} \boxtimes \mathcal{A} \rightarrow \Delta_{!} \mathcal{A} \rightarrow 0.
\end{equation*}

This construction allows us to define \textit{unital} chiral algebras, which we shall simply call chiral algebras.
\begin{definition}
A \textit{chiral algebra} over $X$ is a non-unital chiral algebra $(\mathcal{A},\mu)$ together with a unit map $u:\omega_X \rightarrow \mathcal{A}$, which is a morphism of right $\mathcal{D}_{X}$-modules such that the restriction of $\mu$ to $j_{*}j^{*} \omega_X \boxtimes \mathcal{A}$ via $u \boxtimes id_{\mathcal{A}}$ is the canonical projection to $\Delta_{!}\mathcal{A}$.
\end{definition}

The $\mathcal{D}_X$-module $\mathcal{A}=\omega_{X}$ together with the canonical projection $\mu_{\omega_{X}}: j_{*}j^{*} \omega_X \boxtimes \omega_X \twoheadrightarrow \Delta_{!} \omega_X$ is an example of a chiral algebra (see \cite{BD1}).

We will now describe the relationship between vertex algebras and chiral algebras over $\mathbb{A}^1$. Recall that for any affine scheme $Z=\text{Spec}\,A$ there is an equivalence of categories
\begin{equation*}
\mathcal{M}_{\mathcal{O}}(Z) \simeq Mod_A
\end{equation*}
given by the global sections functor $\mathcal{V} \rightarrow V=\Gamma(Z,\mathcal{V})$. Its inverse is the localization functor $V \mapsto \tilde{V}$, where $\tilde{V}(D_f)=V_f$ for each distinguished open affine $D_f \subseteq Z$, with $f\in A$. Similarly
\begin{equation*}
\mathcal{M}^l(Z) \simeq Mod_{\text{Diff}\,A}^l \quad \text{and} \quad \mathcal{M}^r(Z) \simeq Mod_{\text{Diff}\,A}^r,
\end{equation*}
where $\text{Diff}\,A$ is the algebra of differential operators with coefficients in $A$. In particular, an $\mathcal{O}$-module over $\mathbb{A}^n$ is determined by its $\C[x_1,...,x_n]$-module of global sections, and a left/right $\mathcal{D}$-module over $\mathbb{A}^n$ is in turn determined by its left/right $\C[x_1,...,x_n,\partial_{x_1},...,\partial_{x_n}]$-module of global sections. 

\begin{remark}\label{vertex_to_chiral}
Given a vertex algebra $(V,Y,T,|0\ran)$, the following data defines a chiral algebra over $\mathbb{A}^1$:
\begin{itemize}
    \item A right $\mathcal{D}_{\mathbb{A}^1}$-module $\mathcal{A}$ determined by $\Gamma(\mathbb{A}^1,\mathcal{A})=V[x] dx$, where the right action of $\C[x,\Del_{x}]$ is defined as $a\,x^i dx \cdot \partial_{x} = Ta \, x^i dx - a\, \partial_{x} x^i dx$. Note that $\mathcal{A}=\mathcal{V}^r$, where $\mathcal{V} \in \mathcal{M}^{l}(\mathbb{A}^1)$ is determined by $\Gamma(\mathbb{A}^1,\mathcal{V})=V[x]$ with the left action of $\mathcal{D}_{\mathbb{A}^1}$ where $\partial_x$ acts as $\partial_x - T$.
    \item A chiral product $\mu : j_* j^* \mathcal{A} \boxtimes \mathcal{A} \rightarrow \Delta_{!} \mathcal{A}$ given in global coordinates by the map
    \begin{align*}
        V \otimes V [x,y][(x - y)^{-1}] dx \wedge dy &\rightarrow V [x,y] [(x - y)^{-1}] dx \wedge dy\,/\, V[x,y] dx \wedge dy\\
        a \otimes b\, \alpha(x,y) &\mapsto Y(a,x-y)b\, \alpha(x,y) \text{ mod }V[x,y] dx \wedge dy
    \end{align*}
    \item A unit map $u: \omega_{\mathbb{A}^1} \rightarrow \mathcal{A}$ given by the global section $|0\ran \, dx$ of $\mathcal{A}$.
\end{itemize}
\end{remark}

This chiral algebra has a special property: it is \textit{translation equivariant}, which roughly speaking means that it induces the same vertex algebra structure over each of its fibers over $\mathbb{A}^{1}$. We will now give a precise definition.

Let $\mathbb{G}_a$ be the additive group of translations on the line $\mathbb{A}^1$, acting via $a: \mathbb{G}_a \times \mathbb{A}^1 \rightarrow \mathbb{A}^1$, $(t,x) \mapsto t+x$, and let $p: \mathbb{G}_a \times \mathbb{A}^1 \rightarrow \mathbb{A}^1$ be the projection to the second factor. An $\mathcal{O}$-module $\mathcal{F}$ over $\mathbb{A}^1$ is \textit{$\mathbb{G}_a$-equivariant} if there exists an isomorphism $\psi : p^{*}\mathcal{F} \rightarrow a^{*} \mathcal{F}$ of $\mathcal{O}$-modules over $\mathbb{G}_a \times \mathbb{A}^1$, such that the following compatibilities hold:

\begin{enumerate}
    \item If $a_3,\,p_3 : \mathbb{G}_a^2 \times \mathbb{A}^1 \rightarrow \mathbb{A}^1$ are defined as $a_3(t_1,t_2,x)=t_1+t_2+x$ and $p_3(t_1,t_2,x)=x$, then the following diagram commutes in the category of $\mathcal{O}_{\mathbb{G}_a^2 \times \mathbb{A}^1}$-modules
    
    \hspace{-1.9cm}
    \begin{tikzpicture}
    \node (n1) at (0,0) {$(a \times id)^* p^* \mathcal{F} = p_3^* \mathcal{F} = (p \times id)^* p^* \mathcal{F}$};
    \node (n2) at (7.3,0) {$(p \times id)^* a^* \mathcal{F} = (id \times a)^* p^* \mathcal{F}$};
    \node (n3) at (3.5,-2) {$(a \times id)^* a^* \mathcal{F} = a_3^* \mathcal{F} = (id \times a)^* a^* \mathcal{F}$};
    \draw [->] (n1) to node[auto] {$(p \times id)^* \psi$} (n2);
    \draw [->] (7.3,-0.3) to node[auto,pos=0.4] {$(id \times a)^* \psi$} (4.5,-1.5);
    \draw [->] (-0.5,-0.3) to node[auto,',pos=0.4] {$(a \times id)^* \psi$} (2.5,-1.5);
    \end{tikzpicture}
    \item If $i_0 : \mathbb{A}^1 \rightarrow \mathbb{G}_a \times \mathbb{A}^1$ is the embedding $x \mapsto (0,x)$, then $i_0^* \psi = id_{\mathcal{F}}$.
\end{enumerate}

\begin{definition}
\begin{enumerate}[label=(\roman*)]
    \item A left/right $\mathcal{D}_{\mathbb{A}^1}$-module is \textit{translation equivariant} if it is $\mathbb{G}_a$-equivariant as an $\mathcal{O}_{\mathbb{A}^1}$-module and the map $\psi$ is an isomorphism of $\mathcal{O}_{\mathbb{G}_a} \boxtimes \mathcal{D}_{\mathbb{A}^1}$-modules.
    \item A non-unital chiral algebra $(\mathcal{A},\mu)$ is \textit{translation equivariant} if it is so as a $\mathcal{D}_{\mathbb{A}^1}$-module and for all $t \in \mathbb{G}_a$ we have that $\mu=\mu^t$, where $\mu^t$ is given in global coordinates by $$a \otimes b\,f(x,y) \mapsto \mu(a \otimes b\,f(x-t,y-t))\big|_{x=x+t}.$$
    Note that $\omega_{\mathbb{A}^1}$ is a translation equivariant non-unital chiral algebra.
    \item A chiral algebra is \textit{translation equivariant} if it is so as a non-unital chiral algebra and the unit map is equivariant under the $\mathbb{G}_a$-action.
\end{enumerate}
\end{definition}

We have the following result from \cite{BD1} (a detailed proof can be found in \cite{BDHK}), which in particular tells us that the chiral algebras over $\mathbb{A}^1$ defined in Remark \ref{vertex_to_chiral} are exactly the translation equivariant ones.

\begin{theorem}\label{translation_equiv_chiral_are_vertex}
$\,$
\begin{enumerate}
    \item The assignment $V \rightarrow \widetilde{V[x]}$ defines an equivalence between the category of vector spaces and the category of $\mathbb{G}_a$-equivariant $\mathcal{O}_{\mathbb{A}^1}$-modules.
    \item The assignment $(V,T) \rightarrow \widetilde{V[x] dx}$ defines an equivalence between the category of differential vector spaces and the category of translation equivariant right $\mathcal{D}_{\mathbb{A}^1}$-modules.
    \item The assignment $(V,Y,T,|0\ran) \rightarrow (\mathcal{A},\mu,u)$ given in Remark \ref{vertex_to_chiral} defines an equivalence between the category of vertex algebras and the category of translation equivariant chiral algebras over $\mathbb{A}^1$.
\end{enumerate}
\end{theorem}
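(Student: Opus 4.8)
The plan is to establish the three equivalences in order, each refining the previous one by adjoining structure: part~(1) fixes the underlying $\mathcal{O}_{\mathbb{A}^1}$-module, part~(2) adjoins the right $\mathcal{D}_{\mathbb{A}^1}$-module structure (equivalently, the operator $T$), and part~(3) adjoins the chiral product and unit (equivalently, $Y$ and $|0\rangle$). Parts~(1) and~(2) are essentially formal consequences of $\mathbb{G}_a$-equivariance, while the genuine content lies in part~(3). Throughout I would construct an explicit quasi-inverse to the functor of Remark~\ref{vertex_to_chiral} and check that every step of the dictionary is natural and reversible, so that fully faithfulness and essential surjectivity follow simultaneously.

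For part~(1), I would argue by descent along the $\mathbb{G}_a$-torsor $\mathbb{A}^1=\mathbb{G}_a \to \mathrm{pt}$. The functor $V \mapsto \widetilde{V[x]}=V\otimes_{\C}\mathcal{O}_{\mathbb{A}^1}$ carries a tautological equivariant structure, and I claim its quasi-inverse is the fiber at the origin $\mathcal{F}\mapsto V:=i_0^{*}\mathcal{F}$. Working over $\mathbb{A}^1=\mathrm{Spec}\,\C[x]$, an $\mathcal{O}$-module is a $\C[x]$-module $M$, and on global sections $\psi\colon p^{*}\mathcal{F}\to a^{*}\mathcal{F}$ becomes a $\C[t,x]$-linear isomorphism encoding a $\mathbb{G}_a$-coaction on $M$: the cocycle diagram is coassociativity and the normalization $i_0^{*}\psi=\mathrm{id}$ is counitality. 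Restricting $\psi$ along the graph $t=-x$ (so that $a$ becomes the constant map to $0$) produces an isomorphism $\mathcal{F}\xrightarrow{\sim}\widetilde{V[x]}$, and naturality of this trivialization yields fullness and faithfulness.

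For part~(2), part~(1) gives that the underlying $\mathcal{O}$-module is $\widetilde{V[x]\,dx}$ for a vector space $V$. A right $\mathcal{D}$-module structure is determined by the action of $\partial_x$, and I would show that translation equivariance (that $\psi$ is an isomorphism of $\mathcal{O}_{\mathbb{G}_a}\boxtimes\mathcal{D}_{\mathbb{A}^1}$-modules) forces this action to have constant coefficients, i.e. $a\,x^{i}\,dx\cdot\partial_x=(Ta)\,x^{i}\,dx-i\,a\,x^{i-1}\,dx$ for a single endomorphism $T\in\mathrm{End}\,V$, namely the action of $\partial_x$ on the constant sections $V\,dx$, which equivariance guarantees lands back in $V\,dx$. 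Conversely any $T$ defines such a structure, and this is manifestly functorial, matching the data of a differential vector space $(V,T)$. For part~(3), I would then transport $\mu$ and $u$ into the global coordinates of Remark~\ref{vertex_to_chiral}. Using part~(2) together with the description of $\Delta_!\mathcal{A}$ recalled in the preliminaries, translation equivariance $\mu=\mu^{t}$ reduces all the information of $\mu$ to its dependence on $x-y$. Evaluating on the polar sections $a\otimes b\,(x-y)^{-m}$ and reading off coefficients modulo $V[x,y]\,dx\wedge dy$ recovers the products $a_{(n)}b$ for all $n\ge -m$, so letting $m$ grow recovers the entire series $Y(a,z)b=\sum_n a_{(n)}b\,z^{-n-1}$; the requirement that each $\mu(a\otimes b)$ be a genuine section of $\Delta_!\mathcal{A}$ (hence of bounded pole order along the diagonal) is precisely the truncation condition $Y(a,z)b\in V((z))$. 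The unit axiom, with $u(dx)=|0\rangle\,dx$, translates into the vacuum property $Y(|0\rangle,z)=\mathrm{id}$, and the creation property together with translation invariance $\partial_z Y(a,z)=[T,Y(a,z)]$, $T|0\rangle=0$ follow from combining this with the $\mathcal{D}_{\mathbb{A}^1}$-linearity of $\mu$ on $\mathbb{A}^2$ and the identification of $T$ from part~(2).

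The main obstacle is matching the remaining structure: I expect the skew-symmetry and Jacobi identity that make $\mu$ a Lie algebra over the chiral operad to be equivalent, under the above dictionary, to the locality axiom, equivalently to the associativity of Theorem~\ref{vertex_algebra_associativity}. Concretely, one must expand the chiral Jacobi identity — an equality of maps into $\Delta^{(\{1,2,3\})}_!\mathcal{A}$ on $X^{3}$, assembled from the three partial diagonals and the open loci $U^{(\pi)}$ via the operadic composition — in global coordinates on $\mathbb{A}^{3}$, and identify its three terms with the three expansions $Y(a,z)Y(b,w)c$, $Y(b,w)Y(a,z)c$ and $Y(Y(a,z-w)b,w)c$. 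The content is then exactly that these are expansions, in $V((z))((w))$, $V((w))((z))$ and $V((w))((z-w))$ respectively, of a common element of $V[[z,w]]_{z,w,z-w}$, which is the associativity statement and is equivalent to locality given the remaining axioms; the chiral skew-symmetry corresponds to the identity $Y(a,z)b=e^{zT}Y(b,-z)a$. This Cousin-type identification on $X^{3}$, keeping careful track of the operadic composition and of the $\delta$-supported terms along each diagonal, is where the real work resides, and once it is in place the equivalence follows because each stage of the dictionary is natural and invertible.
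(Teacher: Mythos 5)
The paper itself does not prove Theorem \ref{translation_equiv_chiral_are_vertex}: it quotes the result from \cite{BD1} and points to \cite{BDHK} for a detailed proof, so your proposal must be measured against those references and against the partial dictionary the paper does spell out (Remark \ref{vertex_to_chiral} and the surrounding discussion). Your outline follows the standard direct approach and its skeleton is sound. Parts (1) and (2) are correct and essentially complete: trivializing an equivariant sheaf by restricting $\psi$ along the graph $t=-x$, and observing that equivariance forces the right action of $\partial_x$ to preserve the constant sections $V\,dx$, which is exactly how the paper's formula $a\,x^i dx\cdot\partial_x = Ta\,x^i dx - a\,\partial_x x^i dx$ arises. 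In part (3) the dictionary you set up (unit $\leftrightarrow$ vacuum, bounded pole order of sections of $\Delta_!\mathcal{A}$ $\leftrightarrow$ truncation, $\mathcal{D}$-linearity and horizontality of $u$ $\leftrightarrow$ the translation axioms, polar coefficients $\leftrightarrow$ the $n$-th products) is the right one and is consistent with Remark \ref{vertex_to_chiral}.

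The gap is where you locate it yourself: the equivalence between the operadic Lie axioms for $\mu$ (skew-symmetry and the chiral Jacobi identity) and locality/associativity for $Y$ is announced as the place ``where the real work resides'' but is not carried out, and this is precisely the nontrivial content of part (3) --- everything else is bookkeeping. The paper is explicit, in its OPE discussion, that the analogous step requires a careful manipulation of the Chevalley--Cousin complex, and the detailed proof in \cite{BDHK} exists largely to control this $X^3$ computation by operadic means. Until that identification is done --- unwinding the operadic composition through $\Delta^{(\pi)}_!$ and $j^{(\pi)}_*j^{(\pi)*}$ in coordinates, accounting for the $\delta$-supported terms along each partial diagonal, and showing the equality of composites reduces to the common-expansion statement in $V[[z,w]]_{z,w,z-w}$ --- you have a correct plan rather than a proof. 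One smaller imprecision: the creation property does not follow from the unit axiom and $\mathcal{D}$-linearity alone, since the unit axiom only constrains $\mu$ on $j_*j^*\,\omega_{\mathbb{A}^1}\boxtimes\mathcal{A}$; to control $Y(a,z)|0\rangle$ you must also invoke the skew-symmetry of $\mu$, so this verification belongs with the locality step rather than before it.
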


Since we are interested in vertex algebras which are universal envelopes of Lie conformal algebras, we will need to understand the geometric counterpart of the latter. For a given $\mathcal{A} \in \mathcal{M}^{r}(X)$, we define the \textit{$*$ operad} $P^{*}(\mathcal{A})$ as the operad whose $I$-operations for any $I \in fSet$ are given by the set
\begin{equation*}
P^{*}(\mathcal{A})_{I}=\text{Hom}_{\mathcal{M}^{r}(X^{I})}(\mathcal{A}^{\boxtimes I}, \Delta^{(I)}_{!}\mathcal{A}).
\end{equation*}
For a given $\pi: J \rightarrow I$ in $fSet$, the composition between $\gamma \in P^{*}(\mathcal{A})_{I}$ with $\{\delta_{i} \in P^{*}(\mathcal{A})_{J_{i}}\}_{i\in I}$ is the element of $P^{*}(\mathcal{A})_{J}$ defined by
\begin{equation*}
\mathcal{A}^{\boxtimes J} \xrightarrow{\boxtimes_{i\in I}\delta_{i}}\boxtimes_{i \in I}\Delta^{(J_{i})}_{!}\mathcal{A} = \Delta^{(\pi)}_{!}\mathcal{A}^{\boxtimes I} \xrightarrow{\Delta^{(\pi)}_{!}\gamma} \Delta^{(\pi)}_{!}\Delta^{(I)}_{!}\mathcal{A}=\Delta^{(J)}_{!}\mathcal{A}.
\end{equation*}

\begin{definition}
A \textit{Lie$^{*}$ algebra} over $X$ is a right $\mathcal{D}$-module $\mathcal{R}$ over $X$ together with a $*$ product
\begin{equation*}
\mu\_:\mathcal{R} \boxtimes \mathcal{R} \rightarrow \Delta_{!} \mathcal{R}
\end{equation*}
in $P^{*}(\mathcal{R})_{\{1,2\}}$, which defines a morphism of operads $\mathcal{L}ie \rightarrow P^{*}(\mathcal{R})$. In other words, it is a Lie algebra over the $*$ operad.
\end{definition}

Since for any $\mathcal{A}\in \mathcal{M}^{r}(X)$ and $I \in fSet$ we have an inclusion
\begin{equation*}
\mathcal{A}^{\boxtimes I} \hookrightarrow j^{(I)}_{*}j^{(I)*}\mathcal{A}^{\boxtimes I},
\end{equation*}
we can restrict any chiral operation in $P^{ch}(\mathcal{A})_{I}$ to a $*$ operation in $P^{*}(\mathcal{A})_{I}$. This defines a morphism of operads $P^{ch}(\mathcal{A}) \rightarrow P^{*}(\mathcal{A})$, which in turn allows us to define a functor from the category of chiral algebras to the category of Lie$^{*}$ algebras over $X$. Beilinson and Drinfeld have proved in \cite{BD1} that this functor admits a left adjoint, which assigns to any Lie$^{*}$ algebra $\mathcal{R}$ its \textit{universal enveloping chiral algebra} $\mathcal{U}(\mathcal{R})$. We have the following result in the same spirit as Thm. \ref{translation_equiv_chiral_are_vertex} relating Lie$^{*}$ algebras, Lie conformal algebras and their corresponding universal envelopes.

\begin{proposition}
If $(R,[\cdot_{\lam}\cdot])$ is a Lie conformal algebra, the following data defines a Lie$^{*}$ algebra over $\mathbb{A}^{1}$:
\begin{itemize}
    \item A right $\mathcal{D}_{\mathbb{A}^{1}}$-module $\mathcal{R}$ determined by $\Gamma(\mathbb{A}^{1},\mathcal{R})=R[x] dx$, where $\Del_{x}$ acts as $a\,x^i dx \cdot \partial_{x} = Ta \, x^i dx - a\, \partial_{x} x^i dx$.
    \item A $*$ product $\mu\_ : \mathcal{A} \boxtimes \mathcal{A} \rightarrow \Delta_{!} \mathcal{A}$ given in global coordinates by
    \begin{align*}
        R \otimes R [x,y] dx \wedge dy &\rightarrow R [x,y] [(x - y)^{-1}] dx \wedge dy\,/\, R[x,y] dx \wedge dy\\
        a \otimes b\, \alpha(x,y) &\mapsto [a_{(x-y)^{-1}}b]\, \alpha(x,y) \textup{ mod }R[x,y] dx \wedge dy
    \end{align*}
\end{itemize}
Moreover, the assignment $(R,[_{\lam}]) \mapsto (\mathcal{R},\mu\_)$ defines an equivalence between the category of Lie conformal algebras and the category of $\mathbb{G}_{a}$-equivariant Lie$^{*}$ algebras over $\mathbb{A}^{1}$, and $\mathcal{U}(\mathcal{R})$ is the chiral algebra associated to the vertex algebra $\mathcal{U}(R)$.
\end{proposition}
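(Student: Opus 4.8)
The plan is to follow the proof of Theorem~\ref{translation_equiv_chiral_are_vertex} closely, using that the $*$ operad $P^{*}(\mathcal{R})$ is the pole-free analogue of the chiral operad: the source $\mathcal{R}\boxtimes\mathcal{R}$ carries no poles along the diagonal, so only the non-negative products of $R$ enter, matching the fact that a Lie conformal algebra remembers its $n$-th products only for $n\geq0$.

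First I would check that the stated data is a Lie$^{*}$ algebra. Well-definedness of $\mu\_$ is immediate: since $[a_{\lam}b]\in R[\lam]$ is a polynomial, substituting $\lam=(x-y)^{-1}$ and multiplying by $\alpha(x,y)$ yields a section of $R[x,y][(x-y)^{-1}]\,dx\wedge dy$ whose class modulo $R[x,y]\,dx\wedge dy$ is a well-defined element of $\Delta_{!}\mathcal{R}$. That $\mu\_$ is a morphism of right $\D_{X^{2}}$-modules is precisely the content of $T$-sesquilinearity: the relations $[Ta_{\lam}b]=-\lam[a_{\lam}b]$ and $[a_{\lam}Tb]=(T+\lam)[a_{\lam}b]$ translate, via the right $\D$-action $a\,x^{i}dx\cdot\Del_{x}=Ta\,x^{i}dx-a\,\Del_{x}(x^{i})\,dx$, into compatibility of $\mu\_$ with $\Del_{x}$ and $\Del_{y}$. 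Skew-symmetry of $\mu\_$ under the transposition of the two copies of $X$ is the geometric incarnation of the antisymmetry axiom $[a_{\lam}b]=-[b_{-\lam-T}a]$: the swap sends $(x-y)^{-1}$ to $-(x-y)^{-1}$, while the residual shift by $-T$ is absorbed by the $\D$-module structure of $\Delta_{!}\mathcal{R}$, where $T$ acts as the derivative transverse to the diagonal. Finally, the Jacobi identity for $\mu\_$ as a morphism $\mathcal{L}ie\to P^{*}(\mathcal{R})$ is obtained by transcribing the Jacobi identity of the $\lam$-bracket after composing operations along the surjection $\{1,2,3\}\to\{1,2\}$, exactly as in the chiral case but without pole contributions.

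For the equivalence I would first invoke part~(2) of Theorem~\ref{translation_equiv_chiral_are_vertex}, which already identifies the underlying differential vector space $(R,T)$ with the translation equivariant right $\D$-module $\widetilde{R[x]\,dx}$; what remains is to match the extra structure. The key step is to classify $\mathbb{G}_{a}$-equivariant $*$-operations: such an element of $P^{*}(\mathcal{R})_{\{1,2\}}$ is determined by equivariance from its value on the constant sections $a\otimes b$, and that value, being a section of $\Delta_{!}\mathcal{R}$ supported on the diagonal, is uniquely a finite sum of a delta-function and its transverse derivatives, hence encodes a finite family of bilinear maps $R\otimes R\to R$. These are exactly the $n$-th products $[a_{(n)}b]$, $n\geq0$, assembled into the $\lam$-bracket, and conversely every $\lam$-bracket produces such an operation; thus the assignment is a bijection on operations. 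Together with the previous paragraph, which shows the Lie$^{*}$ axioms correspond exactly to the antisymmetry and Jacobi axioms, this gives essential surjectivity, while full faithfulness on morphisms is clear because a morphism of $\mathbb{G}_{a}$-equivariant Lie$^{*}$ algebras is a $\D$-module map intertwining the $*$-products, which under the dictionary is precisely a $\C[T]$-linear map preserving the $\lam$-bracket.

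For the statement on universal envelopes, rather than computing $\U(\mathcal{R})$ explicitly I would argue by uniqueness of adjoints. Both $R\mapsto\U(R)$ and $\mathcal{R}\mapsto\U(\mathcal{R})$ are by definition left adjoint to the respective forgetful functors, from vertex algebras to Lie conformal algebras and from chiral algebras to Lie$^{*}$ algebras. The restriction functor $P^{ch}(\mathcal{A})\to P^{*}(\mathcal{A})$ carries the chiral product $a\otimes b\,\alpha\mapsto Y(a,x-y)b\,\alpha$ of the chiral algebra attached to a vertex algebra $V$ (Remark~\ref{vertex_to_chiral}) to the $*$-product determined by the non-negative products $a_{(n)}b$, $n\geq0$, i.e.\ to the Lie$^{*}$ algebra attached to the underlying Lie conformal algebra of $V$; hence the square formed by the two equivalences and the two forgetful functors commutes. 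Since left adjoints are unique up to canonical isomorphism, transporting $\U$ across these equivalences identifies $\U(\mathcal{R})$ with the chiral algebra associated to $\U(R)$. The main obstacle I anticipate is the classification of $\mathbb{G}_{a}$-equivariant $*$-operations together with the bookkeeping of the $\D$-module structure in the antisymmetry relation---in particular verifying that the $-T$ shift in $[b_{-\lam-T}a]$ is accounted for precisely by the transverse derivative on $\Delta_{!}\mathcal{R}$---since this is where the geometry and the divided-power conventions of the $\lam$-bracket interact most delicately.
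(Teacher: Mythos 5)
The paper never proves this proposition: it appears in the Preliminaries as part of the survey of \cite{BD1} (in the same spirit as Theorem \ref{translation_equiv_chiral_are_vertex}, whose proof is likewise deferred to \cite{BD1} and \cite{BDHK}), so there is no in-paper argument to compare against and your proposal must stand on its own. In outline it does: the dictionary you set up --- sesquilinearity $\leftrightarrow$ compatibility with the right $\mathcal{D}$-module structure, antisymmetry $\leftrightarrow$ skew-symmetry with the $-T$ shift absorbed by transverse derivatives on $\Delta_{!}\mathcal{R}$, conformal Jacobi $\leftrightarrow$ the operad morphism $\mathcal{L}ie \to P^{*}(\mathcal{R})$ on ternary operations, classification of $\mathbb{G}_a$-equivariant $*$-operations by finite sums of delta functions along the diagonal, and transport of left adjoints across the two equivalences --- is exactly the standard argument.

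Two points need repair. First, your well-definedness paragraph misreads the formula: $[a_{(x-y)^{-1}}b]$ is \emph{not} the literal substitution $\lam=(x-y)^{-1}$ into $[a_{\lam}b]=\sum_{n\geq 0}\frac{\lam^{n}}{n!}[a_{(n)}b]$. Under that reading the $n=0$ term $[a_{(0)}b]$ is regular along the diagonal and dies in the quotient $R[x,y][(x-y)^{-1}]/R[x,y]$, so the zeroth product would be lost, skew-symmetry would break, and the functor could not be faithful, let alone an equivalence. The correct dictionary sends $\frac{\lam^{n}}{n!}$ to the class of $(x-y)^{-n-1}$, i.e.\ $[a_{(n)}b]$ pairs with the $n$-th transverse derivative of the delta function; this is forced by consistency with the chiral product $Y(a,x-y)b$ of Remark \ref{vertex_to_chiral}, whose polar part is $\sum_{n\geq 0}a_{(n)}b\,(x-y)^{-n-1}$, and it is the convention your later delta-function classification implicitly uses. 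Second, the adjoint-uniqueness argument is incomplete as stated: the chiral envelope $\mathcal{U}$ is left adjoint to the forgetful functor between the \emph{full} categories of chiral and Lie$^{*}$ algebras over $\mathbb{A}^1$, whereas the functor you transport across the equivalences is left adjoint to the forgetful functor between the $\mathbb{G}_a$-equivariant subcategories. To identify the two you still need that $\mathcal{U}(\mathcal{R})$ of an equivariant Lie$^{*}$ algebra carries a canonical equivariant structure (functoriality of $\mathcal{U}$ together with its compatibility with pullback along $p,a:\mathbb{G}_a\times\mathbb{A}^1\to\mathbb{A}^1$) and that the adjunction bijection restricts to equivariant morphisms on both sides. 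These checks are routine but they are precisely what licenses the phrase ``uniqueness of adjoints,'' so they should be stated.
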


\subsection{Factorization algebras and monoids}

We will now delve further into the geometrical aspects behind vertex algebras. In \cite[Sect.~3.4]{BD1}, the authors introduced the notion of factorization algebra, which encodes all the information underlying a vertex or chiral algebra into factorization properties for a family of $\mathcal{O}$-modules over the powers of $X$. We shall make use of the notation defined in (\ref{def_Delta_pi}) and (\ref{def_U_pi}).

\begin{definition}
A \textit{non-unital factorization algebra} over $X$ is a family $\{ \mathcal{V}_{I} \}$ where $\mathcal{V}_{I}$ is an $\mathcal{O}$-module over $X^I$ for each $I \in fSet$, together with
\begin{itemize}
    \item A \textit{diagonal isomorphism} of $\mathcal{O}$-modules over $X^I$  $$\nu^{(\pi)}: \Delta^{(\pi)*}\mathcal{V}_{J} \rightarrow \mathcal{V}_{I}$$
    \item A \textit{factorization isomorphism} of $\mathcal{O}$-modules over $U^{(\pi)}$
    \begin{align*}
    \displaystyle c_{[\pi]}: j^{(\pi)*}  \boxtimes_{i \in I} \mathcal{V}_{J_{i}}  \rightarrow j^{(\pi)*} \mathcal{V}_{J}    
    \end{align*}
\end{itemize}
for each surjection $\pi: J \rightarrow I$ in $fSet$. We require that the $\mathcal{O}$-modules $\mathcal{V}_{I}$ have no nonzero local sections supported on the union of all diagonals in $X^{I}$, and also the following compatibilities with respect to a composition of surjections $K \xrightarrow{\pi'} J \xrightarrow{\pi} I$:
\begin{enumerate}[label=(\roman*)]
    \item $\nu^{(\pi\pi')}=\nu^{(\pi)}\Delta^{(\pi)*}(\nu^{(\pi')})$.
    \item $c_{[\pi']}=c_{[\pi\pi']} j^{(\pi')*} (\boxtimes_{i \in I} c_{[\pi'_{i}: K_{i} \rightarrow J_{i}]})$.
    \item $\nu^{(\pi')} \Delta^{(\pi')*}(c_{[\pi \pi']}) = c_{[\pi]} (\boxtimes_{i \in I} \nu^{(\pi'_{i}: K_{i} \rightarrow J_{i})})$.
\end{enumerate}
\end{definition}

Given a non-unital factorization algebra $\{\mathcal{V}_{I}\}$, we will denote $\mathcal{V}=\mathcal{V}_{\{1\}}$, $\mathcal{V}_2 = \mathcal{V}_{\{1,2\}}$, $\nu=\nu^{[\{1,2\} \rightarrow \{1\}]}: \Delta^{*}\mathcal{V}_2 \rightarrow \mathcal{V}$ and $c=c^{[id_{\{1,2\}]}]}: j^{*} \mathcal{V} \boxtimes \mathcal{V} \rightarrow j^{*} \mathcal{V}_2$.

\begin{definition}
A \textit{factorization algebra} over $X$ is a non-unital factorization algebra $\{\mathcal{V}_{I}\}$ over $X$ together with a \textit{unit}, which is a global section $\mathds{1} \in \mathcal{V}$ such that for any local section $f \in \Gamma(W, \mathcal{V})$, $W$ open in $X$, the section $\mathds{1} \boxtimes f$ of $j^{*}\mathcal{V}_2$ defined using $c$ extends across the diagonal, and its restriction is $\Delta^{*}(\mathds{1} \boxtimes f) = f$. Note that if the unit exists, then it is uniquely defined.
\end{definition}

Under the presence of a unit, the $\mathcal{O}_{X^I}$-modules $\mathcal{V}_I$ admit a left $\mathcal{D}_{X^I}$-module structure compatible with the factorization structure and such that the unit section is horizontal (i.e., $\mathds{1}:\mathcal{O}_X \rightarrow \mathcal{V}$ is a map of $\mathcal{D}$-modules). This is analogous to the fact that the operator $T$ of a vertex algebra $V$ is uniquely determined by $|0\ran$ and $Y$, since $Ta = \partial_z Y(a,z) |0\ran \big|_{z=0}$ for all $a \in V$.

The importance of factorization algebras lies in the following result, whose proof can be found in \cite[Sect.~3.4]{BD1}.

\begin{theorem}\label{equivalence_factorization_chiral}
The category $\mathcal{FA}(X)$ of factorization algebras is equivalent to the category $\mathcal{CA}(X)$ of chiral algebras over $X$.
\end{theorem}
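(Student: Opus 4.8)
The plan is to exhibit functors in both directions and show they are mutually quasi-inverse, with essentially all of the content living in the reconstruction of a factorization algebra from a chiral algebra. In the direction $\Phi\colon\mathcal{FA}(X)\to\mathcal{CA}(X)$ I would proceed concretely. Given a factorization algebra $\{\mathcal{V}_I\}$, set $\mathcal{V}=\mathcal{V}_{\{1\}}$; as remarked after the definition of factorization algebra, the unit $\mathds{1}$ endows $\mathcal{V}$ with a canonical left $\mathcal{D}_X$-module structure, so I pass to the right $\mathcal{D}$-module $\mathcal{A}=\mathcal{V}\otimes_{\mathcal{O}_X}\omega_X$. The chiral product is then assembled from the two pieces of data on $X^2$: applying $j_*j^*$ to the factorization isomorphism $c$ identifies $j_*j^*\mathcal{A}\boxtimes\mathcal{A}$ with $j_*j^*\mathcal{A}_2$, where $\mathcal{A}_2=\mathcal{V}_2^r$, and the canonical decomposition of $\mathcal{A}_2$ along the diagonal supplies a surjection onto its diagonal part, which the diagonal isomorphism $\nu$ (equivalently $\Delta^{!}\mathcal{A}_2\cong\mathcal{A}$) identifies with $\Delta_!\mathcal{A}$. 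Composing yields $\mu\colon j_*j^*\mathcal{A}\boxtimes\mathcal{A}\to\Delta_!\mathcal{A}$.

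I would then check the chiral axioms for $\mu$. Skew-symmetry should follow from the $S_2$-equivariance (commutativity) built into the factorization isomorphism $c$, and the unit condition for $\mu$ is exactly the unit condition in the factorization algebra. The Jacobi identity should follow from the associativity-type compatibility (iii), applied to a triple surjection and realized on $X^3$ through $\mathcal{V}_3$ together with its factorization and diagonal isomorphisms; concretely, the two ways of degenerating $\mathcal{V}_3$ to the principal diagonal correspond to the two nested chiral brackets, and (iii) matches them.

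Conversely, for $\Psi\colon\mathcal{CA}(X)\to\mathcal{FA}(X)$ I must reconstruct the entire tower $\{\mathcal{V}_I\}$ from $(\mathcal{A},\mu,u)$. On a single point $\mathcal{V}=\mathcal{A}^l$. For $|I|=2$ the module $\mathcal{A}_2$ is genuinely nontrivial: it agrees with $\mathcal{A}\boxtimes\mathcal{A}$ away from the diagonal but collapses to $\mathcal{A}$ along it, and the gluing is precisely what $\mu$ records, since $\mu$ prescribes how sections of $\mathcal{A}\boxtimes\mathcal{A}$ with poles on the diagonal restrict there. I would therefore characterize $\mathcal{A}_2$ by $j^{*}\mathcal{A}_2\cong j^{*}\mathcal{A}\boxtimes\mathcal{A}$ and $\Delta^{!}\mathcal{A}_2\cong\mathcal{A}$ with extension data determined by the chiral product, and then define $\mathcal{V}_I$ for general $I$ by iterating this two-point fusion along the strata of $X^I$, organizing the higher operations through the chiral operad $P^{ch}(\mathcal{A})$. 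The unit $u$ is what guarantees the required vanishing of local sections supported on the diagonals.

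The principal obstacle is showing that this reconstruction is well defined and satisfies compatibilities (i)–(iii) for every composition $K\xrightarrow{\pi'}J\xrightarrow{\pi}I$. This is exactly where the ``Lie algebra in the chiral operad'' structure must do the work: compatibility (iii) for all triple compositions should be equivalent to the Jacobi identity for $\mu$, so that the binary chiral product, the unit, and skew-symmetry already determine all higher factorization data consistently — in the same way a commutative algebra is determined by its binary product and associativity. Finally I would verify $\Phi\Psi\cong\mathrm{id}$ and $\Psi\Phi\cong\mathrm{id}$; the round trip on the one-point sheaf and the binary product is the identity by construction, and uniqueness of the diagonal decomposition together with the universal property characterizing $\mathcal{A}_2$ (hence each $\mathcal{V}_I$) forces the natural isomorphisms. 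I expect the genuine difficulty to reside entirely in constructing $\Psi$ and in identifying compatibility (iii) with Jacobi; the forward functor and the verification $\Phi\Psi\cong\mathrm{id}$ are comparatively formal.
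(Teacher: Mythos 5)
Your forward functor $\Phi$ is essentially the paper's construction: one takes the canonical $\mathcal{D}$-module decomposition of $\mathcal{V}_2$ along $\Delta$, rewrites its outer terms via $c$ and $\nu$, and uses the hypothesis that $\mathcal{V}_2$ has no sections supported on the diagonal to obtain the exact sequence $0 \rightarrow \mathcal{V}_2 \rightarrow j_*j^*\mathcal{V}\boxtimes\mathcal{V} \xrightarrow{\mathcal{Y}^2} \Delta_!\mathcal{V} \rightarrow 0$, with $\mu$ the twist of $\mathcal{Y}^2$ by $\omega_{X^2}$; skew-symmetry from the symmetry of the factorization data and the matching of units are as you say. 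Likewise $\mathcal{V}=\mathcal{A}^l$ and $\mathcal{V}_2=\text{Ker}\,\mathcal{Y}^2$ agree with the paper. (One secondary discrepancy: the paper derives the Jacobi identity of $\mu$ from the Cousin complex of $\omega_{X^3}$, i.e.\ from $\omega_X$ itself being a chiral algebra, rather than directly from compatibility (iii); your route via two degenerations of $\mathcal{V}_3$ plus skew-symmetry is plausible in spirit but is stated as an expectation, not carried out.)

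The genuine gap is in $\Psi$ for $|I|\geq 3$. ``Iterating the two-point fusion along the strata of $X^I$'' is not a construction: if you fuse points in some order, you must show the result is independent of that order and comes equipped with diagonal and factorization isomorphisms satisfying (i)--(iii), and this well-definedness is precisely the hard content of the theorem. Saying that the ``Lie algebra in the chiral operad'' structure ``must do the work'' names the difficulty without resolving it. The paper (following Beilinson--Drinfeld) supplies a specific device that your plan never produces: the Chevalley-Cousin complex $C(\mathcal{A})_{X^I}$, whose terms are $\Delta^{(\pi)}_!j^{(T)}_*j^{(T)*}(\mathcal{A}[1])^{\boxtimes T}$ and whose differentials are built by repeated application of $\mu$ (here $d^2=0$ is exactly where skew-symmetry and Jacobi enter). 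One then defines $\mathcal{V}_I=\text{Ker}\,(d^{-|I|})$ in a single stroke, and the nontrivial theorem that this complex is acyclic outside degree $-|I|$ is what simultaneously produces the isomorphisms $\nu^{(\pi)}$, $c_{[\pi]}$ and their compatibilities for all compositions of surjections. Without this mechanism --- or an equivalent substitute, such as the descent-theoretic gluing via the Beauville--Laszlo lemma that the paper itself develops in Section \ref{OPE_algebras_are_gluing_data} for $X=\mathbb{A}^1$ --- your outline of $\Psi$ stops exactly at the step where the proof actually lives, so the proposal as written does not close.
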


We shall not review the proof in its entirety, but it will be enlightening to sketch the construction of the functors between these categories. 

If we start with a factorization algebra $\{ \mathcal{V}_I \}$, we may consider the canonical $\mathcal{D}$-module decomposition of $\mathcal{V}_2$ with respect to $\Delta$ and $U$, which is the following exact triangle in the derived category $D^b\mathcal{M}^l(X^2)$ $$\Delta_!\Delta^! \mathcal{V}_2 \rightarrow \mathcal{V}_2 \rightarrow j_* j^* \mathcal{V}_2 \rightarrow \Delta_!\Delta^! \mathcal{V}_2 [1].$$

In $\mathcal{M}^l(X^2)$, we may use $c$ and $\nu$ to compute the last two terms in terms of $\mathcal{V}$, and taking into account that $\mathcal{V}_2$ must have no sections supported on the diagonal, we end up with the following exact sequence
$$0 \rightarrow \mathcal{V}_2 \rightarrow j_* j^* \mathcal{V} \boxtimes \mathcal{V} \xrightarrow{\mathcal{Y}^2} \Delta_! \mathcal{V} \rightarrow 0.$$

If we take the right $\mathcal{D}$-module $\mathcal{A}=\mathcal{V}^r$, then we get a chiral product $\mu$ for $\mathcal{A}$ by tensoring $\mathcal{Y}^2$ with $\omega_{X^2}$. The section $u=\mathds{1} \otimes \omega_{X}$ is a unit for $\mathcal{A}$, and thus we obtain a chiral algebra $(\mathcal{A},\mu,u)$ over $X$. Its skewsimmetry and Jacobi identity follow from those of $\omega_X$, since it is a chiral algebra. We have not provided a full proof of this fact, but it is important to remark that the Jacobi identity stems from a detailed study of the Cousin complex of $\omega_{X^3}$ with respect to the diagonal decomposition of $X^3$ (see \cite[Thm.~3.1.5]{BD1}).

Now suppose we are given a chiral algebra $(\mathcal{A},\mu,u)$. It is clear from the above construction that we may recover the first two sheaves of $\{ \mathcal{V}_I \}$ as $\mathcal{V}= \mathcal{A}^l = \mathcal{A} \otimes_{\mathcal{O}_{X}} \omega_X^{-1}$ and $\mathcal{V}_2 = \text{Ker }\mathcal{Y}^2$, where $\mathcal{Y}^2=\mu \otimes \omega_{X^2}^{-1}$. This may be generalized by considering the \textit{Chevalley-Cousin complex} of $\mathcal{A}$, which is the complex of families (or family of complexes) of right $\mathcal{D}_{X^I}$-modules given by $$C(\mathcal{A})_{X^I}^{n}=\bigoplus_{\substack{\pi: I \twoheadrightarrow T \\ |T|=|I|+n}} \Delta_!^{(\pi)}j^{(T)}_* j^{(T)*} (\mathcal{A}[1])^{\boxtimes T},$$
where the differentials are defined, roughly, through repeated applications of $\mu$ (see \cite{BD1} for details, and Ex. \ref{chiral_to_fact_over_A1} below). These complexes turn out to be acyclic everywhere except when $n=-|I|$, so the only relevant cohomological data comes from $$\mathcal{V}_{I}=\text{Ker }(d^{-|I|}:C(\mathcal{A})_{X^I}^{-|I|} \rightarrow C(\mathcal{A})_{X^I}^{-|I|+1})^l.$$

These left $\mathcal{D}_{X^I}$-modules form a factorization algebra over $X$, where the required isomorphisms $\nu^{(\pi)}$ and $c_{[\pi]}$ are consequences of certain factorization properties of these complexes, and the unit section is $\mathds{1} = u^l$.

\begin{example}\label{chiral_to_fact_over_A1}
Let $X=\mathbb{A}^1$ and let us assume that $(\mathcal{A},\mu,u)$ is translation equivariant, so that $\Gamma(\mathbb{A}^1,\mathcal{A})=V[x] dx$ for a vertex algebra $V$. Then the first two sheaves of its associated factorization algebra $\{\mathcal{V}_I\}$ are determined by $\Gamma(\mathbb{A}^1,\mathcal{V})=V[x]$ and $\Gamma(\mathbb{A}^2,\mathcal{V}_2)=V_2=\text{Ker }Y^2$, where $Y^2=\Gamma(\mathbb{A}^2,\mu^{l})$. Similarly, for all $n \geq 2$ we have $\Gamma(\mathbb{A}^n,\mathcal{V}_{\{1,...,n\}})=V_n=\text{Ker }Y^n$, where
\begin{align*}
    Y^n &: V^{\otimes n}[x_i]_{1\leq i \leq n}[(x_i - x_{j})^{-1}]_{1\leq i < j \leq n} \rightarrow \frac{V [x_i]_{1\leq i \leq n}[(x_i - x_{j})^{-1}]_{1\leq i < j \leq n} }{V[x_i]_{1\leq i \leq n}}, \\
    &a_1 \otimes \cdots \otimes a_n\, f(x_1,...,x_n) \mapsto Y(\cdots Y(a_1,x_1 - x_2)a_2 \cdots,x_{n-1} - x_{n})a_n \cdot \\
    &\hspace{6cm} f(x_1,...,x_n) \text{ mod }V[x_i]_{1\leq i \leq n}.
\end{align*}
\end{example}

Using Theorems \ref{translation_equiv_chiral_are_vertex} and \ref{equivalence_factorization_chiral} one can show that the factorization algebras described in this example exhaust all \textit{translation equivariant factorization algebras over }$\mathbb{A}^1$, which are defined similarly to other $\mathbb{G}_a$-equivariant objects: if $a^{I},p^{I}: (\mathbb{G}_a \times \mathbb{A}^{1})^{|I|} \rightarrow \mathbb{A}^{|I|}$ are the morphisms naturally deduced from $a$ and $p$, then we require that there exist isomorphisms $\psi_{I}: p^{I*}\mathcal{V}_{I} \rightarrow a^{I*}\mathcal{V}_{I}$ preserving the factorization structures and satisfying natural compatibilities.

One of the main advantages of factorization algebras is the fact that one may generalize their definition in order for the sheaves $\mathcal{V}_{I}$ to live in categories other than that of $\mathcal{O}_{X^I}$-modules, even non-additive categories. For any scheme $Z$, the non-linear analogue of a quasicoherent sheaf of $\mathcal{O}_Z$-modules is an \textit{ind-scheme} $\mathcal{G}_Z$ over $Z$, that is, a formal colimit $\varinjlim \mathcal{G}_Z^{(n)}$ of an inductive system of quasi-compact schemes $\mathcal{G}_Z^{(n)}$ over $Z$, such that all the maps $\mathcal{G}_Z^{(n)} \rightarrow \mathcal{G}_Z^{(m)}$ for $n \leq m$ are closed embeddings. We shall always assume the set of indices to be countable, so that the morphisms $\mathcal{G}_Z^{(n)} \xrightarrow{r^{(n)}} Z$ define a morphism $\mathcal{G}_Z \xrightarrow{r} Z$, and our ind-schemes will always be formally smooth (see \cite[Sect.~7.11]{BD2}). 

This analogy motivates the following definition.

\begin{definition}
A \textit{factorization space} over $X$ is a family $\{ \mathcal{G}_{I} \}$ where $\mathcal{G}_{I}$ is an ind-scheme over $X^I$ for each $I \in fSet$, together with
\begin{itemize}
    \item A \textit{diagonal isomorphism} of ind-schemes over $X^I$ $$\nu^{(\pi)}: \Delta^{(\pi)*}\mathcal{G}_{J} \rightarrow \mathcal{G}_{I}$$
    \item A \textit{factorization isomorphism} of ind-schemes over $U^{(\pi)}$
    \begin{align*}
    \displaystyle c_{[\pi]}: j^{(\pi)*} \prod_{i \in I} \mathcal{G}_{J_{i}} \rightarrow j^{(\pi)*} \mathcal{G}_{J}    
    \end{align*}
\end{itemize}
for each surjection $\pi: J \rightarrow I$, satisfying analogous compatibilities with respect to composition of surjections as factorization algebras.
\end{definition}

Of course, we may also define units for these factorization spaces.

\begin{definition}
A \textit{factorization monoid} over $X$ is a factorization space $\{\mathcal{G}_{I}\}$ over $X$ together with a \textit{unit}, which is a collection of global sections $\mathds{1}^{(I)} : X^I \rightarrow \mathcal{G}_{I}$ compatible with the diagonal and factorization isomorphims, such that for any local section $s: U \rightarrow \mathcal{G}_{U,1}$ of the projection $\mathcal{G}_{U,1} \rightarrow U \subseteq X$, with $U$ open in $X$, the section $s \times \mathds{1}^{(1)}$ of $j^{*}\mathcal{G}_{U,2}$ defined using $c_{[id_{\{1,2\}}] }$ extends to all of $X^2$, and its restriction to $U$ via $\Delta$ is $s$.
\end{definition}

The relationship between factorization monoids and factorization algebras is not merely an analogy, we can apply the following ``linearization process'' in order to transform the former into the latter: given a factorization monoid $\{ \mathcal{G}_I\}$ over $X$, with projection maps $r^{(I)}: \mathcal{G}_{I} \rightarrow X^I$ and unit $\mathds{1}^{(I)}: X^I \rightarrow \mathcal{G}_{I}$, we define
\begin{equation}\label{linearization_functor}
\mathcal{A}_{\mathcal{G},I}=r_{*}^{(I)} \mathds{1}_{!}^{(I)}\omega_{X^I}.
\end{equation}

These may be interpreted as the sheaves of delta-functions on $\mathcal{G}_I$ along the section $\mathds{1}^{(I)}$ (see \cite[Sect.~20.4.1]{FBZ}).

\begin{proposition}
The $\mathcal{O}_{X^{I}}$-modules $\mathcal{A}_{\mathcal{G},I}^l$ form a factorization algebra, called the \textup{linearization} of the factorization monoid $\{\mathcal{G}_{I}\}$.
\end{proposition}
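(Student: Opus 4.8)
The plan is to transport the geometric structure maps $\nu^{(\pi)}$ and $c_{[\pi]}$ of the monoid $\{\mathcal{G}_{I}\}$ to the sheaves $\mathcal{A}_{\mathcal{G},I}^{l}$ by treating the linearization $\mathcal{G}_{I} \mapsto \mathcal{A}_{\mathcal{G},I}=r_{*}^{(I)}\mathds{1}_{!}^{(I)}\omega_{X^{I}}$ as a functor and checking that it carries isomorphisms of pointed ind-schemes over $X^{I}$ to isomorphisms of $\mathcal{D}$-modules. The two properties of this functor that make everything work are \emph{base change} along the diagonal embeddings and a \emph{Künneth formula} for external products; once these are in place, the factorization axioms for $\{\mathcal{A}_{\mathcal{G},I}^{l}\}$ become formal consequences of the corresponding axioms for $\{\mathcal{G}_{I}\}$.

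For the diagonal isomorphism I would start from the Cartesian square that defines $\Delta^{(\pi)*}\mathcal{G}_{J}$, whose top arrow sits over $\Delta^{(\pi)}:X^{I}\to X^{J}$. The compatibility of the monoid unit with $\nu^{(\pi)}$ says precisely that, under the identification $\nu^{(\pi)}:\Delta^{(\pi)*}\mathcal{G}_{J}\cong \mathcal{G}_{I}$, the base change of the section $\mathds{1}^{(J)}$ becomes $\mathds{1}^{(I)}$ and the base change of $r^{(J)}$ becomes $r^{(I)}$. Because $\mathds{1}_{!}^{(J)}\omega_{X^{J}}$ is supported along a section of $r^{(J)}$, so that the relevant morphisms restrict to isomorphisms on supports, base change for the $!$-pushforward $\mathds{1}_{!}^{(J)}$ and for $r_{*}^{(J)}$ produces a canonical isomorphism $\Delta^{(\pi)!}\mathcal{A}_{\mathcal{G},J}\cong \mathcal{A}_{\mathcal{G},I}$ of $\mathcal{D}_{X^{I}}$-modules, where I use the $\mathcal{D}$-module pullback $\Delta^{(\pi)!}$ for which base change is clean and the canonical twists of $\Delta^{(\pi)}$ are automatically absorbed. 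To recover the $\mathcal{O}$-module isomorphism $\tilde{\nu}^{(\pi)}:\Delta^{(\pi)*}\mathcal{A}_{\mathcal{G},J}^{l}\to \mathcal{A}_{\mathcal{G},I}^{l}$ demanded by the definition I would invoke formal smoothness of the ind-schemes $\mathcal{G}_{I}$: it makes each $\mathcal{A}_{\mathcal{G},I}^{l}$ locally free over $\mathcal{O}_{X^{I}}$ (fibrewise a space of distributions with a PBW-type basis), hence flat, so that $\Delta^{(\pi)!}$ and $\Delta^{(\pi)*}$ agree after the left--right conversion and no higher Tor contributes. The same local freeness immediately gives the required absence of nonzero sections supported on the diagonals, since a flat sheaf over the regular scheme $X^{I}$ is torsion-free.

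The factorization isomorphism I would obtain similarly, replacing base change by the Künneth property. Since $\omega_{X^{J}}=\boxtimes_{i\in I}\omega_{X^{J_{i}}}$ and both $r_{*}$ and the $!$-pushforward commute with external products, the delta-sheaf of the product monoid $\prod_{i}\mathcal{G}_{J_{i}}$ along its product unit is $\mathcal{A}_{\prod_{i}\mathcal{G}_{J_{i}}}\cong \boxtimes_{i\in I}\mathcal{A}_{\mathcal{G},J_{i}}$; restricting to $U^{(\pi)}$ and applying the functor to the geometric isomorphism $c_{[\pi]}$ (again via base change over $U^{(\pi)}$) yields $\tilde{c}_{[\pi]}$. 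The unit of the factorization algebra is the canonical generator of the delta-module $\mathds{1}_{!}^{(1)}\omega_{X}$ pushed to $X$, and its defining extension-across-the-diagonal property translates directly into the monoid unit axiom under $\tilde{c}_{[\mathrm{id}_{\{1,2\}}]}$. Finally, the three cocycle compatibilities (i)--(iii) for $\tilde{\nu}^{(\pi)}$ and $\tilde{c}_{[\pi]}$ follow by functoriality from the corresponding compatibilities of $\nu^{(\pi)}$ and $c_{[\pi]}$, since all the maps in question are images under the linearization functor of geometric isomorphisms that already satisfy those identities.

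The step I expect to be the main obstacle is establishing the base-change and Künneth isomorphisms rigorously in the ind-scheme setting while keeping track of the $\omega$-twists: one must argue that $r_{*}^{(I)}\mathds{1}_{!}^{(I)}$ is already computed at a finite stage $\mathcal{G}_{I}^{(n)}$ of the inductive system, that base change genuinely holds there (this is where formal smoothness and the section structure are used, guaranteeing the flatness needed for non-derived base change), and that the left--right conversion $(-)^{l}$ reconciles the $\mathcal{D}$-module pullback $\Delta^{(\pi)!}$ arising naturally from base change with the $\mathcal{O}$-module pullback $\Delta^{(\pi)*}$ appearing in the definition of a factorization algebra. Once the flatness of $\mathcal{A}_{\mathcal{G},I}^{l}$ over $\mathcal{O}_{X^{I}}$ is secured, these two notions of pullback coincide and the remaining verifications are bookkeeping.
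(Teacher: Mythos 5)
The paper itself contains no proof of this proposition: it is recalled as background, with the construction and the delta-function interpretation attributed to \cite[Sect.~3.4]{BD1} and \cite[Sect.~20.4.1]{FBZ}, so there is no in-paper argument to compare yours against. That said, your proposal is a correct reconstruction of the standard argument from those sources: the assignment $\mathcal{G}_I \mapsto r^{(I)}_{*}\mathds{1}^{(I)}_{!}\omega_{X^I}$ is functorial for isomorphisms of pointed ind-schemes over $X^I$, commutes with base change along the diagonal embeddings, and satisfies a K\"unneth formula for external products; the factorization axioms for $\{\mathcal{A}_{\mathcal{G},I}^{l}\}$ then follow formally from those of $\{\mathcal{G}_I\}$, and your bookkeeping reconciling $\Delta^{(\pi)!}$ on right $\mathcal{D}$-modules with $\Delta^{(\pi)*}$ on $\mathcal{O}$-modules after applying $(-)^{l}$ (the relative canonical twist cancels against the $\omega^{-1}$-twists) is exactly right.

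Two technical points in your last paragraph deserve correction, though neither breaks the argument. First, $r^{(I)}_{*}\mathds{1}^{(I)}_{!}\omega_{X^I}$ is \emph{not} computed at a single finite stage $\mathcal{G}_I^{(n)}$: a distribution supported along the unit section sees the formal completion of the whole ind-scheme, so the sheaf is the filtered colimit, over the stages $n$ and over the order of the distributions, of the duals of the infinitesimal neighborhoods of $\mathds{1}^{(I)}$ inside $\mathcal{G}_I^{(n)}$. Only each individual section lives at a finite stage. This is harmless precisely because base change and K\"unneth commute with filtered colimits. Second, formal smoothness gives that each finite-order piece of this colimit is finite locally free over $\mathcal{O}_{X^I}$, hence that the colimit is \emph{flat}; but a filtered colimit of locally free sheaves need not itself be locally free, so flatness is the property you should carry through the argument rather than local freeness. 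Flatness is all you actually use: flat over the integral smooth base $X^I$ implies torsion-free, which gives the required absence of nonzero sections supported on the union of the diagonals.
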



\subsection{OPE algebras}
Beilinson and Drinfeld define in \cite[Sect.~3.5]{BD1} yet another object equivalent to both chiral and factorization algebras: the OPE algebras. They, amongst all of these concepts, are the closest in spirit to vertex algebras, and therefore they will be of utmost importance to us.

For any $I \in fSet$, a \textit{$\mathcal{D}_{X^I}$-sheaf} is a (not necessarily quasi-coherent) sheaf of left $\mathcal{D}_{X^I}$-modules for the \'etale topology. Let $\bar{\mathcal{M}}^l(X^I)$ be the category of $\mathcal{D}_{X^I}$-sheaves. For the diagonal embedding $\Delta^{(\pi)}: X^I \rightarrow X^J$ determined by a surjection $\pi: J \rightarrow I$, we may define a pullback functor $\Delta^{(\pi)!}:\bar{\mathcal{M}}^l(X^J) \rightarrow \bar{\mathcal{M}}^l(X^I)$ as $\Delta^{(\pi)!}\mathcal{F}=\Delta^{(\pi)\cdot}(\mathcal{F}/\mathcal{I}\mathcal{F})$, where $\mathcal{I} \subseteq \mathcal{O}_{X^{J}}$ is the ideal determined by $\Delta^{(\pi)}$. On the other hand, we have two important functors from $\bar{\mathcal{M}}^l(X^I)$ to $\bar{\mathcal{M}}^l(X^J)$.

\begin{itemize}
    \item $\Delta^{(\pi)!}$ has a right adjoint $\hat{\Delta}^{(\pi)}_!:\bar{\mathcal{M}}^l(X^I) \rightarrow \bar{\mathcal{M}}^l(X^J)$ which is exact and fully faithful. It identifies $\bar{\mathcal{M}}^l(X^I)$ with the full subcategory of those $\mathcal{D}_{X^J}$-sheaves $\mathcal{F}$ for which $\mathcal{F}=\varprojlim \mathcal{F}/\mathcal{I}^n\mathcal{F}$.
    \item $\Delta^{(\pi)!}$ has a right inverse $\Delta^{(\pi)}_!:\bar{\mathcal{M}}^l(X^I) \rightarrow \bar{\mathcal{M}}^l(X^J)$ which is also exact and fully faithful, given by $\Delta^{(\pi)}_! \mathcal{F}=(\Delta^{(\pi)}_!\omega_{X^I})^l \otimes \hat{\Delta}^{(\pi)}_{!}\mathcal{F}$. When restricted to $\mathcal{M}^l(X^J)$, it coincides with the usual $\mathcal{D}$-module pushforward $\Delta^{(\pi)}_!$.
\end{itemize}

Let $j^{(J / I)}:U^{(J/I)} \hookrightarrow X^J$ be the complement to the diagonals containing $X^I$, i.e. $U^{(J / I)}= \{ \{x_j\}_{j \in J} \in X^{J} : x_{j} \neq x_{j'} \text{ when }\pi(j)= \pi(j')\}$. We define a third functor $\tilde{\Delta}^{(\pi)}_!:\bar{\mathcal{M}}^l(X^I) \rightarrow \bar{\mathcal{M}}^l(X^J)$ as $$\tilde{\Delta}^{(\pi)}_!\mathcal{F} = \hat{\Delta}^{(\pi)}_!\mathcal{F} \otimes j^{(J/I)}_*j^{(J/I)*}\mathcal{O}_{X^J}.$$

If in addition we assume that $|J|=|I|+1$, so that $\Delta^{(\pi)}$ is a divisor in $X^J$, then for any $\mathcal{D}_{X^I}$-sheaf $\mathcal{F}$ we have an exact sequence $$0 \rightarrow \hat{\Delta}^{(\pi)}_!\mathcal{F} \rightarrow \tilde{\Delta}^{(\pi)}_!\mathcal{F} \rightarrow \Delta^{(\pi)}_!\mathcal{F} \rightarrow 0.$$

These functors satisfy the following ``lifting'' property: for any $\mathcal{H} \in \bar{\mathcal{M}}^l(X^J)$, $\mathcal{F} \in \bar{\mathcal{M}}^l(X^I)$ and any morphism of $\mathcal{D}_{X^J}$-sheaves
\begin{equation*}
\mu:\mathcal{H} \otimes j^{(J/I)}_* j^{(J/I)*} \mathcal{O}_{X^J} \rightarrow \Delta^{(\pi)}_!\mathcal{F},
\end{equation*}
there exist unique morphisms $\bar{\mu}$ and $\tilde{\mu}$ such that the following diagram commutes
\begin{equation}\label{Delta_to_Delta_tilde_lifting}
\begin{tikzcd}[column sep=4em]
  \mathcal{H} \rar["\tilde{\mu}"] \dar[hook]                    & \tilde{\Delta}^{(\pi)}_!\mathcal{F} \dar[two heads] \\
  \mathcal{H} \otimes j^{(J/I)}_* j^{(J/I)*} \mathcal{O}_{X^J} \urar["\bar{\mu}"] \rar["\mu"]   & \Delta^{(\pi)}_!\mathcal{F} 
\end{tikzcd}
\end{equation}

\begin{example}
If $X=\mathbb{A}^1$, then for any $\mathcal{D}_{\mathbb{A}^1}$-sheaf $\mathcal{F}$ we have $\Gamma(\mathbb{A}^2,\hat{\Delta}_!\mathcal{F})=\Gamma(\mathbb{A}^1,\mathcal{F})[[x-y]]$ and $\Gamma(\mathbb{A}^2,\tilde{\Delta}_!\mathcal{F})=\Gamma(\mathbb{A}^1,\mathcal{F})((x-y))$. In particular, for a translation equivariant $\mathcal{D}$-module $\mathcal{V}$ over $\mathbb{A}^1$, where $\Gamma(\mathbb{A}^1,\mathcal{V})=V[x]$ for some vector space $V$, we have
\begin{align*}
\Gamma(\mathbb{A}^2,\hat{\Delta}_!\mathcal{V})&=V[x][[x-y]]=V[y][[x-y]]\text{ and}\\
\Gamma(\mathbb{A}^2,\tilde{\Delta}_!\mathcal{V})&=V[x]((x-y))=V[y]((x-y)).
\end{align*}
\end{example}

Given a $\mathcal{D}_X$-sheaf $\mathcal{F}$, we define its space of \textit{$I$-OPE's} for any $I \in fSet$ as
\begin{align*}
OPE(\mathcal{F})_{I}&=\text{Hom}_{\bar{\mathcal{M}}^l(X^I)}(\mathcal{F}^{\boxtimes I}, \tilde{\Delta}^{(I)}_!\mathcal{F})\\
    &=\text{Hom}_{\bar{\mathcal{M}}^l(X^I)}(\mathcal{F}^{\boxtimes I} \otimes j^{(I)}_* j^{(I)*} \mathcal{O}_{X^I}, \tilde{\Delta}^{(I)}_!\mathcal{F}).
\end{align*}


These operations do not form an operad, since the composition of OPE's need not be an OPE. Instead, we may define their composition inside a larger vector space, in the following way: for any $\pi:J \twoheadrightarrow I$ in $fSet$, the \textit{OPE composition map}
\begin{equation*}
OPE(\mathcal{F})_{I} \otimes \bigotimes_{i\in I} OPE(\mathcal{F})_{J_{i}} \rightarrow \text{Hom}_{\bar{\mathcal{M}}^l(X^J)}(\mathcal{F}^{\boxtimes J}, \tilde{\Delta}^{(\pi)}_!\tilde{\Delta}^{(I)}_!\mathcal{F})
\end{equation*}
sends $(\gamma,\otimes_i \delta_i) $ to the map $ \gamma(\otimes_i \delta_i)$ defined as 
\begin{equation*}
\mathcal{F}^{\boxtimes J} \xrightarrow{\boxtimes_{i\in I}\delta_{i}}\boxtimes_{i \in I}\tilde{\Delta}^{(J_{i})}_{!}\mathcal{F} = \tilde{\Delta}^{(\pi)}_{!}\mathcal{F}^{\boxtimes I} \xrightarrow{\tilde{\Delta}^{(\pi)}_{!}\gamma} \tilde{\Delta}^{(\pi)}_{!}\tilde{\Delta}^{(I)}_{!}\mathcal{F}.
\end{equation*}
We say that the OPE's \textit{compose nicely} if $\gamma(\otimes_i \delta_i)$ takes values in $\tilde{\Delta}^{(J)}_!\mathcal{F} \subseteq \tilde{\Delta}^{(\pi)}_!\tilde{\Delta}^{(I)}_!\mathcal{F}$, so $\gamma(\otimes_i \delta_i) \in OPE(\mathcal{F})_{J}$.

We are now ready to present the main definition of \cite[Sect.~3.5]{BD1}.

\begin{definition}
An \textit{OPE algebra} over $X$ is a left $\mathcal{D}_{X}$-module $\mathcal{V}$ together with a binary OPE
\begin{equation*}
\circ : \mathcal{V} \boxtimes \mathcal{V} \rightarrow \tilde{\Delta}_! \mathcal{V}
\end{equation*}
in $OPE(\mathcal{V})_{\{1,2\}}$ and a horizontal section $\mathds{1} \in \mathcal{V}$, such that
\begin{enumerate}
    \item \textit{$\circ$ is associative}, that is, the OPE's $(\circ,\{\circ,id\})$ and $(\circ,\{id,\circ\})$ compose nicely and coincide as $3$-OPE's. That is, there exists $\circ_{3} \in OPE(\mathcal{V})_{\{1,2,3\}}$ such that the following diagram commutes
    
\begin{minipage}{\textwidth}
\begin{tikzcd}[row sep=0.5cm, column sep=1cm,every cell/.append style={align=center}]
&$\mathcal{V} \boxtimes \tilde{\Delta}_{!} \mathcal{V}  =\tilde{\Delta}_{23!}\mathcal{V}^{\boxtimes 2}$ \rar["\tilde{\Delta}_{23!}(\circ)"]& $\tilde{\Delta}_{23!}\tilde{\Delta}_{!} \mathcal{V}$\\
$\mathcal{V}^{\boxtimes 3}$ \urar["id \boxtimes \circ"] \drar["\circ \boxtimes id",'] \arrow[rr,"\circ_{3}"]& & $\tilde{\Delta}^{(3)}_{!} \mathcal{V}$ \uar[hook] \dar[hook']\\
&$ \tilde{\Delta}_{!} \mathcal{V} \boxtimes \mathcal{V} = \tilde{\Delta}_{12!}\mathcal{V}^{\boxtimes 2}$ \rar["\tilde{\Delta}_{12!}(\circ)"] & $\tilde{\Delta}_{12!}\tilde{\Delta}_{!} \mathcal{V}$\\
\end{tikzcd}
\end{minipage}
    where $\tilde{\Delta}^{(3)}_{!}=\tilde{\Delta}^{(\{1,2,3\})}_{!}$ and $\tilde{\Delta}_{ij!}=\tilde{\Delta}^{(\pi_{ij})}_{!}$ for the surjection $\pi_{ij}$ which identifies $i$ and $j$ in $\{1,2,3\}$.

    \item \textit{$\circ$ is commutative}, that is, it is fixed by the ``transposition of coordinates'' symmetry of $O_2(\{\mathcal{V},\mathcal{V}\},\mathcal{V})$.
    \item \textit{$\mathds{1}$ is a unit for $\circ$}, in the sense that for every section $a \in \mathcal{V}$, both $a \circ \mathds{1}$ and $\mathds{1} \circ a$ lie in $\hat{\Delta}_!\mathcal{V} \subseteq \tilde{\Delta}_! \mathcal{V}$ and they are equal to $a \in \mathcal{V}$ modulo $\mathcal{I}_{\Delta}\tilde{\Delta}_! \mathcal{V}$.
\end{enumerate}
\end{definition}

Their importance lies in the following result, proved in \cite[Sect.~3.5.10]{BD1}.

\begin{theorem}\label{equivalence_chiral_OPE}
The category $\mathcal{CA}(X)$ of chiral algebras is equivalent to the category $\mathcal{OPEA}(X)$ of OPE algebras over $X$.
\end{theorem}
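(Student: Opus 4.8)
The plan is to construct the two functors directly from the left-to-right $\mathcal{D}$-module conversion $\mathcal{V}\mapsto\mathcal{V}^{r}=\mathcal{V}\otimes_{\mathcal{O}_X}\omega_X$ together with the lifting property recorded in diagram (\ref{Delta_to_Delta_tilde_lifting}), which will immediately produce a bijection between chiral products and binary OPE's. The guiding principle is that the chiral product is exactly the \emph{polar part} of the OPE: composing a binary OPE $\circ:\mathcal{V}\boxtimes\mathcal{V}\to\tilde{\Delta}_!\mathcal{V}$ with the canonical surjection $\tilde{\Delta}_!\mathcal{V}\twoheadrightarrow\Delta_!\mathcal{V}$ of the exact sequence $0\to\hat{\Delta}_!\mathcal{V}\to\tilde{\Delta}_!\mathcal{V}\to\Delta_!\mathcal{V}\to 0$ yields a chiral product, while the \emph{uniqueness} of the lift in (\ref{Delta_to_Delta_tilde_lifting}) shows that a chiral product reconstructs the full OPE. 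On the level of global coordinates over $\mathbb{A}^1$ this is transparent: $\tilde{\Delta}_!\mathcal{V}$ is $V[x]((x-y))$, its subsheaf $\hat{\Delta}_!\mathcal{V}$ is $V[x][[x-y]]$, and $\Delta_!\mathcal{V}$ is the quotient of principal parts, so the projection of $Y(a,x-y)b$ is precisely the chiral product of Remark \ref{vertex_to_chiral}.

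Explicitly, from an OPE algebra $(\mathcal{V},\circ,\mathds{1})$ I would set $\mathcal{A}=\mathcal{V}^{r}$ and let $\mu$ be obtained by tensoring with $\omega_{X^2}$ the composite of the pole-allowing form $\bar{\circ}:\mathcal{V}\boxtimes\mathcal{V}\otimes j_*j^*\mathcal{O}_{X^2}\to\tilde{\Delta}_!\mathcal{V}$ of $\circ$ (supplied by the defining identification of $OPE(\mathcal{V})_{\{1,2\}}$) with the projection $\tilde{\Delta}_!\mathcal{V}\twoheadrightarrow\Delta_!\mathcal{V}$, and take the unit $u=\mathds{1}\otimes\omega_X$. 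Conversely, from a chiral algebra $(\mathcal{A},\mu,u)$ I would set $\mathcal{V}=\mathcal{A}^{l}$, form the left $\mathcal{D}$-module map $\Y^{2}=\mu\otimes\omega_{X^2}^{-1}:(\mathcal{V}\boxtimes\mathcal{V})\otimes j_*j^*\mathcal{O}_{X^2}\to\Delta_!\mathcal{V}$, and define $\circ:=\widetilde{\Y^{2}}$ via (\ref{Delta_to_Delta_tilde_lifting}), with $\mathds{1}=u^{l}$. The uniqueness clause in the lifting property makes these assignments mutually inverse on binary operations, and functoriality on morphisms is automatic, since a morphism in either category is a morphism of the underlying (left or right) $\mathcal{D}$-module intertwining products and units, while $\mathcal{V}\mapsto\mathcal{V}^r$ is an equivalence.

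It then remains to match the three axioms. The unit conditions correspond directly: the chiral requirement that $\mu\,(u\boxtimes\mathrm{id})$ be the canonical projection $j_*j^*\omega_X\boxtimes\mathcal{A}\to\Delta_!\mathcal{A}$ says exactly that the lift $\mathds{1}\circ a$ has vanishing image in $\Delta_!\mathcal{V}$, hence lies in $\hat{\Delta}_!\mathcal{V}$, and reduces to $a$ modulo $\mathcal{I}_{\Delta}$, which is the OPE unit axiom. Skew-symmetry of $\mu$ corresponds to commutativity of $\circ$: the transposition of the two copies of $X$ acts by $-1$ on $\omega_{X^2}$ (it sends $dx\wedge dy$ to $-dx\wedge dy$), so the conversion $\mathcal{A}\boxtimes\mathcal{A}=(\mathcal{V}\boxtimes\mathcal{V})\otimes\omega_{X^2}$ trades the antisymmetry of the Lie-operad product for the symmetry of the commutative OPE. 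Both verifications are local and compatible with the lifting, in the spirit of the coordinate formulas of Remark \ref{vertex_to_chiral}.

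The main obstacle is the equivalence between the Jacobi identity for $\mu$ and the associativity of $\circ$. Here I would extend the lifting property to $X^3$ and examine the two iterated compositions $(\circ,\{\circ,\mathrm{id}\})$ and $(\circ,\{\mathrm{id},\circ\})$ inside $\mathrm{Hom}_{\bar{\mathcal{M}}^l(X^3)}(\mathcal{V}^{\boxtimes 3},\tilde{\Delta}^{(\pi)}_!\tilde{\Delta}_!\mathcal{V})$, first proving that each \emph{composes nicely}, i.e. factors through $\tilde{\Delta}^{(3)}_!\mathcal{V}$, and then that the two resulting $3$-OPE's agree and define the common $\circ_3$. The polar parts of such a $3$-OPE along the three diagonals $x_i=x_j$ of $X^3$ are governed by the iterated chiral operations, and the coincidence of the two channels is controlled by the diagonal decomposition of $X^3$, i.e. by the same Cousin-type analysis of $\omega_{X^3}$ that produces the chiral Jacobi identity underlying Theorem \ref{equivalence_factorization_chiral}. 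I expect the delicate bookkeeping to be in tracking several pole orders simultaneously and in showing that ``composing nicely'' is not an extra hypothesis but a consequence of the existence of $\mu$ together with skew-symmetry; once this is secured, the equality of the two composites is the geometric incarnation of the Jacobi identity, exactly as associativity of the full operator product expansion (Theorem \ref{vertex_algebra_associativity}) encodes the Jacobi/Borcherds identity in the vertex-algebra setting.
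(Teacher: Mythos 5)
Your proposal takes essentially the same route as the paper's own (sketched) argument, which follows Beilinson--Drinfeld: the lifting diagram (\ref{Delta_to_Delta_tilde_lifting}) gives the bijection between chiral products and binary OPE's via $\mathcal{V}\mapsto\mathcal{V}^r$, units and skew-symmetry/commutativity are matched directly, and the hard equivalence between associativity of $\circ$ and the Jacobi identity of $\mu$ is reduced to the Cousin-type analysis of the diagonal decomposition of $X^3$ (the Chevalley--Cousin complex), exactly as the paper indicates before deferring to \cite{BD1}. Your version is correct and, if anything, slightly more explicit than the paper's sketch (e.g.\ the sign of the transposition on $\omega_{X^2}$ and the observation that ``composing nicely'' must be derived rather than assumed), but it is not a different proof strategy.
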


Once again, we shall not present the complete proof. Instead, we will give the main construction in the setting that motivates us: the translation equivariant case over $\mathbb{A}^1$.

Starting from a $\mathbb{G}_a$-equivariant chiral algebra $(\mathcal{A},\mu,u)$ over $\mathbb{A}^1$ with global sections $V[x] dx$ for some vertex algebra $(V,T,Y,|0\ran)$, we may consider the diagram (\ref{Delta_to_Delta_tilde_lifting}) for the map $\mu^l=Y^2$, which in terms of global sections is
\begin{equation*}
\begin{tikzcd}[column sep=4em]
  V \otimes V [x,y] \rar["\tilde{\mu}^l"] \dar[hook]                    & V[y]((x-y)) \dar[two heads] \\
  V \otimes V [x,y][(x-y)^{-1}] \urar["\bar{\mu}^l"] \rar["\mu^l"]   & V[y]((x-y))/V[y][[x-y]]
\end{tikzcd}
\end{equation*}

Now its associated OPE algebra is $\mathcal{V}=\mathcal{A}^l$ together with the binary OPE $\circ_{\mu}=\tilde{\mu}^l$ described above and $\mathds{1}=u^l=|0\ran$. More explicitely, it is defined as $$(a \cdot f(x))\circ_{\mu} (b \cdot g(y)) = Y(a,x-y)b \cdot f(x)g(y)$$
for all $a$, $b \in V$, $f(x) \in \C[x]$ and $g(y)\in \C[y]$. This reveals the reason for the name ``OPE algebra'': OPE stands for \textit{operator product expansion}, which for a physicist means that the product of two fields at nearby points can be expanded in  terms of other fields and the small parameter $x-y$ (see \cite{FBZ}).

It is easy to prove the commutativity of $\circ_{\mu}$ from the skewsymmetry of $\mu$ and viceversa, as well as the relationship between the respective units. But it is not as immediate to deal with the associativity: only after a careful manipulation of the Chevalley-Cousin complex of $\mathcal{A}$ one may use its cohomological properties to show that, assuming  $\circ_{\mu}$ commutative ($\mu$ skewsymmetric), the associativity of $\circ_{\mu}$ is equivalent to the Jacobi identity of $\mu$.

As a last remark, note that for any $\mathbb{G}_a$-equivariant OPE algebra $\mathcal{V}$ over $\mathbb{A}^{1}$, the associativity property of its OPE product is equivalent to the associativity given by Thm. \ref{vertex_algebra_associativity} of the vertex algebra $V$ obtained by taking the fiber of $\mathcal{V}$ at zero.

\section{OPE algebras as the gluing data for factorization algebras}\label{OPE_algebras_are_gluing_data}

We have presented a survey of the construction of the equivalences $$\mathcal{FA}(X) \simeq \mathcal{CA}(X) \simeq \mathcal{OPEA}(X).$$

Our objective in this section is to rewrite their composition $\mathcal{FA}(X) \simeq \mathcal{OPEA}(X)$ in such a way that we make no mention of $\mathcal{CA}(X)$ and, furthermore, we make no use of any linear algebra or cohomology whatsoever, just geometric concepts and techniques. This will in turn pave the road for further generalizations to non-linear settings in Section \ref{OPE_monoids}. In order to present the constructions more clearly and for ease of reading, we will work over $X=\mathbb{A}^1$, although we believe that the tools we use are general enough in order for the results to remain valid over more general $X$.

Our starting point is the observation in \cite[Sect.~3.5.11]{BD1} that translated to our notation reads ``the OPE $\circ_{\mu}$ is the gluing datum for $\mathcal{V}_2$''. We will present an adequate context that will enable us to formalize this idea.


In order to work with spaces of iterated Laurent series, we will axiomatize them as \textit{$n$-Tate spaces}. Let us recall that a Tate space is a topological vector space isomorphic to $V \oplus W^{*}$, where $V$ and $W$ are discrete (i.e., $W^{*}$ is linearly compact). The most important example for us will be $\C((t))=t^{-1}\C[t^{-1}]\oplus\C[[t]]$, where $\C((t))$ and $\C[[t]]$ have their usual topologies where the subspaces $t^{n}\C[[t]]$ form a basis of neighborhoods of zero. 

This definition has been generalized in \cite{BGW} to Tate objects in arbitrary exact categories. In particular, this allows one to define $2$-Tate spaces as Tate objects in the category of Tate spaces, and with further iterations one may define $n$-Tate spaces in general. All $n$-Tate spaces of countable dimension are direct summands of $\C((t_1))\cdots((t_n))$ \cite[Ex.~7.6]{BGW}, and since all spaces we want to work with are of this form we shall assume all $n$-Tate spaces to be countable from now on.

Given $V_1$ and $V_2$ topological vector spaces, their completed tensor product $V_1 \hat{\otimes} V_2$ is usually defined as the completion of $V_1 \otimes V_2$ with respect to the topology where a basis of open neighborhoods of zero is given by the spaces $W_1 \otimes V_2 \, + \, V_1 \otimes W_2$ with $W_i$ open neighborhood of zero in $V_i$ for $i=1,2$. The operation $\hat{\otimes}$ is associative, commutative, and it satisfies that $\C[[t_1,t_2]]=\C[[t_1]] \hat{\otimes} \C[[t_2]]$. But $\C((t_1))((t_2)) \neq \C((t_1)) \hat{\otimes} \C((t_2))$, so we need a different notion of tensor product for Tate spaces.

Beilinson and Drinfeld define the \textit{normally ordered tensor product} $V_1 \artimes V_2$ in \cite[Sect.~3.6.1]{BD1} as the completion of $V_1 \otimes V_2$ with respect to the topology where a subspace $U$ is open if and only if there exists $P \subseteq V_2$ open such that $V_1 \otimes P \subseteq U$ and for each $v_2 \in V_2$ there exists $P_{v_2} \subseteq V_1$ open such that $P_{v_2} \otimes V_2 \subseteq U$. It is associative, noncommutative, and we have $$\C((t_1)) \cdots ((t_n)) = \C((t_1)) \artimes \cdots \artimes \C((t_n))$$for all $n \geq 2$. 


Braunling et al have defined in \cite{BGHW} the normally ordered tensor product of Tate objects in general, and therefore if we denote the category of $n$-Tate spaces as $n\text{-}Tate$, we have a functor $$- \artimes - : n\text{-}Tate \times m\text{-}Tate \rightarrow (n+m)\text{-}Tate.$$

Thus $\artimes$ defines a monoidal structure in $\infty\text{-}Tate=\bigcup_{n} n\text{-}Tate$. Moreover, if $Alg$ denotes the category of $\C$-algebras (i.e., algebras in ($Vec$, $\otimes$, $\C$)), then $\artimes$ still defines a monoidal structure in the subcategory $Alg \cap \infty\text{-}Tate$, since $\artimes$ and $\otimes$ commute with each other.

Let us consider the category $$\mathcal{C}=\{\text{Spec }A\,:\,A \in Alg \cap \infty\text{-}Tate \} \subseteq AffSch.$$

Given morphisms $Z_{1}\rightarrow Z_{2}$ and $Z \rightarrow Z_{2}$ in $\mathcal{C}$, we will define the \textit{fiber product of $Z$ and $Z_{1}$ over $Z_{2}$} as $$Z \times_{Z_{2}} Z_{1}= \text{Spec }\Gamma(Z,\mathcal{O}_{Z}) \artimes_{\Gamma(Z_{2},\mathcal{O}_{Z_{2}})} \Gamma(Z_{1},\mathcal{O}_{Z_{1}}).$$

Over each $Z=\text{Spec }A\in \mathcal{C}$, we may consider the category $$\mathcal{S}_{Z}=\{\tilde{M}\,:\,M \in Mod_{A} \cap \infty\text{-}Tate\} \subseteq \mathcal{M}_{\mathcal{O}}(Z).$$

Then $\mathcal{S}=\bigcup_{Z \in \mathcal{C}}\mathcal{S}_{Z}$ is a \textit{fibered category} over $\mathcal{C}$. That is, we have a functor $p: \mathcal{S} \rightarrow \mathcal{C}$ and given a morphism $f:Z_1 \rightarrow Z_2$ in $\mathcal{C}$ we choose a \textit{pullback} $f^* : \mathcal{S}_{Z_{2}} \rightarrow \mathcal{S}_{Z_{1}}$ given by $$\Gamma(Z_{1},f^{*}\mathcal{M})= \Gamma(Z_{2}, \mathcal{M} ) \artimes_{\Gamma(Z_{2},\mathcal{O}_{Z_{2}})} \Gamma(Z_{1},\mathcal{O}_{Z_{1}})$$for all $\mathcal{M} \in \mathcal{S}_{Z_{2}}$.

Let us now consider a factorization algebra $\{\mathcal{V}_{I}\}$ over $\mathbb{A}^1$, not necessarily $\mathbb{G}_a$-equivariant. If we endow each $\mathcal{V}_{I}$ with the discrete topology, it trivially becomes an element of $\mathcal{S}_{\mathbb{A}^{I}}$.

From the factorization axioms, we know that
\begin{itemize}
    \item $\mathcal{V}_2 \big|_{\mathbb{A}^2 \setminus \Delta} = j^{*}\mathcal{V} \simeq j^{*}(\mathcal{V} \boxtimes \mathcal{V}) = \mathcal{V} \boxtimes \mathcal{V} \big|_{\mathbb{A}^2 \setminus \Delta}$ in $\mathcal{S}_{\mathbb{A}^2 \setminus \Delta}$.
    \item $\mathcal{V}_2 \big|_{\Delta}= \Delta^{*} \mathcal{V}_2 \simeq \mathcal{V} \simeq \mathcal{O}_{\mathbb{A}^1} \boxtimes \mathcal{V} \big|_{\Delta}$ in $\mathcal{S}_{\Delta}$.
\end{itemize}

We see that the restrictions of $\mathcal{V}_2$ to both $\Delta$ and $U=\mathbb{A}^2 \setminus \Delta$ only depend on $\mathcal{V}$. We would like to present $\mathcal{V}_2$ as a sheaf glued from these restrictions, but we immediately see that there is a problem, since $\Delta$ and $U$ do not intersect. This is where Tate spaces come to our aid.

Let us define
\begin{align*}
\hat{\Delta}&:=\text{Spec }\varprojlim \frac{\C[x,y]}{(x-y)^n \C[x,y]} = \text{Spec }\C[y][[x-y]],\\
\tilde{\Delta} &:=U \times_{\mathbb{A}^2} \hat{\Delta}= \text{Spec }\C[y][[x-y]]_{x-y}= \text{Spec }\C[y]((x-y)).
\end{align*}

We may interpret $\hat{\Delta}$ as the (affinization of the) \textit{formal neighborhood of the diagonal} in $\mathbb{A}^2$, while $\tilde{\Delta}$ would be its \textit{punctured formal neighborhood}.


The introduction of these schemes allows us to think about the functors $\hat{\Delta}_!$ and $\tilde{\Delta}_!$ in a more concrete way: if we denote the canonical inclusion maps as $\hat{j}: \hat{\Delta} \rightarrow \mathbb{A}^2$ and $\tilde{j}: \tilde{\Delta} \rightarrow \mathbb{A}^2$, we have that $$\hat{\Delta}_!\mathcal{V} = \hat{j}_*\hat{j}^*\mathcal{O}_{\mathbb{A}^1} \boxtimes \mathcal{V} \quad \text{and} \quad \tilde{\Delta}_!\mathcal{V} = \tilde{j}_*\tilde{j}^*\mathcal{O}_{\mathbb{A}^1} \boxtimes \mathcal{V}$$
for all $\mathcal{V}\in\mathcal{M}^l(\mathbb{A}^1)$. Another advantage of this point of view is the fact that the functors $\hat{j}_*\hat{j}^*$ and $\tilde{j}_*\tilde{j}^*$ are more generally defined, not only for $\mathcal{D}$-sheaves. This is appropriate when dealing with OPE algebras since, much like factorization algebras, the $\mathcal{D}$-module structure is not needed in order to define their structure, but follows as a consequence of the axioms.

It is not difficult to show that the diagonal isomorphism $\nu$ induces an isomorphism $\mathcal{V}_2 \big|_{\hat{\Delta}}\simeq \mathcal{O}_{\mathbb{A}^1} \boxtimes \mathcal{V} \big|_{\hat{\Delta}}$ in $\mathcal{S}_{\hat{\Delta}}$ (see \cite[Sect.~3.4.7]{BD1}). Thus the restrictions of $\mathcal{V}_2$ to both $\hat{\Delta}$ and $U$ have been stated in terms of $\mathcal{V}$, but we may now define between them a gluing isomorphism along the intersection $\tilde{\Delta}$ as
$$\varphi: \mathcal{V} \boxtimes \mathcal{V} \big|_{\tilde{\Delta}} \xrightarrow{\simeq} \mathcal{V}_2 \big|_{\tilde{\Delta}} \xrightarrow{\simeq} \mathcal{O}_{\mathbb{A}^1} \boxtimes \mathcal{V} \big|_{\tilde{\Delta}}.$$

Since $\tilde{j}_*$ is right adjoint to $\tilde{j}^*$, this isomorphism is determined by a map $\mathcal{V} \boxtimes \mathcal{V} \rightarrow \tilde{j}_*\tilde{j}^*\mathcal{O}_{\mathbb{A}^1} \boxtimes \mathcal{V} = \tilde{\Delta}_!\mathcal{V}$, which is exactly the OPE product $\circ_{\mu}$.

The triple $(\mathcal{V} \boxtimes \mathcal{V} \big|_{U},\mathcal{O}_{\mathbb{A}^1} \boxtimes \mathcal{V} \big|_{\hat{\Delta}},\varphi)$ is an example of a \textit{descent datum} in $\mathcal{S}$ relative to the family of morphisms $\{U \xrightarrow{j} \mathbb{A}^2, \hat{\Delta} \xrightarrow{\hat{j}} \mathbb{A}^2 \}$. Let us recall the definition in full generality, since we will need to use it in various contexts.

\begin{definition}\label{descent_datum_fibered_category}
\cite{St} Let $p: \mathcal{S} \rightarrow \mathcal{C}$ be a fibered category, and let  $\{f_i : Z_i \rightarrow Z \}_{i\in I}$ be a family of morphisms in $\mathcal{C}$, such that all the fiber products $Z_i \times_{Z} Z_j$ and $Z_{i} \times_{Z} Z_{j} \times_{Z} Z_{k}$ exist. A \textit{descent datum $(\mathcal{W}_{i},\varphi_{ij})$ in $\mathcal{S}$ relative to the family $\{f_i : Z_i \rightarrow Z\}_{i \in I}$} is given by
\begin{itemize}
    \item An object $\mathcal{W}_{i} \in \mathcal{S}_{Z_{i}}$ for each $i \in I$
    \item An isomorphism $\varphi_{ij}: \mathcal{W}_{i} \big|_{Z_{i} \times_{Z} Z_{j}} \rightarrow  \mathcal{W}_{j} \big|_{Z_{i} \times_{Z} Z_{j}}$ for each $(i,j) \in I^2$
\end{itemize}
satisfying the \textit{cocycle condition}: $$\varphi_{ik}\big|_{Z_{i} \times_{Z} Z_{j} \times_{Z} Z_{k}} = \varphi_{jk}\big|_{Z_{i} \times_{Z} Z_{j} \times_{Z} Z_{k}} \varphi_{ij}\big|_{Z_{i} \times_{Z} Z_{j} \times_{Z} Z_{k}}$$ for each triple $(i,j,k) \in I^3$ in $\mathcal{S}_{Z_{i} \times_{Z} Z_{j} \times_{Z} Z_{k}}$. A descent datum is called \textit{effective} if there exists an object $\mathcal{W} \in \mathcal{S}_{Z}$ together with isomorphisms $\varphi_i : \mathcal{W} \big|_{Z_{i}} \rightarrow \mathcal{W}_i$ in $\mathcal{S}_{Z_{i}}$ such that $\varphi_{ij}=\varphi_{j}\big|_{Z_{i} \times_{Z} Z_{j}}\varphi_{i}^{-1}\big|_{Z_{i} \times_{Z} Z_{j}}$ for all $(i,j) \in I^2$. In other words, a descent datum is effective if and only if we can glue all the objects $\mathcal{W}_i$ into a single object $\mathcal{W}$.
\end{definition}

Returning to the factorization/OPE setting, we know that for any given OPE algebra over $\mathbb{A}^1$ the descent datum $(\mathcal{V} \boxtimes \mathcal{V} \big|_{U},\mathcal{O}_{\mathbb{A}^1} \boxtimes \mathcal{V} \big|_{\hat{\Delta}},\varphi)$ is effective due to the existence of $\mathcal{V}_2$ guaranteed by Theorems \ref{equivalence_factorization_chiral} and \ref{equivalence_chiral_OPE}. But we will now show that this effectiveness is independent of those theorems: it is a direct corollary of the following version of the well-known Beauville-Laszlo descent lemma proven in \cite{BL2}.

\begin{lemma}\label{our_version_BL_descent}
Suppose given
\begin{itemize}
    \item An $\C[x,y]_{x-y}$-module $F$
    \item An $\C[y][[x-y]]$-module $G$ which is $(x-y)$-regular (i.e., $x-y$ acts injectively on $G$).
    \item An isomorphism of $\C[y]((x-y))$-modules $$\varphi : \hat{F} = F \artimes_{\C[x,y]}\C[y][[x-y]] \rightarrow G_{x-y}.$$
\end{itemize}
Then there exists an $(x-y)$-regular $\C[x,y]$-module $M$ and isomorphisms $\alpha: M_{x-y} \rightarrow F$ and $\beta: \hat{M} \rightarrow G$ such that $\varphi$ is the composition of $$\hat{F} \xrightarrow{\hat{\alpha}^{-1}} \hat{M}_{x-y} \xrightarrow{\beta_{x-y}} G_{x-y}.$$
In other words, if $\mathcal{W}_{1}\in\mathcal{M}_{\mathcal{O}}(U)$ and $\mathcal{W}_{2}\in\mathcal{M}_{\mathcal{O}}(\hat{\Delta})$ are determined by the modules of global sections $F$ and $G$ respectively, the descent datum $(\mathcal{W}_{1},\mathcal{W}_{2},\varphi)$ in $\mathcal{S}$ relative to the family $\{U \xrightarrow{j} \mathbb{A}^2, \hat{\Delta} \xrightarrow{\hat{j}} \mathbb{A}^2 \}$ is effective, with $M$ being the module of global sections of their glued $\mathcal{O}$-module $\mathcal{W}$ over $\mathbb{A}^2$ and $\{\alpha,\,\beta\}$ being the isomorphisms corresponding to $\psi_{1}:\mathcal{W}\big|_{U} \rightarrow \mathcal{W}_{1}$ and $\psi_{2}:\mathcal{W}\big|_{\hat{\Delta}} \rightarrow \mathcal{W}_{2}$.
\end{lemma}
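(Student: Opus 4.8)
The plan is to realize the statement as an instance of the classical Beauville--Laszlo gluing and to produce $M$ explicitly as a fiber product. The first step is a change of variables: writing $t=x-y$ exhibits $A:=\C[x,y]=\C[y][t]$ as a polynomial ring in the single variable $t$ over the base $R:=\C[y]$, with $t$ a non-zero-divisor. Under this identification $\C[y][[x-y]]=R[[t]]=\hat A$ is exactly the $t$-adic completion of $A$, $\C[x,y]_{x-y}=A_t$ is the localization, and $\C[y]((x-y))=\hat A_t=\hat A[t^{-1}]$ is the common overring through which $\hat F=F\artimes_A\hat A=F\otimes_{A_t}\hat A_t$ is formed. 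Thus $(F,G,\varphi)$ is precisely a Beauville--Laszlo gluing datum for the element $t$, and the whole content is to invert the functor $M\mapsto(M_t,\hat M,\mathrm{can})$.

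The second step is to construct the candidate. I set
\[
M:=F\times_{G_t}G=\ker\!\bigl(F\oplus G\xrightarrow{\ \rho\ }G_t\bigr),\qquad \rho(f,g)=\varphi(f\otimes 1)-g,
\]
where $f\otimes 1$ is the image of $f$ under $F\to\hat F$ and $g$ is viewed in $G_t$ through localization. That $M$ is $(x-y)$-regular is immediate, since $t$ acts injectively on $F$ (where it is invertible) and on $G$ (by hypothesis), hence on the submodule $M\subseteq F\oplus G$. Localizing at $t$ is exact, and because $F_t=F$ and $(G_t)_t=G_t$ the localized sequence computes $M_t=\{(f,\gamma)\in F\oplus G_t:\gamma=\varphi(f\otimes 1)\}\cong F$; this produces the isomorphism $\alpha\colon M_t\to F$ with essentially no work.

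The heart of the matter, and the step I expect to be the main obstacle, is the identification $\beta\colon\hat M\xrightarrow{\sim}G$. Here the arithmetic specific to our situation intervenes through the decomposition
\[
\hat A_t=A_t+\hat A,\qquad A_t\cap\hat A=A,
\]
which says that a Laurent series in $(x-y)$ over $\C[y]$ splits uniquely into a principal part lying in $A_t$ and a Taylor part lying in $\hat A$. Transported through $\varphi$, this is what upgrades $\rho$ to a short exact sequence $0\to M\to F\oplus G\to G_t\to 0$, and, after passing to the $t$-adic completion (which is exact on the $(x-y)$-regular modules at hand and, crucially, fixes $G$ via the $(x-y)$-adic completeness of the Tate module $G$), yields $\hat M\cong G$. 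The delicate points I will have to control are exactly these: that $t$-adic completion stays exact on the modules in play, that completing does not enlarge $G$, and that the normally ordered tensor product $\artimes$ really reproduces $F\otimes_{A_t}\hat A_t$ for the discrete $F$ and Tate $\hat A_t$ we are using, so that $\hat F$ and $\widehat{M_t}$ are canonically identified.

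Finally, the compatibility $\varphi=\beta_{x-y}\circ\hat\alpha^{-1}$ holds by construction, since $\rho$ was assembled from $\varphi$ in the first place; chasing the definitions of $\alpha$ and $\beta$ through the completion $\hat F\cong\widehat{M_t}\cong G_t$ recovers $\varphi$ on the nose. Reinterpreting $M_t\cong F$ and $\hat M\cong G$ as restrictions of the $\mathcal O$-module $\mathcal W$ over $\mathbb{A}^2$ determined by $M=\Gamma(\mathbb{A}^2,\mathcal W)$ to $U$ and $\hat\Delta$, with $\psi_1=\alpha$ and $\psi_2=\beta$, gives exactly the asserted effectiveness of the descent datum $(\mathcal W_1,\mathcal W_2,\varphi)$.
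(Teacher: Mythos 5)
Your construction of $M$ is the right one --- by the $(x-y)$-regularity of $G$, the fiber product $F\times_{G_{x-y}}G$ coincides with the kernel of the induced map $\bar\varphi\colon F\to G_{x-y}/G$, which is exactly the module the paper (following Beauville--Laszlo) takes --- and your verification of $\alpha\colon M_{x-y}\xrightarrow{\sim}F$ by exactness of localization is fine. But the two points you yourself flag as ``delicate'' are genuine gaps, and they are precisely where the content of the lemma lies. First, the identification $\hat F=F\artimes_{\C[x,y]}\C[y][[x-y]]=F\otimes_{\C[x,y]_{x-y}}\C[y]((x-y))$ is false in general: the normally ordered (completed) tensor product differs from the ordinary one whenever $F$ is infinite-dimensional; e.g.\ for $F=W[x,y]_{x-y}$ the former gives $W[y]((x-y))$ while the latter gives only the finite sums $W\otimes_{\C}\C[y]((x-y))$. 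The infinite-dimensional case is the one the lemma exists for (sections of $\mathcal{V}\boxtimes\mathcal{V}$ for an infinite-dimensional vertex algebra), and this discrepancy is the \emph{only} difference between the present lemma and the literal Beauville--Laszlo statement, so a proof must engage it rather than hope it can be controlled. The paper's proof consists exactly of this missing step: writing $A=\C[x,y]$ and $\hat A=\C[y][[x-y]]$, one observes that the Beauville--Laszlo argument only interacts with these modules through the quotient, and $(M\otimes_{A}\hat A_{x-y})/(M\otimes_{A}\hat A)\simeq \hat M_{x-y}/\hat M$, i.e.\ the difference between $\otimes$ and $\artimes$ disappears after quotienting, since the infinite ``tails'' of the completed tensor product are absorbed into $\hat M$.

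Second, your route to $\beta\colon\hat M\xrightarrow{\sim}G$ --- complete the short exact sequence $(x-y)$-adically, using that completion ``is exact'' and ``fixes $G$'' --- does not work. $G$ is not assumed $(x-y)$-adically complete: $G=\C[y]((x-y))$ is a legitimate input (an $(x-y)$-regular $\C[y][[x-y]]$-module), and it is not even $(x-y)$-adically separated. Moreover $(x-y)$-adic completion is not exact on non-finitely-generated modules. Most importantly, $\hat M$ in the statement means $M\artimes_{\C[x,y]}\C[y][[x-y]]$ (the pullback in the fibered category $\mathcal{S}$), \emph{not} $\varprojlim M/(x-y)^nM$; under the inverse-limit reading the lemma is actually false, because $\varprojlim M/(x-y)^nM$ is always $(x-y)$-adically separated (an element of $\bigcap_k (x-y)^k\varprojlim M/(x-y)^nM$ has vanishing $k$-th component for every $k$), whereas $G$ need not be: in the example above, any $M$ with $M_{x-y}\simeq\C[x,y]_{x-y}$ has separated completion, which can never be isomorphic to $\C[y]((x-y))$. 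Dispensing with every completeness hypothesis on $G$ is precisely the achievement of Beauville--Laszlo, so their lemma cannot be re-derived by the naive completion argument; one must either cite their result (as the paper does) or reproduce their actual argument for $M\otimes_{A}\hat A\simeq G$, in both cases adding the quotient observation above to handle $\artimes$.
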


\begin{proof}
The only differences between this lemma and that of \cite{BL2} is the specialization to the case $A=\C[x,y]$ and $f=x-y$ (just for simplicity), and the fact that we take $\hat{F}=F[[x-y]]=F \artimes_{\C[x,y]}\C[y][[x-y]]$ instead of $F \otimes_{\C[x,y]}\C[y][[x-y]]$, which are different when $F$ is infinite-dimensional.

But the key insight is to observe that the proof in \cite{BL2} still holds, since we may still define $M=\text{Ker }\bar{\varphi}$ for the induced map $\bar{\varphi}: F \rightarrow G_f /G$, and the differences in the two approaches disappear in the quotient:
\begin{equation*}
(M \otimes_{A}\hat{A}_{f}) / ( M \otimes_{A}\hat{A}) \simeq \hat{M}_f / \hat{M}.
\end{equation*}
\end{proof}
Thus far, given any $\mathcal{O}$-module $\mathcal{V}$ over $\mathbb{A}^1$, we have shown that there exists a bijection between binary commutative OPE's $\circ \in O_2(\{\mathcal{V},\mathcal{V}\},\mathcal{V})$ and glued $\mathcal{O}$-modules $\mathcal{V}_2$ over $\mathbb{A}^2$ satisfying the desired factorization properties. In order to do this, we did not need to fulfill any cocycle condition, because there were no nontrivial triple intersections. The situation changes when we move on towards the next step and try to build the sheaf $\mathcal{V}_3$ over $\mathbb{A}^3$.

For each pair $1 \leq i < j \leq 3$, let $\Delta_{ij}$ be the diagonal plane in $\mathbb{A}^3$ where $x_i=x_j$. Let us now consider the covering of $\mathbb{A}^3$ by affine schemes in $\mathcal{C}$ given by $\mathbb{A}^3=Z_1 \cup \bigcup_{1\leq i < j \leq 3}Z_{2}^{(ij)} \cup Z_3$, where

\begin{enumerate}[label=(\roman*)]
    \item $Z_1 = \mathbb{A}^3 \setminus (\bigcup_{1 \leq i < j \leq 3}\Delta_{ij}) \xrightarrow{j_{1}} \mathbb{A}^3$, that is $$Z_1 = \text{Spec }\C[x_1,x_2,x_3]_{x_1-x_2,x_1-x_3,x_2-x_3}.$$
    \item $Z_2^{(ij)} =\widehat{\Delta_{ij} \setminus \Delta^{(3)}} \xrightarrow{j_{2}^{(ij)}} \mathbb{A}^3$, where the completion is to be taken inside $\mathbb{A}^{3} \setminus (\Delta_{ik} \cup \Delta_{jk})$ and $\{k\}=\{1,2,3\} \setminus \{i,j\}$. That is $$Z_{2}^{(ij)} = \text{Spec }\C[x_j,x_k][[x_i-x_j]]_{x_k-x_i,x_k-x_j}.$$
    \item $Z_3 =\widehat{\Delta^{(3)}} \xrightarrow{j_{3}} \mathbb{A}^3$ is the affinization of the completion of the principal diagonal in $\mathbb{A}^{3}$, that is $$Z_3 = \text{Spec }\C[x_3][[x_1-x_3,x_2-x_3]].$$
\end{enumerate}

We present a visualization of this covering in Figure \ref{fig:1}, where the perspective has been chosen so that the principal diagonal in $\mathbb{A}^{3}$ is reduced to a dot.

\begin{figure}[ht]
    \centering
    \includegraphics{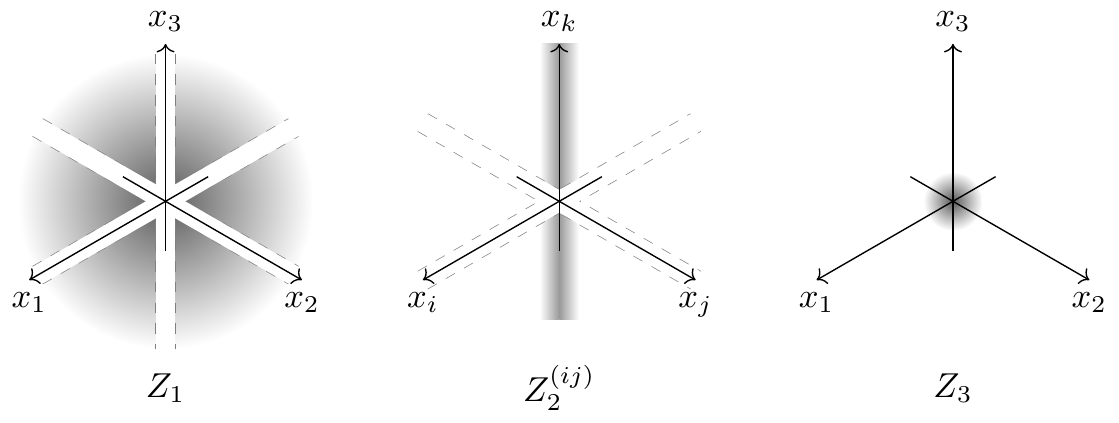}
    \caption{Covering of $\mathbb{A}^{3}$}
    \label{fig:1}
\end{figure}

Now for any given $\mathcal{O}$-module $\mathcal{V}$ over $\mathbb{A}^1$ with a binary commutative OPE $\circ$, we will use the factorization axioms to define what will eventually become a descent datum (for gluing $\mathcal{V}_3$) relative to the family $\{j_1,j_2^{(12)},j_2^{(13)},j_2^{(23)},j_3\}$. Let us consider
\begin{itemize}
    \item $\mathcal{W}_1=\mathcal{V} \boxtimes \mathcal{V} \boxtimes \mathcal{V} \big|_{Z_{1}}\in \mathcal{S}_{Z_{1}}$ 
    \item $\mathcal{W}_2^{(ij)}=\mathcal{O}_{\mathbb{A}^1} \boxtimes \mathcal{V} \boxtimes \mathcal{V} \big|_{Z_{2}^{(ij)}}\in\mathcal{S}_{Z_{2}^{(ij)}}$ for each $1 \leq i < j \leq 3$.
    \item $\mathcal{W}_3=\mathcal{O}_{\mathbb{A}^1} \boxtimes \mathcal{O}_{\mathbb{A}^1} \boxtimes \mathcal{V} \big|_{Z_{3}}\in\mathcal{S}_{Z_3}$ 
\end{itemize}

It remains to define the transition isomorphisms between these sheaves, which shall be built from the isomorphism $\varphi: \mathcal{V} \boxtimes \mathcal{V} \big|_{\tilde{\Delta}} \rightarrow \mathcal{O}_{\mathbb{A}^1} \boxtimes \mathcal{V} \big|_{\tilde{\Delta}}$ determined by $\circ$. 

\textit{Case I:} Let $Z_{12}^{(ij)}=Z_{1} \times_{\mathbb{A}^3} Z_{2}^{(ij)}$, that is $$Z_{12}^{(ij)}=\text{Spec }\C[x_j,x_k]((x_i-x_j))_{x_k-x_i,x_k-x_j}.$$

Let $j_{12}^{(ij)}$ be the inclusion $ Z_{12}^{(ij)}\hookrightarrow \text{Spec }\C[x_j,x_k]((x_i-x_j)) = \mathbb{A}^1 \times \tilde{\Delta}$. Then we define $$\varphi_{12}^{(ij)} = j_{12}^{(ij)*}( id_{\mathcal{V}} \boxtimes \varphi):\mathcal{W}_{1}\big|_{Z_{12}^{(ij)}} \rightarrow \mathcal{W}_2^{(ij)} \big|_{Z_{12}^{(ij)}}.$$

\textit{Case II:} Let $Z_{23}^{(ij)}=Z_{2}^{(ij)} \times_{\mathbb{A}^3} Z_3$, that is $$Z_{23}^{(ij)}= \text{Spec  }\C[x_j]((x_k-x_j))[[x_i-x_j]].$$

Let $j_{23}^{(ij)}$ be the inclusion $ Z_{23}^{(ij)}\hookrightarrow \text{Spec }\C[x_j]((x_k-x_j))[x_i-x_j] = \tilde{\Delta} \times \mathbb{A}^1$. Then we define $$\varphi_{23}^{(ij)} = j_{23}^{(ij)*}(\varphi \boxtimes id_{\mathcal{O}_{\mathbb{A}^1}} ):\mathcal{W}_{2}^{(ij)}\big|_{Z_{23}^{(ij)}} \rightarrow \mathcal{W}_3 \big|_{Z_{23}^{(ij)}}.$$ 

\textit{Case III:} For any pair of choices $i<j$ and $i'<j'$ in $\{1,2,3\}$, we define a map $ \mathcal{W}_2^{(ij)} \rightarrow \mathcal{W}_2^{(i'j')}$ by permuting the appropriate factors.

\begin{example}
Let us consider the $\mathbb{G}_a$-equivariant setting, where $\Gamma(\mathbb{A}^1,\mathcal{V})=V[x]$ for a vector space $V$, and define $Y(a,x-y)b=\varphi(a \otimes b)\in V((x-y))$ for $a,b \in V$. In that case, the maps $\varphi_{12}^{(23)}$ and $\varphi_{23}^{(23)}$ can be written down in global coordinates as
\begin{align*}
V^{\otimes 3}[x_1,x_3]((x_2-x_3))_{x_1-x_2,x_1-x_3} &\xrightarrow{\varphi_{12}^{(23)}} V^{\otimes 2}[x_1,x_3]((x_2-x_3))_{x_1-x_2,x_1-x_3},\\
a \otimes b \otimes c \, f(x_1,x_2,x_3) &\mapsto a \otimes Y(b,x_2-x_3)c \, f(x_1,x_2,x_3),\\
V^{\otimes 2}[x_3]((x_1-x_3))[[x_2-x_3]] &\xrightarrow{\varphi_{23}^{(23)}} V[x_3]((x_1-x_3))[[x_2-x_3]],\\
\sum_{m=0}^{\infty} a \otimes b \, f(x_1,x_2,x_3) (x_2-x_3)^{m} \mapsto &\sum_{m=0}^{\infty} Y(a,x_1-x_3)b \, f(x_1,x_2,x_3) (x_2-x_3)^{m}.
\end{align*}
\end{example}

By now we have defined all possible transition maps between members of the family except for one case, as we show in the following diagram (where each arrow is only defined in its corresponding intersection).

\[
\begin{tikzcd}[row sep=scriptsize, column sep=scriptsize]
 & & & \mathcal{W}_1 \arrow[llldd,"\varphi_{12}^{(12)}",'] \arrow[rrrdd,"\varphi_{12}^{(13)}"]& & & \\
 & & & & & & \\
\mathcal{W}_2^{(12)}\arrow[rrrrrr,leftrightarrow] \arrow[rrrdddd,"\varphi_{23}^{(12)}",'] & & & & & & \mathcal{W}_2^{(13)} \arrow[ddddlll,"\varphi_{23}^{(13)}",leftrightarrow]\\
 & & & & \mathcal{W}_2^{(23)} \arrow[rru,leftrightarrow] \arrow[dddl,"\varphi_{23}^{(23)}" near start,'] \arrow[from=luuu,"\varphi_{12}^{(23)}",crossing over]& & \\
 & & & & & & \\
 & & & & & & \\
 & & & \mathcal{W}_3 \arrow[from=uuuuuu,"\varphi_{13}" near start,dashed,crossing over] & & & \\
 \arrow[from=3-1,to=4-5,leftrightarrow,crossing over]{drrrr}
\end{tikzcd}
\]

The remaining isomorphism $\varphi_{13}: \mathcal{W}_{1}\big|_{Z_1 \times_{\mathbb{A}^3} Z_3} \rightarrow \mathcal{W}_3\big|_{Z_1 \times_{\mathbb{A}^3} Z_3}$ needs to be defined taking the cocycle condition into account. In fact, in order to have a descent datum with these transition maps it is necessary and sufficient to define an isomorphism $\varphi_{13}$ that for each $1 \leq i < j \leq 3$ satisfies $$\varphi_{13}\big|_{Z_{123}^{(ij)}}=\varphi_{23}^{(ij)}\big|_{Z_{123}^{(ij)}} \varphi_{12}^{(ij)}\big|_{Z_{123}^{(ij)}},$$
where $Z_{123}^{(ij)}=Z_{1} \times_{\mathbb{A}^3} Z_{2}^{(ij)} \times_{\mathbb{A}^3} Z_{3}=\text{Spec }\C[x_j]((x_k-x_j))((x_i-x_j)).$

Let $j_{13}: Z_1 \times_{\mathbb{A}^3} Z_3 =\text{Spec }\C[x_3][[x_1-x_3,x_2-x_3]]_{x_1-x_3,x_2-x_3,x_1-x_2} \rightarrow \mathbb{A}^3$ be the canonical map. Now due to the adjunction $j_{13}^{*} \vdash j_{13*}$ we know that $\varphi_{13}$ is equivalent to a map $\circ_{3}: \mathcal{V}^{\boxtimes 3} \rightarrow j_{13*} j_{13}^{*} \mathcal{O}_{\mathbb{A}^1} \boxtimes \mathcal{O}_{\mathbb{A}^{1}} \boxtimes \mathcal{V}=\tilde{\Delta}^{(3)}_{!}\mathcal{V},$ which is a $3$-OPE.

On the other hand, we also have canonical maps $\tilde{j}_{12}^{(ij)}: Z_{123}^{(ij)} \rightarrow \mathbb{A}^3$ and $\tilde{j}_{23}^{(ij)}: Z_{123}^{(ij)} \rightarrow \text{Spec }\C[x_j,x_k]((x_i-x_j)) = \mathbb{A}^1 \times \tilde{\Delta}.$ It is not difficult to prove that under the adjunction $\tilde{j}_{12}^{(ij)*} \vdash \tilde{j}_{12*}^{(ij)}$ the isomorphism $\varphi_{12}^{(ij)}\big|_{Z_{123}^{(ij)}}$ corresponds to the map $$id_{\mathcal{V}} \boxtimes \circ : \mathcal{V} \boxtimes \mathcal{V}^{\boxtimes 2} \rightarrow \mathcal{V} \boxtimes \tilde{\Delta}_{!}\mathcal{V},$$and that similarly the adjunction $\tilde{j}_{23}^{(ij)*} \vdash \tilde{j}_{23*}^{(ij)}$ turns $\varphi_{23}^{(ij)}\big|_{Z_{123}^{(ij)}}$ into the map $$\tilde{\Delta}_{ij!}(\circ): \tilde{\Delta}_{ij!} \mathcal{V}^{\boxtimes 2} \rightarrow \tilde{\Delta}_{ij!} \tilde{\Delta}_{!}\mathcal{V}.$$

Therefore, we find that the cocycle condition for some choice $1\leq i < j \leq 3$ is equivalent to the statement that the OPE $\circ$ composes nicely with itself when applied twice, starting at the $i$-th and $j$-th entries. Moreover, the existence of a single isomorphism $\varphi_{13}$ satisfying all the cocycle conditions is equivalent to the associativity of the OPE $\circ$.

If we also consider that this descent datum in $\mathcal{S}$ relative to the family $\{j_1,j_2^{(ij)},j_3\}$ must be effective (as a direct consequence of Lemma \ref{our_version_BL_descent}), we have proved the following result.

\begin{proposition}
For any $\mathcal{O}$-module $\mathcal{V}$ over $\mathbb{A}^1$, the bijection between commutative OPE's $\circ \in O_2(\{\mathcal{V},\mathcal{V}\},\mathcal{V}\})$ and descent data $(\mathcal{V} \boxtimes \mathcal{V} \big|_{U},\mathcal{O}_{\mathbb{A}^1} \boxtimes \mathcal{V} \big|_{\hat{\Delta}},\varphi)$ identifies associative OPE's with those isomorphisms $\varphi$ that allow us to construct a descent datum in $\mathcal{S}$ relative to the family $\{j_1,j_2^{(ij)},j_3\}$ which glues to an $\mathcal{O}$-module $\mathcal{V}_3$ over $\mathbb{A}^3$ that still satisfies the factorization axioms.
\end{proposition}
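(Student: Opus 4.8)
The plan is to package the geometric observations assembled above into a single equivalence, handling both implications at once through the adjunction dictionary that converts transition maps into OPE's. The key point is that, starting from $\mathcal{V}$ together with a commutative OPE $\circ$ (equivalently the gluing isomorphism $\varphi$), every ingredient of a descent problem relative to the family $\{j_1, j_2^{(ij)}, j_3\}$ has already been produced except for the single transition isomorphism $\varphi_{13}$. I would first observe that the cocycle conditions \emph{not} involving $\varphi_{13}$ — those among the pairs $\{\mathcal{W}_1, \mathcal{W}_2^{(ij)}\}$ and $\{\mathcal{W}_2^{(ij)}, \mathcal{W}_3\}$ together with the Case III permutations — involve only $\varphi$ and coordinate permutations, and hold by construction once one uses the commutativity of $\circ$ to make the permutation maps compatible with $\varphi$. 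This reduces the existence of a descent datum to finding an isomorphism $\varphi_{13}: \mathcal{W}_1|_{Z_1 \times_{\mathbb{A}^3} Z_3} \to \mathcal{W}_3|_{Z_1 \times_{\mathbb{A}^3} Z_3}$ satisfying the three remaining conditions $\varphi_{13}|_{Z_{123}^{(ij)}} = \varphi_{23}^{(ij)}|_{Z_{123}^{(ij)}}\,\varphi_{12}^{(ij)}|_{Z_{123}^{(ij)}}$, one for each $1 \leq i < j \leq 3$.

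The heart of the argument is then to translate these three conditions through the adjunctions already established. Under $j_{13}^* \vdash j_{13*}$ the sought map $\varphi_{13}$ corresponds to a $3$-OPE $\circ_3: \mathcal{V}^{\boxtimes 3} \to \tilde{\Delta}^{(3)}_!\mathcal{V}$, while under $\tilde{j}_{12}^{(ij)*} \vdash \tilde{j}_{12*}^{(ij)}$ and $\tilde{j}_{23}^{(ij)*} \vdash \tilde{j}_{23*}^{(ij)}$ the right-hand side corresponds, as computed above, to the OPE composition $\gamma(\otimes_i \delta_i)$ for the grouping picked out by $(i,j)$, namely $id_{\mathcal{V}} \boxtimes \circ$ followed by $\tilde{\Delta}_{ij!}(\circ)$, which a priori only lands in $\tilde{\Delta}_{ij!}\tilde{\Delta}_!\mathcal{V}$. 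Comparing this with $\circ_3$, which lands in the subobject $\tilde{\Delta}^{(3)}_!\mathcal{V} \subseteq \tilde{\Delta}_{ij!}\tilde{\Delta}_!\mathcal{V}$, shows that the $(i,j)$-condition can be solved exactly when that composite composes nicely. Demanding a single $\varphi_{13}$ serve all three groupings then forces the corresponding composites to coincide inside $OPE(\mathcal{V})_{\{1,2,3\}}$; using commutativity (the Case III symmetries) to identify the $(1,3)$-grouping with the remaining two, this coincidence is precisely the associativity axiom, with $\circ_3$ furnishing the common $3$-OPE in that axiom's diagram. Read in both directions, this establishes that under the bijection $\circ \leftrightarrow \varphi$ the associative OPE's correspond exactly to those $\varphi$ admitting a consistent $\varphi_{13}$, hence to those for which the descent datum exists.

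Finally, with the descent datum in hand I would appeal to Lemma \ref{our_version_BL_descent} for effectiveness. Since the cover is nested — each $Z_2^{(ij)}$ is the formal neighborhood of the divisor $\Delta_{ij}$ and $Z_3$ that of the deepest diagonal — the Beauville--Laszlo descent can be applied one diagonal divisor at a time, checking at each stage that the module in question is $(x_i - x_j)$-regular so that the hypotheses are met, thereby gluing $\mathcal{W}_1$, the $\mathcal{W}_2^{(ij)}$ and $\mathcal{W}_3$ into a single $\mathcal{O}$-module $\mathcal{V}_3$ over $\mathbb{A}^3$. It then remains to read off the factorization structure: the restrictions of $\mathcal{V}_3$ to $Z_1$, to the $Z_2^{(ij)}$ and to $Z_3$ yield the factorization isomorphisms $c_{[\pi]}$ and diagonal isomorphisms $\nu^{(\pi)}$ for the surjections out of $\{1,2,3\}$, and verifying the compatibility axioms (i)--(iii) is routine, since all these maps were built functorially from $\circ$ and $\nu$. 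I expect the main obstacle to lie in the associativity bookkeeping of the second paragraph — checking that the three $(i,j)$-cocycle conditions collapse, under commutativity, to exactly the compose-nicely-and-coincide condition of the OPE associativity axiom, with careful attention to which normally ordered expansion each composite inhabits. A secondary subtlety is justifying effectiveness in the three-fold situation from a lemma phrased for a single divisor in $\mathbb{A}^2$, which the iterated application along the flag of diagonals is designed to address.
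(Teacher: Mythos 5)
Your proposal is correct and follows essentially the same route as the paper: the same covering $\{j_1, j_2^{(ij)}, j_3\}$, the same reduction to the single missing transition map $\varphi_{13}$, the same adjunction dictionary identifying $\varphi_{13}$ with a $3$-OPE $\circ_3$ and the cocycle conditions with nice composition of $\circ$ with itself, and the same appeal to the Beauville--Laszlo lemma for effectiveness. Your closing remarks on iterating the descent lemma along the flag of diagonals are a slightly more careful treatment of a step the paper dispatches as a direct consequence of its Lemma, but the argument is the same.
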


Finally, note that once the cocycle conditions hold for the descent datum that we built over the chosen decomposition of $\mathbb{A}^3$, the same method can be applied to construct effective descent data over the analogous coverings of $\mathbb{A}^n$ for any $n \geq 4$, which glue to $\mathcal{O}$-modules $\mathcal{V}_n$ over $\mathbb{A}^n$ satisfying the factorization axioms. One may relate this to the fact that any commutative and associative binary product immediately defines a unique product of $n$-tuples for any $n\geq 3$.

Thus we obtain in the case $X=\mathbb{A}^1$ a geometric proof of the equivalence that we were interested in.

\begin{corollary}
$\mathcal{OPEA}(\mathbb{A}^1) \simeq \mathcal{FA}(\mathbb{A}^1)$.
\end{corollary}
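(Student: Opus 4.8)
The plan is to upgrade the object-level correspondence of the preceding Proposition into an equivalence of categories by constructing mutually inverse functors and checking that they respect morphisms and units. In the direction $\mathcal{FA}(\mathbb{A}^1) \to \mathcal{OPEA}(\mathbb{A}^1)$, starting from a factorization algebra $\{\mathcal{V}_I\}$ I would set $\mathcal{V} = \mathcal{V}_{\{1\}}$ and read off the binary OPE $\circ$ from the gluing isomorphism $\varphi$ of $\mathcal{V}_2$ along $\tilde{\Delta}$, exactly as described above, taking the factorization unit $\mathds{1} \in \mathcal{V}$ as the OPE unit. Commutativity of $\circ$ is forced by the symmetry of the factorization data $c$ under exchange of the two points of $\mathbb{A}^2$, while associativity follows because $\mathcal{V}_3$ exists and satisfies the factorization axioms, which by the discussion preceding the Proposition is precisely what guarantees that $\circ$ composes nicely with itself and that the two iterated $3$-OPE's agree. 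The OPE unit axiom then reads off from the factorization unit condition restricted to $\hat{\Delta} \subseteq \tilde{\Delta}$.

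In the direction $\mathcal{OPEA}(\mathbb{A}^1) \to \mathcal{FA}(\mathbb{A}^1)$, given $(\mathcal{V}, \circ, \mathds{1})$ the Proposition produces from the associativity and commutativity of $\circ$ a descent datum over the covering $\{j_1, j_2^{(ij)}, j_3\}$ of $\mathbb{A}^3$ whose effectiveness (Lemma \ref{our_version_BL_descent}) yields $\mathcal{V}_3$, and by the closing remark the analogous coverings of $\mathbb{A}^n$ produce $\mathcal{V}_n$ for every $n$. To assemble the full family $\{\mathcal{V}_I\}$ I would define, for each surjection $\pi : J \to I$, the diagonal isomorphism $\nu^{(\pi)}$ from the restriction of the descent datum to the diagonal $\Delta^{(\pi)}$, where the Tate-space completions collapse so that $\mathcal{V}_J$ restricts to $\mathcal{V}_I$, and the factorization isomorphism $c_{[\pi]}$ from the transition maps of the descent datum over $U^{(\pi)}$, where the glued sheaf factors as the external product of the $\mathcal{V}_{J_i}$. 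The horizontal section $\mathds{1}$ supplies the factorization unit, and the $\mathcal{D}_{\mathbb{A}^1}$-module structure carries over automatically, in keeping with the earlier observation that this structure is a consequence of the axioms rather than part of the gluing data.

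To see the functors are mutually inverse I would invoke the uniqueness clause of the Beauville–Laszlo lemma: each glued $\mathcal{O}$-module $\mathcal{V}_n$ is determined up to unique isomorphism by its restrictions together with the gluing isomorphism, so extracting $\circ$ from the $\mathcal{V}_2$ built out of a given $\circ$ returns the original OPE, and regluing the $\mathcal{V}_n$ from a given factorization algebra recovers its original sheaves. Functoriality is routine: a morphism of OPE algebras is a $\mathcal{D}_{\mathbb{A}^1}$-module map intertwining the products and units, and applying the pullback functors $j^*$ and $\hat{j}^*$ componentwise over each stratum of the covering shows that it induces compatible maps on the descent data, hence a morphism of the glued families; the reverse assignment is induced by restriction to $\mathcal{V}_{\{1\}}$.

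I expect the main obstacle to be the bookkeeping required to verify the compatibility conditions (i)–(iii) in the definition of factorization algebra \emph{uniformly} for all composable surjections $K \xrightarrow{\pi'} J \xrightarrow{\pi} I$, rather than only in the low-rank cases $|I| \leq 3$ treated explicitly. The subtlety is that the coverings of $\mathbb{A}^{|J|}$ and $\mathbb{A}^{|I|}$ and their intermediate diagonals must be matched so that the isomorphisms $\nu^{(\pi)}$ and $c_{[\pi]}$ assembled from local OPE data glue coherently across all strata; this amounts to checking that the entire tower of descent data is functorial in $\pi$, which in turn rests on the associativity of $\circ$ propagating coherently — precisely the content that lets a commutative associative binary operation extend uniquely to every arity.
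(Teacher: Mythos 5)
Your proposal is correct and takes essentially the same route as the paper: the corollary is obtained there as a direct consequence of the preceding Proposition (binary OPE $\leftrightarrow$ descent datum, associativity $\leftrightarrow$ cocycle condition over $\mathbb{A}^3$) together with the closing remark that the same gluing method extends to the analogous coverings of $\mathbb{A}^n$ for all $n \geq 4$, since a commutative associative binary product determines unique $n$-ary products. The additional detail you supply --- functoriality, mutual inverseness via the uniqueness clause of the Beauville--Laszlo lemma, and the assembly of the full system of isomorphisms $\nu^{(\pi)}$ and $c_{[\pi]}$ --- is precisely the bookkeeping the paper leaves implicit, and you correctly flag it as the remaining (routine but nontrivial) verification.
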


\section{Generalization to factorization monoids: OPE monoids}\label{OPE_monoids}

In this section we will apply the arguments of the previous section in order to understand factorization monoids in terms of their gluing data with respect to the same coverings of their base spaces.

The only difference between factorization monoids and algebras is that we consider families of ind-schemes instead of families of $\mathcal{O}$-modules over the powers of the base curve. Therefore, their gluing data must be considered in a different fibered category over $\mathcal{C}$.

Let $\mathcal{S}' \rightarrow \mathcal{C}$ be the fibered category where for each $Z\in \mathcal{C}$, $\mathcal{S}'_{Z}$ is the category of those ind-schemes $\mathcal{G}=\varinjlim \mathcal{G}^{(n)}$ over $Z$ such that each $\mathcal{G}^{(n)}$ can be obtained by gluing affine schemes over $Z$ that belong to $\mathcal{C}$. Given a base change $f: Z_1 \rightarrow Z_2$, the pullback $f^{*}:\mathcal{S}'_{Z_{2}} \rightarrow \mathcal{S}'_{Z_{1}}$ is induced by $f^{*}\mathcal{G}=\mathcal{G} \times_{Z_{2}} Z_{1}$ for each $\mathcal{G} \in \mathcal{C}$ over $Z_{2}$. A key difference between $\mathcal{S}$ and $\mathcal{S}'$ is the fact that in $\mathcal{S}'$ the pullback $f^{*}$ is \textit{right} adjoint to the pushforward $f_{*}$.

We need to introduce non-linear counterparts of OPE algebras in order to compare them with factorization monoids. The definitions of $\hat{\Delta}$ and $\tilde{\Delta}$ from the previous section can be generalized straightforwardly to $\mathbb{A}^{I}$: $\hat{\Delta}^{(I)}$ will be the affinization of the completion of $\mathbb{A}^I$ with respect to the principal diagonal $\Delta^{(I)}$ in $\mathbb{A}^I$ and $\tilde{\Delta}^{(I)}=\hat{\Delta}^{(I)} \times_{\mathbb{A}^{I}} U^{(I)}$. We have canonical morphisms $\hat{\Delta}^{(I)} \xrightarrow{\hat{j}^{(I)}} \mathbb{A}^I$ and $\tilde{\Delta}^{(I)} \xrightarrow{\tilde{j}^{(I)}} \mathbb{A}^I$. 



\begin{definition}
Let $\mathcal{G}$ be an ind-scheme over $\mathbb{A}^{1}$. A \textit{non-linear I-OPE} of $\mathcal{G}$ for any $I \in fSet$ is an element of the set $$OPE(\mathcal{G})_{I}=\text{Hom}_{\mathcal{S}'_{\mathbb{A}^{I}}}\left(\tilde{j}^{(I)}_{*}\tilde{j}^{(I)*}\mathcal{G}^{I} , \mathbb{A}^{I} \times_{\mathbb{A}^{1}} \mathcal{G}\right).$$
\end{definition}

For a surjection $J \xrightarrow{\pi} I$ in $fSet$, we define $\tilde{j}^{(\pi)}:=\Delta^{(\pi)}\tilde{j}^{(I)}:\tilde{\Delta}^{(I)} \rightarrow X^{J}$. Now given non-linear OPE's $\gamma \in OPE(\mathcal{G})_{I}$ and $\{\delta_{i} \in OPE(\mathcal{G})_{J_{i}}\}_{i \in I}$, we may define their \textit{composition}

$$\gamma(\{\delta_{i}\}_{i\in I})\in \text{Hom}_{\mathcal{S}'_{\mathbb{A}^{I}}}( \tilde{j}^{(\pi)}_{*}\tilde{j}^{(\pi)*}\prod_{i \in I} \tilde{j}_{*}^{(J_i)}\tilde{j}^{(J_i)*}\mathcal{G}^{J_{i}}, \mathbb{A}^{J} \times_{\mathbb{A}^{1}} \mathcal{G})$$ 
as
\begin{align*}
\tilde{j}^{(\pi)}_{*}\tilde{j}^{(\pi)*}&\prod_{i} \tilde{j}_{*}^{(J_i)}\tilde{j}^{(J_i)*} \mathcal{G}^{J_i} \xrightarrow{\tilde{j}^{(\pi)}_{*}\tilde{j}^{(\pi)*}\prod_{i} \delta_i} \tilde{j}^{(\pi)}_{*}\tilde{j}^{(\pi)*}\prod_{i} \mathbb{A}^{J_{i}} \times_{\mathbb{A}^1}\mathcal{G} =\\
&\mathbb{A}^{J} \times_{\mathbb{A}^{I}} (\tilde{j}^{(I)}_{*}\tilde{j}^{(I)*}\mathcal{G}^{I}) \xrightarrow{id \times_{\mathbb{A}^{I}} \gamma } \mathbb{A}^{J} \times_{\mathbb{A}^{I}} \mathbb{A}^{I} \times_{\mathbb{A}^{1}} \mathcal{G} = \mathbb{A}^{J} \times_{\mathbb{A}^{1}} \mathcal{G}.
\end{align*}
As in the linear case, we say that these non-linear OPE's \textit{compose nicely} if $\gamma(\{\delta_i\})$ extends to a non-linear OPE in $OPE(\mathcal{G})_{J}$ via the embedding $$\tilde{j}^{(\pi)}_{*}\tilde{j}^{(\pi)*}\prod_{i} \tilde{j}_{*}^{(J_i)}\tilde{j}^{(J_i)*}\mathcal{G}^{J_i} \hookrightarrow \tilde{j}^{(J)}_{*} \tilde{j}^{(J)*} \prod_{i} \mathcal{G}^{J_i}=\tilde{j}^{(J)}_{*} \tilde{j}^{(J)*}\mathcal{G}^{J}.$$

Now we can present the non-linear version of OPE algebras.

\begin{definition}
An \textit{OPE monoid} over $\mathbb{A}^{1}$ is an ind-scheme $\mathcal{G}$ over $\mathbb{A}^{1}$ together with a non-linear binary OPE $$\bullet : \tilde{j}_{*}\tilde{j}^{*} \mathcal{G} \times \mathcal{G} \rightarrow \mathbb{A}^{2} \times_{\mathbb{A}^1} \mathcal{G}$$
in $OPE(\mathcal{G})_{\{1,2\}}$ and a distinguished section $\mathds{1} : \mathbb{A}^1 \rightarrow \mathcal{G}$, such that
\begin{enumerate}
    \item \textit{$\bullet$ is associative}, that is, the non-linear OPE's $(\bullet,\{\bullet,id\})$ and $(\bullet,\{id,\bullet\})$ compose nicely and both compositions yield the same non-linear $3$-OPE $\bullet_{3} \in OPE(\mathcal{G})_{\{1,2,3\}}$. In other words, there must exist a morphism $\bullet_{3}$ such that the following diagram commutes

\begin{minipage}{\textwidth}
\hspace{-0.3cm}
\begin{tikzcd}[row sep=0.5cm, column sep=0.6cm,every cell/.append style={align=center}]
$\tilde{j}_{13*}\tilde{j}_{13}^{*} \left( \mathcal{G} \times (\tilde{j}_{*} \tilde{j}^{*} \mathcal{G} \times \mathcal{G}) \right)$ \arrow[rrr,"\tilde{j}_{13*}\tilde{j}_{13}^{*} (id \times \bullet)"] \arrow[d,hook] & & &
\begin{tabular}{c}
$\tilde{j}_{13*}\tilde{j}_{13}^{*} \left( \mathcal{G} \times (\mathbb{A}^{2} \times_{\mathbb{A}^{1}} \mathcal{G}) \right)$ \\
$\simeq \mathbb{A}^{3} \times_{\mathbb{A}^{2}} (\tilde{j}_{*} \tilde{j}^{*} \mathcal{G} \times \mathcal{G})$ 
\end{tabular}
\arrow[d," id \times_{\mathbb{A}^{2}} \bullet"]\\
$\tilde{j}_{*}^{(3)}\tilde{j}^{(3)*} \mathcal{G} \times \mathcal{G} \times \mathcal{G}$ \arrow[rrr,dashed,"\bullet_{3}"] & & & $ \mathbb{A}^{3} \times_{\mathbb{A}^{1}} \mathcal{G}$  \\
$\tilde{j}_{23*}\tilde{j}_{23}^{*} \left( ( \tilde{j}_{*} \tilde{j}^{*} \mathcal{G} \times \mathcal{G} ) \times \mathcal{G} \right)$ \arrow[rrr,"\tilde{j}_{23*}\tilde{j}_{23}^{*} (\bullet \times id)",'] \arrow[u,hook'] & & &
\begin{tabular}{c}
$\tilde{j}_{23*}\tilde{j}_{23}^{*} \left( (\mathbb{A}^{2} \times_{\mathbb{A}^{1}} \mathcal{G}) \times \mathcal{G} \right)$ \\
$\simeq \mathbb{A}^{3} \times_{\mathbb{A}^{2}} (\tilde{j}_{*} \tilde{j}^{*} \mathcal{G} \times \mathcal{G})$ 
\end{tabular}
\arrow[u,"id \times_{\mathbb{A}^{2}} \bullet",'] \\
\end{tikzcd}
\end{minipage}
where $\tilde{j}^{(3)}=\tilde{j}^{(\{1,2,3\})}$ and $\tilde{j}_{ij}=\tilde{j}^{(\pi_{ij})}$ for the surjection $\pi_{ij}$ that identifies $i=j$ in $\{1,2,3\}$.

    \item \textit{$\bullet$ is commutative}, that is, it is fixed by the ``transposition of coordinates symmetry of $OPE(\mathcal{G})_{\{1,2\}}$.
    \item \textit{$\mathds{1}$ is a unit for $\bullet$}, in the sense that the following diagram commutes
    
\begin{minipage}{\textwidth}
\begin{tikzcd}[row sep=0.5cm, column sep=1.5cm,every cell/.append style={align=center}]
\begin{tabular}{c}
$\tilde{j}_{*}\tilde{j}^{*}\mathcal{G} \times \mathbb{A}^{1}$\\
$\simeq \mathcal{G} \times_{\mathbb{A}^{1}} \tilde{\Delta}$
\end{tabular}
\arrow[r,"\tilde{j}_{*}\tilde{j}^{*} (id \times \mathds{1})"] \arrow[dr,"id \times_{\mathbb{A}^{1}} \tilde{j}",']& $\tilde{j}_{*}\tilde{j}^{*} \mathcal{G} \times \mathcal{G} \arrow[d,"\bullet"]$ &
\begin{tabular}{c}
$\tilde{j}_{*}\tilde{j}^{*}\mathbb{A}^{1} \times \mathcal{G}$\\
$\simeq \mathcal{G} \times_{\mathbb{A}^{1}} \tilde{\Delta}$
\end{tabular}
\arrow[l,"\tilde{j}_{*}\tilde{j}^{*}(\mathds{1} \times id)",'] \arrow[dl,"id \times_{\mathbb{A}^{1}} \tilde{j}"] \\
& $\mathcal{G} \times_{\mathbb{A}^{1}} \mathbb{A}^{2}$ &
\end{tikzcd}
\end{minipage}
\end{enumerate}
\end{definition}

Now starting from a factorization monoid $\{\mathcal{G}_{I}\}$ over $\mathbb{A}^1$, and denoting $\mathcal{G}=\mathcal{G}_{\{1\}}$, $\mathcal{G}_2=\mathcal{G}_{\{1,2\}}$ and $\mathcal{G}_3=\mathcal{G}_{\{1,2,3\}}$, we can build a transition map over $\tilde{\Delta}$ using its factorization isomorphisms as
\begin{equation*}
 \psi: \tilde{j}^{*} \mathcal{G} \times \mathcal{G} \xrightarrow{\simeq} \tilde{j}^{*} \mathcal{G}_2 \xrightarrow{\simeq} \tilde{j}^{*} \mathbb{A}^{2} \times_{\mathbb{A}^{1}} \mathcal{G}.
\end{equation*}

Due to the adjunction $\tilde{j}_{*} \vdash \tilde{j}^{*}$, $\varphi$ corresponds to a non-linear binary OPE $\bullet : \tilde{j}_{*}\tilde{j}^{*} \mathcal{G} \times \mathcal{G} \rightarrow \mathcal{G} \times_{\mathbb{A}^1} \mathbb{A}^{2}$ which is commutative. The ind-scheme $\mathcal{G}_2$ can be recovered from $\mathcal{G}$ and $\circ$ by gluing the descent datum $(j^{*} \mathcal{G} \times \mathcal{G},\hat{j}^{*} \mathbb{A}^{2} \times_{\mathbb{A}^{1}} \mathcal{G}, \psi)$ in $\mathcal{S}'$ relative to the family $\{U \xrightarrow{j} \mathbb{A}^2, \hat{\Delta} \xrightarrow{\hat{j}} \mathbb{A}^2\}$. The fact that this descent data is effective is a consequence of Lemma \ref{our_version_BL_descent}, following the same arguments as in \cite{M-B} in order to glue affine schemes in $\mathcal{C}$ and ind-schemes in $\mathcal{S}'$ over them.

Now we need to define a descent datum in $\mathcal{S}'$ relative to the family $\{j_{1},j_{2}^{(ij)},j_{3}\}$ for gluing $\mathcal{G}_{3}$. Let us define

\begin{itemize}
    \item $\mathcal{F}_{1}=j_{1}^{*}\mathbb{A}^{1} \times \mathbb{A}^{1} \times \mathcal{G} \in \mathcal{S}'_{Z_{1}}$.
    \item $\mathcal{F}_{2}^{(ij)}=j_{2}^{(ij)*}\mathbb{A}^{1} \times \mathcal{G} \times \mathcal{G} \in\mathcal{S}'_{Z_{2}^{(ij)}}$ for each $1 \leq i < j \leq 3$.
    \item $\mathcal{F}_{3}=j_{3}^{*}\mathcal{G} \times \mathcal{G} \times \mathcal{G} \in\mathcal{S}'_{Z_3}$.
\end{itemize}

The gluing maps between them are defined very similarly as in the previous section, the main ones being
\begin{equation*}
\begin{cases}
\psi_{21}^{(ij)}= j_{12}^{(ij)*} (id_{\mathbb{A}^{1}} \times \psi) : \mathcal{F}_{2}^{(ij)} \Big|_{Z_{12}^{(ij)}} \rightarrow \mathcal{F}_{1}^{(ij)} \Big|_{Z_{12}^{(ij)}}\\
\psi_{32}^{(ij)}= j_{23}^{(ij)*} (\psi \times id_{\mathcal{G}}): \mathcal{F}_{3}^{(ij)} \Big|_{Z_{23}^{(ij)}} \rightarrow \mathcal{F}_{2}^{(ij)} \Big|_{Z_{23}^{(ij)}}
\end{cases}
\end{equation*}

Once again, we find that the existence of a transition isomorphism between $\mathcal{F}_1$ and $\mathcal{F}_3$ satisfying the cocycle conditions is equivalent to the associativity of the non-linear OPE $\bullet$.

We can collect all these observations in the following result.

\begin{proposition}
For any ind-scheme $\mathcal{G}$ over $\mathbb{A}^1$, there is a bijection between commutative non-linear OPE's $\bullet \in OPE(\mathcal{G})_{\{1,2\}}$ and descent data $(j^{*} \mathcal{G} \times \mathcal{G},\hat{j}^{*} \mathbb{A}^{2} \times_{\mathbb{A}^{1}} \mathcal{G}, \psi)$ in $\mathcal{S}'$ relative to the family $\{U \xrightarrow{j} \mathbb{A}^2, \hat{\Delta} \xrightarrow{\hat{j}} \mathbb{A}^2\}$.

Moreover, under this correspondence the associative and commutative non-linear OPE's are identified with those isomorphisms $\psi$ that allow us to construct a descent datum of ind-schemes relative to the family $\{j_1,j_2^{(ij)},j_3\}$ which glues to an ind-scheme $\mathcal{G}_3$ over $\mathbb{A}^3$ satisfying the factorization axioms.
\end{proposition}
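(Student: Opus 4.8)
The plan is to transpose the argument of Section~\ref{OPE_algebras_are_gluing_data} to the fibered category $\mathcal{S}'$, the only structural change being that in $\mathcal{S}'$ the pullback is right adjoint to the pushforward, so that every arrow in the descent diagrams reverses direction. For the first assertion I would argue by pure adjunction: a descent datum of the prescribed shape is nothing but the gluing isomorphism $\psi$ over the single nontrivial intersection $U\times_{\mathbb{A}^2}\hat{\Delta}=\tilde{\Delta}$, and the adjunction $\tilde{j}_*\vdash\tilde{j}^*$ identifies such isomorphisms $\tilde{j}^*(\mathcal{G}\times\mathcal{G})\to\tilde{j}^*(\mathbb{A}^2\times_{\mathbb{A}^1}\mathcal{G})$ bijectively with non-linear binary OPEs $\bullet\in OPE(\mathcal{G})_{\{1,2\}}$. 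The remaining intersections $U\times_{\mathbb{A}^2}U=U$ and $\hat{\Delta}\times_{\mathbb{A}^2}\hat{\Delta}=\hat{\Delta}$ impose no condition and the triple intersections are degenerate, so no cocycle condition enters here. I would then check that the $S_2$-symmetry of $\psi$ (its compatibility with transposing the two factors) corresponds precisely to the commutativity of $\bullet$, so that the bijection restricts to commutative OPEs on the one side and to those descent data whose glued $\mathcal{G}_2$ carries a genuine symmetric factorization isomorphism on the other.

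For the second assertion I would start from a commutative $\bullet$ (equivalently $\psi$) and assemble the candidate descent datum $(\mathcal{F}_1,\mathcal{F}_2^{(ij)},\mathcal{F}_3)$ over the fixed covering $\{Z_1,Z_2^{(ij)},Z_3\}$ of $\mathbb{A}^3$, with the transition maps $\psi_{21}^{(ij)}$, $\psi_{32}^{(ij)}$ and the Case III permutations all built from $\psi$. The only transition left undetermined is the one between $\mathcal{F}_3$ and $\mathcal{F}_1$ over $Z_1\times_{\mathbb{A}^3}Z_3$; by the adjunction $\tilde{j}_{13*}\vdash\tilde{j}_{13}^*$ this is the same datum as a non-linear $3$-OPE $\bullet_3$.

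Restricting everything to the triple intersection $Z_{123}^{(ij)}=\text{Spec }\C[x_j]((x_k-x_j))((x_i-x_j))$ and using the adjunctions $\tilde{j}_{12*}^{(ij)}\vdash\tilde{j}_{12}^{(ij)*}$ and $\tilde{j}_{23*}^{(ij)}\vdash\tilde{j}_{23}^{(ij)*}$, I would identify $\psi_{21}^{(ij)}$ with $id_{\mathcal{G}}\times\bullet$ and $\psi_{32}^{(ij)}$ with $\tilde{\Delta}_{ij!}(\bullet)$, exactly as in the linear case. The cocycle condition $\psi_{31}=\psi_{21}^{(ij)}\psi_{32}^{(ij)}$ for a fixed pair $(ij)$ then says precisely that $\bullet$ composes nicely in the $(ij)$-entries, while the existence of a single $\bullet_3$ realizing all three cocycle conditions at once is equivalent to the commutative diagram defining associativity in the definition of OPE monoid. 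Effectiveness of the resulting descent datum, and hence the existence of the glued ind-scheme $\mathcal{G}_3$ over $\mathbb{A}^3$, I would deduce from Lemma~\ref{our_version_BL_descent} applied level by level along the filtration $\mathcal{G}=\varinjlim\mathcal{G}^{(n)}$: gluing the affine pieces in $\mathcal{C}$ and then the ind-schemes over them as in \cite{M-B}, and reading off the diagonal and factorization isomorphisms of $\mathcal{G}_3$ from the construction to verify the factorization axioms. An entirely analogous covering of $\mathbb{A}^n$ then yields $\mathcal{G}_n$ for all $n$.

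I expect the principal difficulty to be the bookkeeping of variances. Since $\tilde{j}^*$ is right adjoint to $\tilde{j}_*$ in $\mathcal{S}'$, the transition maps run $\mathcal{F}_3\to\mathcal{F}_2^{(ij)}\to\mathcal{F}_1$, opposite to the linear picture, and I must make sure that the two equivalences ``cocycle condition $\Leftrightarrow$ composes nicely'' and ``global $\bullet_3\Leftrightarrow$ associativity'' survive this reversal without direction errors, in particular that the associativity diagram of the OPE monoid is exactly the reversed form of the linear one. A secondary technical point is the inductive passage in Lemma~\ref{our_version_BL_descent}: one must confirm that the Beauville--Laszlo gluing commutes with the colimit $\varinjlim\mathcal{G}^{(n)}$ and preserves the closed embeddings $\mathcal{G}^{(n)}\hookrightarrow\mathcal{G}^{(m)}$, so that the glued object genuinely lies in $\mathcal{S}'_{\mathbb{A}^3}$.
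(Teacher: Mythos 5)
Your proposal follows essentially the same route as the paper: the same adjunction $\tilde{j}_*\vdash\tilde{j}^*$ argument identifying $\psi$ over the single nontrivial intersection $\tilde{\Delta}$ with a binary non-linear OPE, the same covering $\{Z_1,Z_2^{(ij)},Z_3\}$ of $\mathbb{A}^3$ with transition maps running in the reversed direction to account for the pullback being right adjoint to the pushforward in $\mathcal{S}'$, the same identification of the cocycle conditions with nice composition (and of a global $\bullet_3$ with associativity), and effectiveness via Lemma \ref{our_version_BL_descent} combined with the gluing arguments of \cite{M-B}. The difficulties you flag (variance bookkeeping and compatibility of the Beauville--Laszlo gluing with the colimit presentation of $\mathcal{G}$) are precisely the points the paper also treats by reference rather than in detail, so your write-up matches the paper's proof in both structure and level of rigor.
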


As a direct consequence of this proposition, we obtain the following statement, which is arguably the most important result in this work.

\begin{theorem}\label{equivalence_factorization_OPE_monoids}
The category of OPE monoids over $\mathbb{A}^{1}$ is equivalent to the category of factorization monoids over $\mathbb{A}^{1}$.
\end{theorem}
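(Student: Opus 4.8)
The plan is to upgrade the objectwise correspondence of the preceding Proposition into an equivalence of categories, following the same strategy that yielded the Corollary $\mathcal{OPEA}(\mathbb{A}^1) \simeq \mathcal{FA}(\mathbb{A}^1)$ in the previous section, but carried out inside the fibered category $\mathcal{S}'$ of ind-schemes rather than $\mathcal{S}$. First I would make the two functors explicit. Given a factorization monoid $\{\mathcal{G}_I\}$, I set $\mathcal{G} = \mathcal{G}_{\{1\}}$ and read off the transition map $\psi$ from the factorization and diagonal isomorphisms; by the adjunction $\tilde{j}_* \vdash \tilde{j}^*$ this yields a commutative binary non-linear OPE $\bullet$, while the factorization unit $\mathds{1}^{(1)}$ supplies the distinguished section $\mathds{1}$. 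Commutativity comes from the symmetry of the factorization structure under permutation of coordinates, the unit diagram is precisely the unit axiom of the factorization monoid restricted to $\tilde{\Delta}$, and associativity is the content of the $\mathcal{G}_3$ compatibility: the coherence of the factorization isomorphisms over the covering of $\mathbb{A}^3$ forces $(\bullet,\{\bullet,\mathrm{id}\})$ and $(\bullet,\{\mathrm{id},\bullet\})$ to compose nicely to the same $\bullet_3$. This produces an OPE monoid, and the assignment is functorial since a morphism of factorization monoids restricts to a morphism $\mathcal{G}\to\mathcal{G}'$ compatible with all the gluing data.

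Conversely, given an OPE monoid $(\mathcal{G},\bullet,\mathds{1})$, I would invoke the Proposition directly: the associated $\psi$ glues the effective descent datum to $\mathcal{G}_2$, associativity provides the cocycle condition that glues $\mathcal{G}_3$, and the same construction applied to the analogous coverings of $\mathbb{A}^n$ produces $\mathcal{G}_n$ for every $n$. The cocycle conditions for these higher coverings are again consequences of the single associativity of $\bullet$, in exactly the way a commutative associative binary product determines a unique $n$-ary product. Re-indexing by an arbitrary $I \in fSet$ and using the surjections $\pi\colon J\twoheadrightarrow I$ to define the diagonal isomorphisms $\nu^{(\pi)}$ (from the diagonal restrictions of the glued objects) and the factorization isomorphisms $c_{[\pi]}$ (from the transition maps on the relevant intersections) then assembles the family into a factorization space, with unit $\mathds{1}^{(I)}$ induced from $\mathds{1}$.

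The main obstacle, as in the linear case, is not the existence of each $\mathcal{G}_I$ but the verification that the resulting $\nu^{(\pi)}$ and $c_{[\pi]}$ satisfy the coherence conditions (i)--(iii) of a factorization space together with the unit axiom of a factorization monoid. These should follow by induction from the $n=3$ case, since every higher compatibility is built out of instances of the associativity and commutativity of $\bullet$ together with the cocycle condition for the gluings; the subtlety to watch is that in $\mathcal{S}'$ the pullback $f^*$ is \emph{right} adjoint to $f_*$ (opposite to $\mathcal{S}$), so the directions of all adjunction morphisms and the variance of the gluing maps must be tracked accordingly throughout.

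Finally, I would check that the two functors are quasi-inverse. The Proposition's bijection shows that $\mathcal{G}$ together with $\bullet$ determines the whole family $\{\mathcal{G}_I\}$ up to canonical isomorphism, the uniqueness being supplied by Lemma \ref{our_version_BL_descent}; hence the round-trip starting from a factorization monoid recovers it up to natural isomorphism, and the round-trip starting from an OPE monoid recovers $(\mathcal{G},\bullet,\mathds{1})$ on the nose. Since each $\mathcal{G}_I$ is reconstructed functorially from the gluing data, morphisms match up in both directions, so the functors are fully faithful and essentially surjective, yielding the claimed equivalence.
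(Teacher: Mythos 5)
Your proposal is correct and follows essentially the same route as the paper: both arguments lean on the preceding Proposition for the $\mathcal{G}_2$ and $\mathcal{G}_3$ gluings, extend to $\mathcal{G}_I$ for $|I|\geq 4$ over the analogous coverings of $\mathbb{A}^I$ with the cocycle conditions holding because the associative, commutative $\bullet$ determines a unique iterated $|I|$-fold product, and observe that $\mathds{1}$ extends uniquely to a compatible family $\mathds{1}^{(I)}$ whose unit axiom matches that of the factorization monoid. Your additional remarks on functoriality, the quasi-inverse check via the uniqueness in Lemma \ref{our_version_BL_descent}, and the reversed adjunction $f_* \vdash f^*$ in $\mathcal{S}'$ merely spell out details the paper leaves implicit.
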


\begin{proof}
It remains only to observe that given an OPE monoid $(\mathcal{G},\bullet,\mathds{1})$, the ind-schemes $\mathcal{G}_I$ over $\mathbb{A}^I$ for each $|I| \geq 4$ can be glued from similar descent data relative to the analogous decomposition of $\mathbb{A}^I$, and the cocycle conditions will always hold since the original ``product'' $\bullet$ is associative and commutative, so it cannot give rise to different iterated $|I|$-products. We should also note that any section $\mathds{1}: \mathbb{A}^1 \rightarrow \mathcal{G}$ defines a unique family of sections $\mathds{1}^{(I)}: \mathbb{A}^I \rightarrow \mathcal{G}_I$ compatible with the diagonal and factorization axioms such that $\mathds{1}^{(\{1\})}=\mathds{1}$, and that $\mathds{1}$ is a unit for $\circ$ if and only if the family $\{\mathds{1}^{(I)}\}$ is a unit for the factorization space $\{\mathcal{G}_{I}\}$.
\end{proof}

\section{The translation equivariant setting: vertex ind-schemes}\label{vertex_ind-schemes}

We would like to have a deeper understanding of OPE monoids over $\mathbb{A}^1$ in the translation equivariant case. Since the fibers at any point of a $\mathbb{G}_a$-equivariant OPE algebra over $\mathbb{A}^1$ are vertex algebras, the fibers of a $\mathbb{G}_a$-equivariant OPE monoid over $\mathbb{A}^1$ will be their non-linear counterparts, which we shall call \textit{vertex ind-schemes}.

Let us state precisely what the translation equivariance means in this scenario.

\begin{definition}
An ind-scheme $\mathcal{G}$ over $\mathbb{A}^1$ is \textit{$\mathbb{G}_a$-equivariant} if there exists an isomorphism $\psi: p^{*}\mathcal{G} \rightarrow a^{*}\mathcal{G}$ of ind-schemes over $\mathbb{G}_a \times \mathbb{A}^1$ satisfying the same compatibilities as for $\mathbb{G}_a$-equivariant $\mathcal{O}_{\mathbb{A}^{1}}$-modules. If in addition $\mathcal{G}$ is equipped with a non-linear binary OPE $\bullet$ and a unit section $\mathds{1}$ which respect the $\mathbb{G}_a$-action and turn $\mathcal{G}$ into an OPE monoid, then we say that $(\mathcal{G},\bullet,\mathds{1})$ is $\mathbb{G}_a$-equivariant.
\end{definition}

Given an ind-scheme $G$ (over Spec $\C$), we can define a $\mathbb{G}_a$-equivariant ind-scheme over $\mathbb{A}^1$ as $\mathcal{G}=G \times \mathbb{A}^1$. Moreover, any $\mathbb{G}_a$-equivariant $\mathcal{G}$ over $\mathbb{A}^1$ is isomorphic to $G \times \mathbb{A}^1$, where $G$ is its fiber at $0$ (or at any other point of $\mathbb{A}^1$, due to the equivariance).

In order to study how the non-linear OPE $\bullet$ of a $\mathbb{G}_a$-equivariant OPE monoid over $\mathbb{A}^1$ can be expressed in terms of its fibers, we will introduce the following definition.

\begin{definition}
A \textit{vertex ind-scheme} is an ind-scheme $G$ with a distinguished point $1 \in G$ (i.e., a morphism $1: \textup{Spec }\C \rightarrow G$) and a morphism of ind-schemes
\begin{align}\label{vertex_ind-scheme_product}
\bigcdot : G \times G \times D^{*} \rightarrow G,   
\end{align}
where $D^{*}=\text{Spec }\C((t))$ is the punctured disk, satisfying the following axioms:

\begin{enumerate}
    \item \textit{$\bigcdot$ is associative}, that is, there exists a morphism $\bigcdot_{3}$ such that the following diagram commutes.

\begin{minipage}{\textwidth}
\hspace{-2cm}
\begin{tikzcd}[row sep=0.5cm, column sep=1cm,every cell/.append style={align=center}]
$G \times G \times G \times \text{Spec }\C((s)) \times \text{Spec }\C((t))$ \arrow[d,hook'] \arrow[r,"id \times \bigcdot \times id"] & $G \times G \times \text{Spec }\C((t))$ \arrow[d,"\bigcdot",']\\
  $G \times G \times G \times \text{Spec }\C[[s,t]]_{s,t,s-t}$   \arrow[r,dashed,"\bigcdot_{3}"] & $G$  \\
$G \times G \times \text{Spec }\C((t)) \times G  \times \text{Spec }\C((s-t))$
 \arrow[u,hook] \arrow[r,"\bigcdot \times id \times id ",'] & $G \times G \times \text{Spec }\C((s-t)) \arrow[u,"\bigcdot"]$
\end{tikzcd}
\end{minipage}

    \item \textit{$\bigcdot$ is commutative}, i.e. invariant under transposition of its entries.
    \item \textit{$1$ is a unit for $\bigcdot$}, which means that the following diagram commutes
    
\begin{minipage}{\textwidth}
\begin{tikzcd}[row sep=0.5cm, column sep=1.7cm,every cell/.append style={align=center}]
$G \times D^{*}$
\arrow[r,"id_G \times 1 \times id_{D^{*}}"] \arrow[dr]& $G \times G \times D^{*}$ \arrow[d,"\bigcdot"] &
$G \times D^{*}$ \arrow[l,"1 \times id_G \times id_{D^{*}}",'] \arrow[dl] \\
& $G$ &
\end{tikzcd}
\end{minipage}
\end{enumerate}
\end{definition}

Note that the product of a vertex ind-scheme can also be written as a map
\begin{equation}\label{vertex_ind-scheme_product_2nd}
G \times G \rightarrow \text{Hom }(D^{*},G)=LG,
\end{equation}
where $LG$ is the \textit{loop space} of $G$. But when $G$ is not an affine scheme there is no natural ind-scheme structure in its loop space (see \cite[Sect.~11.3.3]{FBZ}), and thus such a product would not make sense as a morphism of ind-schemes. Since this problem does not arise with the presentation (\ref{vertex_ind-scheme_product}), we have chosen it for our definition, although it is convenient to keep in mind the alternative presentation (\ref{vertex_ind-scheme_product_2nd}) when comparing vertex ind-schemes with their linear counterparts, vertex algebras.

The definition presented above has been obtained by resticting the definition of OPE monoid over $\mathbb{A}^{1}$ to one of its fibers. Thus in the $\mathbb{G}_{a}$-equivariant case we have the following result.

\begin{proposition}\label{equivariant_OPE_monoids_are_vertex_ind-schemes}
The category of $\mathbb{G}_{a}$-equivariant OPE monoids over $\mathbb{A}^{1}$ is equivalent to the category of vertex ind-schemes, via the functor which restricts to the fiber at zero.
\end{proposition}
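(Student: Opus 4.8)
The plan is to exhibit the fiber-at-zero functor as an equivalence by constructing an explicit quasi-inverse and checking that the OPE-monoid structure and the vertex-ind-scheme structure determine one another. First I would record the underlying equivalence of objects: by the trivialization noted above, every $\mathbb{G}_a$-equivariant ind-scheme $\mathcal{G}$ over $\mathbb{A}^{1}$ is canonically isomorphic to $G \times \mathbb{A}^{1}$, where $G$ is its fiber at $0$, and conversely $G \mapsto G \times \mathbb{A}^{1}$ (with its tautological equivariant structure) produces such a $\mathcal{G}$; these assignments are mutually quasi-inverse, exactly as in the $\mathcal{O}$-module case of Theorem \ref{translation_equiv_chiral_are_vertex}(1). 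It then remains to match the three pieces of extra structure --- the binary OPE $\bullet$, the unit section $\mathds{1}$, and their axioms --- with the product $\bigcdot$, the point $1$, and the axioms of a vertex ind-scheme.

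For the product, I would use the adjunction $\tilde{j}_{*} \vdash \tilde{j}^{*}$ to rewrite $\bullet$ as an isomorphism $\psi \colon \tilde{j}^{*}\mathcal{G} \times \mathcal{G} \to \tilde{j}^{*}(\mathbb{A}^{2} \times_{\mathbb{A}^{1}} \mathcal{G})$ over $\tilde{\Delta} = \text{Spec}\,\C[y]((x-y))$. Under the trivialization $\mathcal{G} \simeq G \times \mathbb{A}^{1}$, translation equivariance forces $\psi$ to depend only on the difference coordinate $t = x-y$ and not on the base coordinate $y$; concretely, $\psi$ is the constant family (in $y$) determined by its fiber over $y=0$, where $\tilde{\Delta}$ specializes to $D^{*}=\text{Spec}\,\C((t))$. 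This fiber is precisely a morphism $\bigcdot \colon G \times G \times D^{*} \to G$, and the construction is reversible: spreading $\bigcdot$ out over the $y$-line by translation recovers an equivariant $\bullet$. This is the non-linear shadow of the linear computation $(a\cdot f(x))\circ_{\mu}(b\cdot g(y)) = Y(a,x-y)b\cdot f(x)g(y)$, with $D^{*}$ playing the role of the formal punctured disk on which the ``field'' $Y(a,z)b$ lives.

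Next I would check that the three OPE-monoid axioms restrict to the three vertex-ind-scheme axioms. Commutativity and the unit axiom are the easiest: the transposition symmetry of $OPE(\mathcal{G})_{\{1,2\}}$ restricts to transposition of the two $G$-factors, and the unit diagram for $\mathds{1}$ over $\tilde{\Delta}$, whose target is the full $\mathbb{A}^{2}$, specializes to the unit diagram for $1$ over $D^{*}$, encoding that $1 \bigcdot a$ and $a \bigcdot 1$ extend across the puncture with value $a$. For associativity, the key point is a coordinate identification: setting $s = x_{1}-x_{3}$ and $t=x_{2}-x_{3}$, the fiber at $x_{3}=0$ of the punctured formal neighborhood $\tilde{\Delta}^{(3)}$ of the principal diagonal in $\mathbb{A}^{3}$ --- whose ring is the localization $\C[x_{3}][[x_{1}-x_{3},x_{2}-x_{3}]]_{x_{1}-x_{3},x_{2}-x_{3},x_{1}-x_{2}}$ already appearing as $Z_{1}\times_{\mathbb{A}^{3}}Z_{3}$ --- becomes exactly $\text{Spec}\,\C[[s,t]]_{s,t,s-t}$, while the inner $\tilde{\Delta}$ and the intermediate objects $\tilde{\Delta}_{13}$, $\tilde{\Delta}_{23}$ specialize to the single-variable punctured disks $\text{Spec}\,\C((t))$, $\text{Spec}\,\C((s))$ and $\text{Spec}\,\C((s-t))$. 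With these identifications the associativity diagram defining $\bullet_{3}$ restricts, fiber by fiber, to the diagram defining $\bigcdot_{3}$.

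The \emph{main obstacle} I anticipate is precisely this last bookkeeping: one must verify that taking the fiber at $x_{3}=0$ commutes with the pushforward, pullback and normally ordered tensor product operations used to build the $3$-OPE composition, so that the restriction of the OPE-monoid associativity diagram \emph{is} the vertex-ind-scheme associativity diagram and not merely a map between the correct objects, and in particular that the two composition paths are identified with the two paths of the $G$-diagram. Once this is settled, fullness and faithfulness follow formally: a morphism of $\mathbb{G}_a$-equivariant OPE monoids is, by equivariance, determined by its fiber at $0$, and conversely any morphism of vertex ind-schemes extends uniquely over $\mathbb{A}^{1}$ by translation to a morphism compatible with $\bullet$ and $\mathds{1}$, giving the required inverse bijection on Hom-sets and completing the equivalence.
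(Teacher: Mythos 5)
Your proposal is correct and takes essentially the same approach as the paper --- in fact the paper gives no written proof beyond the remark that the definition of vertex ind-scheme was obtained by restricting the definition of OPE monoid over $\mathbb{A}^{1}$ to one of its fibers, so the equivalence holds by construction once $\mathcal{G}$ is trivialized as $G \times \mathbb{A}^{1}$ via equivariance. Your expansion (the trivialization, the transport of $\bullet$ to $\bigcdot$ through the fiber at zero with $\tilde{\Delta}$ specializing to $D^{*}$ and $Z_{1}\times_{\mathbb{A}^{3}}Z_{3}$ to $\textup{Spec}\,\C[[s,t]]_{s,t,s-t}$, and the formal extension/restriction argument on Hom-sets) supplies exactly the details the paper leaves implicit.
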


Given a factorization monoid $\{\mathcal{G}_{I}\}$ over a curve $X$ with unit $\mathds{1}:X \rightarrow \mathcal{G}=\mathcal{G}_{\{1\}}$, one can show that the \textit{relative tangent space} $T_{\mathds{1}}\mathcal{G}$ of $\mathcal{G}$ at $\mathds{1}$ over $X$ has a structure of a Lie$^*$ algebra over $X$ (see \cite[Sect.~20.4]{FBZ}). Its universal enveloping chiral algebra coincides with the chiral algebra associated with the factorization algebra obtained by applying the linearization functor to $\{\mathcal{G}_{I}\}$. In particular, when $X=\mathbb{A}^1$ and all of these objects are $\mathbb{G}_{a}$-equivariant, we obtain the following result.

\begin{theorem}\label{vertex_ind-scheme_tangent_linearization}
Let $(G,\cdot,1)$ be a vertex ind-scheme. Then
\begin{enumerate}[label=(\roman*)]
    \item The Zariski tangent space $T_{1}G$ has a Lie conformal algebra structure.
    \item The linearization $\textup{Dist}(G,1)$ of $G$ (i.e., the space of all distributions in $G$ supported at $1$) has a vertex algebra structure.
    \item $\mathcal{U}(T_{1}G) \simeq \textup{Dist}(G,1)$ as vertex algebras.
\end{enumerate}
In other words, we have a commutative diagram of functors

\begin{center}
\begin{tikzcd}[row sep=0.5cm, column sep=1cm,every cell/.append style={align=center}]
& $\left\{\textup{Vertex ind-schemes} \right\}$ \arrow[d,"\textup{Dist}(-\text{,}1)"] \arrow[dl,"T_{1}"'] \\
$\left\{\textup{Lie conformal algebras} \right\}$ \arrow[r,"\mathcal{U}"] & $\left\{\textup{Vertex algebras}\right\}$
\end{tikzcd}
\end{center}
\end{theorem}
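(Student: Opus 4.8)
The plan is to transport the entire statement into the setting of factorization monoids, where the analogous facts are already available from \cite[Sect.~20.4]{FBZ}, and then to descend everything to the fiber at the unit point via the equivalences established above. Concretely, by Proposition \ref{equivariant_OPE_monoids_are_vertex_ind-schemes} the vertex ind-scheme $(G,\bigcdot,1)$ corresponds to a $\mathbb{G}_a$-equivariant OPE monoid over $\mathbb{A}^1$, and by Theorem \ref{equivalence_factorization_OPE_monoids} this in turn corresponds to a $\mathbb{G}_a$-equivariant factorization monoid $\{\mathcal{G}_I\}$ over $\mathbb{A}^1$ whose fiber at $0 \in \mathbb{A}^1$ recovers $G$ together with its distinguished point and product. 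All three assertions will then be obtained by specializing $\mathbb{G}_a$-equivariant statements over $\mathbb{A}^1$ to this fiber.

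For (i), I would invoke the result of \cite[Sect.~20.4]{FBZ} that the relative tangent space $T_{\mathds{1}}\mathcal{G}$ of the factorization monoid along its unit carries a canonical Lie$^{*}$ algebra structure over $\mathbb{A}^1$. Since $\{\mathcal{G}_I\}$ and its unit are $\mathbb{G}_a$-equivariant, this Lie$^{*}$ algebra is $\mathbb{G}_a$-equivariant as well, and hence, by the equivalence between Lie conformal algebras and $\mathbb{G}_a$-equivariant Lie$^{*}$ algebras over $\mathbb{A}^1$, it corresponds to a Lie conformal algebra. The key identification to verify is that taking the fiber at $0$ of $T_{\mathds{1}}\mathcal{G}$ (a $\mathcal{D}_{\mathbb{A}^1}$-module of the form $R[x]dx$) returns precisely the Zariski tangent space $T_1 G$ with this Lie conformal structure, compatibly with the fiber-at-zero functor of Proposition \ref{equivariant_OPE_monoids_are_vertex_ind-schemes}. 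For (ii), I would apply the linearization functor (\ref{linearization_functor}) to $\{\mathcal{G}_I\}$, producing a factorization algebra $\{\mathcal{A}_{\mathcal{G},I}^l\}$ which is again $\mathbb{G}_a$-equivariant; by Theorems \ref{equivalence_factorization_chiral} and \ref{translation_equiv_chiral_are_vertex} such a datum is the same as a vertex algebra, and I would check that the fiber at $0$ of $\mathcal{A}_{\mathcal{G},1}=r_*^{(1)}\mathds{1}_!^{(1)}\omega_{\mathbb{A}^1}$ is exactly the space $\textup{Dist}(G,1)$ of distributions on $G$ supported at $1$, this being the fibrewise manifestation of the interpretation of $\mathcal{A}_{\mathcal{G},I}$ as sheaves of delta-functions along $\mathds{1}^{(I)}$.

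Finally, part (iii) and the commutativity of the diagram would follow from the compatibility, also recorded in \cite[Sect.~20.4]{FBZ}, between these two constructions: the universal enveloping chiral algebra of the Lie$^{*}$ algebra $T_{\mathds{1}}\mathcal{G}$ is canonically isomorphic to the chiral algebra attached to the linearization of $\{\mathcal{G}_I\}$. Transporting this isomorphism through the equivalences (Lie conformal $\leftrightarrow$ $\mathbb{G}_a$-equivariant Lie$^{*}$, vertex $\leftrightarrow$ $\mathbb{G}_a$-equivariant chiral, and the matching of the two enveloping functors noted after the Lie$^{*}$ proposition, where $\mathcal{U}(\mathcal{R})$ is the chiral algebra of $\mathcal{U}(R)$) yields $\mathcal{U}(T_1 G)\simeq \textup{Dist}(G,1)$ and the commutativity of the triangle, all arrows being realizations of the same pair of adjoint constructions on the factorization side. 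I expect the main obstacle to be precisely the two fiber-at-zero identifications in (i) and (ii): one must verify that the relative tangent and distribution constructions on the factorization monoid commute with restriction to the fiber and with the $\mathbb{G}_a$-equivariant structure, so that the abstract Lie$^{*}$ and chiral statements over $\mathbb{A}^1$ genuinely specialize to the Lie conformal and vertex statements about $G$, rather than merely being compatible up to the equivalences.
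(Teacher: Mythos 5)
Your proposal is correct and follows essentially the same route as the paper: the paper likewise passes through Proposition \ref{equivariant_OPE_monoids_are_vertex_ind-schemes} and Theorem \ref{equivalence_factorization_OPE_monoids} to the associated $\mathbb{G}_a$-equivariant factorization monoid, invokes \cite[Sect.~20.4]{FBZ} for the Lie$^{*}$ structure on the relative tangent space $T_{\mathds{1}}\mathcal{G}$ and for the identification of its universal enveloping chiral algebra with the linearization, and then specializes to the translation equivariant case over $\mathbb{A}^1$. The fiber-at-zero compatibilities you flag as the main obstacle are exactly the point the paper also leaves implicit, so your write-up is, if anything, slightly more explicit about what needs checking.
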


Recall that for any Lie conformal algebra $R$ we have the isomorphism $R \simeq \text{Lie}\,R_{+}$ given in (\ref{isomorphism_R_Lie_R_+}). In particular we have that $R\simeq \text{Lie}\,R/\text{Lie}\,R_{-}$, and furthermore
\begin{equation*}
R \simeq \widehat{\text{Lie}\,R}/\widehat{\text{Lie}\,R_{-}},
\end{equation*}
where the completions $\widehat{\text{Lie}\,R}$ and $\widehat{\text{Lie}\,R_{-}}$ are taken with respect to the family $\{ \langle a_{m}\,:\,a\in R \wedge m \geq n \rangle \}_{n \geq 0}$.

If we happen to be in the case that $\widehat{\text{Lie}\,R}$ and $\widehat{\text{Lie}\,R_{-}}$ are the Lie algebras of some ind-groups (i.e., ind-schemes whose functor of points can be lifted to a functor $Sch \rightarrow Grp$) $G_{R}$ and $H_{R}$ respectively, then the Zariski tangent space of their quotient ind-scheme $G_{R}/H_{R}$ is isomorphic to $R$. Therefore if we want to find a vertex ind-scheme with $R$ as its tangent Lie conformal algebra, it would be reasonable to start by assuming that its underlying ind-scheme is $G_{R}/H_{R}$. We will dedicate the next section to applying this approach in order to construct the vertex ind-schemes associated with the current and Virasoro conformal algebras.

\section{Examples}\label{examples}

\subsection{The affine Grassmannian as a vertex ind-scheme}

Let $\mathring{G}$ be a reductive algebraic group and let $\mathring{\mathfrak{g}}=\text{Lie }\mathring{G}$. Then we define the \textit{loop group} $L\mathring{G}$ (resp. \textit{positive loop group} $L^{+}\mathring{G}$) of $\mathring{G}$ as the ind-group whose $A$-points for any $\C$-algebra $A$ are given by $L\mathring{G}(A)=\mathring{G}(A((t)))$ (resp. $L^{+}\mathring{G}(A)=\mathring{G}(A[[t]])$).

Their quotient ind-scheme $\mathcal{G}r_{\mathring{G}}$ is known as the \textit{affine Grassmannian} of $\mathring{G}$, and its $A$-points are given by
\begin{equation*}
\mathcal{G}r_{\mathring{G}}(A)= \mathring{G}(A((t))) / \mathring{G}(A[[t]]).
\end{equation*}
This is a formally smooth ind-scheme of \textit{ind-finite type} (i.e., can be written as a colimit of schemes of finite type), and in this case it is reduced,  due to $\mathring{G}$ being reductive (in general, it is reduced if and only if $\text{Hom}(\mathring{G},\mathbb{G}_m)=0$, see \cite{BD2}).

The Lie algebras associated with $L \mathring{G}$ and $L^{+}\mathring{G}$ are $\mathring{\mathfrak{g}}((t))$ and $\mathring{\mathfrak{g}}[[t]]$ respectively, which are the completions of the loop algebra $\text{Lie}\,R$ and the positive loop algebra $\text{Lie}\,R_{-}$ for $R=\text{Cur}\,\mathring{\mathfrak{g}}$, described in Example \ref{current_conformal_algebra}. In particular, the Zariski tangent space of $\mathcal{G}r_{\mathring{G}}$ is isomorphic to $\text{Cur}\,\mathring{\mathfrak{g}}$. We want to define a vertex ind-scheme structure over $\mathcal{G}r_{\mathring{G}}$ so that we recover the Lie conformal algebra structure of $\text{Cur}\,\mathring{\mathfrak{g}}$ from $\mathcal{G}r_{\mathring{G}}$ via Theorem \ref{vertex_ind-scheme_tangent_linearization}. It will exist a consequence of the factorization structure on the Beilinson-Drinfeld Grassmannian, whose construction we now recall.

There is a well-known description of the affine Grassmannian in terms of the moduli space of $\mathring{G}$-bundles over a smooth algebraic curve $X$. For a fixed $x \in X$, let $\mathcal{O}_{x}$ be the completion of the local ring at $x$ and $\mathcal{K}_{x}$ its field of fractions. The choice of a local coordinate $z$ at $x$ yields isomorphisms $\mathcal{O}_{x} \simeq \C[[z]]$ and $\mathcal{K}_{x} \simeq \C((z))$. Now given a $\mathring{G}$-bundle $\mathcal{P}$ over $X$, we can choose trivializations $\mathcal{P} \big|_{X\setminus \{x\}} \xrightarrow{\varphi} \mathring{G} \times (X \setminus \{x\})$ and $\mathcal{P} \big|_{D_{x}} \xrightarrow{s} \mathring{G} \times D_{x}$, where $D_{x}=\text{Spec }\mathcal{O}_{x}$ is the disc at $x$. In order to glue $\mathcal{P}$ back from these trivializations we need the gluing data along their intersection $D^{*}_{x}=\text{Spec }\mathcal{K}_{x}$, which is an element $g \in \mathring{G}(\mathcal{K}_{x})$. The effectiveness of the descent data $(\varphi,s,g)$ is proved in \cite{BL2}, so we obtain a bijection between $\mathring{G}(\mathcal{K}_{x})$ and the moduli space of all triples $(\mathcal{P},\varphi,s)$ as above.

As a direct corollary we obtain the following description of $\mathcal{G}r_{\mathring{G},x}=\mathring{G}(\mathcal{K}_{x})/\mathring{G}(\mathcal{O}_{x})$.

\begin{proposition}\label{local_nature_of_affine_Grassmannian}
\cite{BL1} $\mathcal{G}r_{\mathring{G},x}$ can be identified with the moduli space of pairs $(\mathcal{P},\varphi)$, where $\mathcal{P}$ is a $\mathring{G}$-bundle and $\varphi$ is a trivialization of $\mathcal{P}$ restricted to $X\setminus \{x\}$.
\end{proposition}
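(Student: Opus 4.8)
The plan is to obtain the statement as a direct corollary of the bijection between $\mathring{G}(\mathcal{K}_{x})$ and triples $(\mathcal{P},\varphi,s)$ recalled just above, by understanding the right action of $\mathring{G}(\mathcal{O}_{x})$ on such triples. I would work throughout at the level of the functor of points, so that the identification is one of ind-schemes and not merely of sets: for every $\C$-algebra $A$ I read $\mathcal{G}r_{\mathring{G},x}(A)=\mathring{G}(A((t)))/\mathring{G}(A[[t]])$, and I read a pair $(\mathcal{P},\varphi)$ over $A$ as a $\mathring{G}$-bundle on $X_{A}=X\times\operatorname{Spec}A$ together with a trivialization over $(X\setminus\{x\})\times\operatorname{Spec}A$.

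First I would recall, in this functorial form, the Beauville--Laszlo descent of \cite{BL2} summarized in the text: over any base $\operatorname{Spec}A$, giving a $\mathring{G}$-bundle $\mathcal{P}$ on $X_{A}$ together with trivializations $\varphi$ away from $x$ and $s$ on the formal disc $D_{x}$ is the same as giving the transition element $g\in\mathring{G}(A((t)))$ comparing $\varphi$ and $s$ on the punctured disc $D^{*}_{x}$. This produces a bijection, functorial in $A$, between $\mathring{G}(A((t)))$ and isomorphism classes of triples $(\mathcal{P},\varphi,s)$.

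Next I would analyze the forgetful map $(\mathcal{P},\varphi,s)\mapsto(\mathcal{P},\varphi)$. Since any two trivializations of $\mathcal{P}\big|_{D_{x}}$ differ by a unique element $h\in\mathring{G}(A[[t]])=\mathring{G}(\mathcal{O}_{x})(A)$, the fibers of this map are torsors under $\mathring{G}(\mathcal{O}_{x})$; concretely, replacing $s$ leaves $(\mathcal{P},\varphi)$ untouched and sends the transition element $g$ to $gh$ (the side depending on conventions). Hence two triples have isomorphic underlying pairs precisely when their transition elements lie in the same right $\mathring{G}(\mathcal{O}_{x})$-coset, so forgetting $s$ is exactly the passage to the quotient by the right $\mathring{G}(\mathcal{O}_{x})$-action. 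Therefore the bijection of the previous step descends to a functorial identification $\mathring{G}(A((t)))/\mathring{G}(A[[t]])\xrightarrow{\ \sim\ }\{(\mathcal{P},\varphi)\}/\!\simeq$, which is the assertion.

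The main obstacle is to make the descent genuinely work in families over an arbitrary $A$ rather than at a single geometric point: one must check that the gluing of \cite{BL2} applies to $X_{A}$ with the non-reduced and infinite-dimensional phenomena that can arise, and that the moduli functor of pairs $(\mathcal{P},\varphi)$ is represented by the very ind-scheme $\mathcal{G}r_{\mathring{G},x}$. The formal smoothness and ind-finite type already asserted in the text secure representability, so the substantive point is verifying that the fibers of the forgetful map are honestly $\mathring{G}(\mathcal{O}_{x})$-torsors in families; once this is established the identification is immediate. I would not expect the left-versus-right coset bookkeeping to cause any real difficulty, only notational care.
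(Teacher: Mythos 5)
Your proof is correct and takes essentially the same route as the paper: the paper likewise obtains the proposition as a direct corollary of the Beauville--Laszlo bijection between $\mathring{G}(\mathcal{K}_{x})$ and triples $(\mathcal{P},\varphi,s)$, with the quotient by $\mathring{G}(\mathcal{O}_{x})$ corresponding exactly to forgetting the disc trivialization $s$. Your functor-of-points formulation and the torsor analysis of the fibers of the forgetful map simply make explicit what the paper leaves implicit in calling this a ``direct corollary.''
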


Now the \textit{Beilinson-Drinfeld Grassmannian} is the family of ind-schemes $\mathcal{G}r_{\mathring{G},X^I}$ over $X^I$ for $I \in fSet$ whose $S$-points for any scheme $S$ are given by the set of all triples $(\mathcal{P},\{x_i\}_{i\in I},\varphi)$, where
\begin{itemize}
    \item $\mathcal{P}$ is a $\mathring{G}$-bundle on $X \times S$,
    \item $x_{i}:S \rightarrow X$ is an $S$-point of $X$ for each $i \in I$,
    \item $\varphi$ is a trivialization of $\mathcal{P}$ restricted to $(X \times S) \setminus \bigcup_{i \in I}\Gamma_{x_{i}}$, where $\Gamma_{x_{i}}$ denotes the graph of $x_i$ for each $i\in I$.
\end{itemize}

In particular, the $\C$-points of $\mathcal{G}r_{\mathring{G},X^I}$ form the moduli space
\begin{equation*}
\left\{ \left( \mathcal{P} \in \text{Bun}_{\mathring{G}}(X), \{x_{i}\}_{i \in I} \in X^{I}, \varphi \textit{ trivialization of } \mathcal{P} \Big|_{X \setminus \{x_{i}\}_{i \in I}} \right)\right\}.
\end{equation*}
This family forms a factorization space over $X$ (see \cite[Sect.~5.3.12]{BD1} or \cite[Sect.~20.3.5]{FBZ}). For example, given $x_1 \neq x_2 \in X$, we have a map
\begin{equation*}
\mathcal{G}r_{\mathring{G},X^{2},\{x_1,x_2\}} \rightarrow \mathcal{G}r_{\mathring{G},X,\{x_1\}} \times \mathcal{G}r_{\mathring{G},X,\{x_2\}},
\end{equation*}
where given the data $(\mathcal{P},\varphi)\in \mathcal{G}r_{\mathring{G},X^{2},\{x_1,x_2\}}$, we restrict both $\mathcal{P}$ and $\varphi$ to $X\setminus \{x_2\}$ and $X\setminus \{x_1\}$ respectively, and then we use Proposition \ref{local_nature_of_affine_Grassmannian} to identify $\mathcal{G}r_{\mathring{G},X,\{x\}}=\mathcal{G}r_{\mathring{G},X \setminus \{x_i\},\{x\}}$ for $x_i \neq x$. This map is an isomorphism, since starting from $(\mathcal{P}_{i},\varphi_{i})\in \mathcal{G}r_{\mathring{G},X,\{x_i\}}$ for $i=1,2$ we may glue $\mathcal{P}_{1}$ and $\mathcal{P}_{2}$ into a single $\mathring{G}$-bundle $\mathcal{P}$ along $X \setminus \{x_1,x_2\}$, where both of them are trivialized.

This factorization space has a unit section, given by the trivial $\mathring{G}$-bundle over $X^{I}$ for each $I\in fSet$, and this turns $\{\mathcal{G}r_{\mathring{G},X^{I}}\}$ into a factorization monoid over $X$.

If we return to the setting where $X=\mathbb{A}^{1}$, we have that $\mathcal{G}r_{\mathring{G},\mathbb{A}^{1}}$ is an OPE monoid by Proposition \ref{equivalence_factorization_OPE_monoids}. Moreover, it is translation equivariant and its fiber at zero $\mathcal{G}r_{\mathring{G}}$ is a vertex ind-scheme due to Proposition \ref{equivariant_OPE_monoids_are_vertex_ind-schemes}. Since the linearization of the Beilinson Drinfeld Grassmannian yields by \cite[Thm.~20.4.3]{FBZ} the factorization algebra associated with the \textit{affine chiral algebra} at level zero, which in the $\mathbb{G}_a$-equivariant case over $\mathbb{A}^{1}$ corresponds to the affine vertex algebra $V^{0}(\mathring{\mathfrak{g}})$, we arrive to the following result.

\begin{proposition}
$\mathcal{G}r_{\mathring{G}}$ is a vertex ind-scheme such that its tangent space is the current conformal algebra $\textup{Cur}\,\mathring{\mathfrak{g}}$ and its linearization is the affine vertex algebra $V^{0}(\mathring{\mathfrak{g}})$ at level zero.
\end{proposition}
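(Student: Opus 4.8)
The plan is to assemble the results of the preceding sections together with the factorization structure on the Beilinson--Drinfeld Grassmannian just recalled, feeding the family $\{\mathcal{G}r_{\mathring{G},X^I}\}$ through each of the three functors appearing in Theorem \ref{vertex_ind-scheme_tangent_linearization}. We have observed that $\{\mathcal{G}r_{\mathring{G},X^I}\}$ is a factorization monoid over any smooth curve $X$; specializing to $X=\mathbb{A}^1$ and invoking Theorem \ref{equivalence_factorization_OPE_monoids}, we obtain that $\mathcal{G}r_{\mathring{G},\mathbb{A}^1}$ is an OPE monoid. The first task is therefore to check that this OPE monoid is $\mathbb{G}_a$-equivariant, after which Proposition \ref{equivariant_OPE_monoids_are_vertex_ind-schemes} identifies its fiber at zero with a vertex ind-scheme; and by construction that fiber is $\mathcal{G}r_{\mathring{G},\mathbb{A}^1,\{0\}}=\mathcal{G}r_{\mathring{G}}$.

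For the equivariance I would use the moduli description from Proposition \ref{local_nature_of_affine_Grassmannian} and its $I$-fold analogue: the group $\mathbb{G}_a$ acts on $\mathbb{A}^1$ by translations, and translating the base line carries a datum $(\mathcal{P},\{x_i\},\varphi)$ to the translated datum, which furnishes the required isomorphism $\psi\colon p^*\mathcal{G}r_{\mathring{G},\mathbb{A}^I}\to a^*\mathcal{G}r_{\mathring{G},\mathbb{A}^I}$. The cocycle compatibilities of an equivariant structure are immediate from the functoriality of this operation, the unit section (the trivial bundle) is manifestly translation invariant, and the OPE product is built from gluing bundles away from the marked points, an operation that commutes with translation; hence $(\mathcal{G}r_{\mathring{G},\mathbb{A}^1},\bullet,\mathds{1})$ is $\mathbb{G}_a$-equivariant, proving the first assertion. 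For the tangent space, recall (as discussed before Theorem \ref{vertex_ind-scheme_tangent_linearization}) that the relative tangent space $T_{\mathds{1}}\mathcal{G}r_{\mathring{G},\mathbb{A}^1}$ carries a Lie$^*$ algebra structure, which for the Beilinson--Drinfeld Grassmannian is the Lie$^*$ algebra attached to $\mathring{\mathfrak{g}}$ (see \cite[Sect.~20.4]{FBZ}); under the equivalence between $\mathbb{G}_a$-equivariant Lie$^*$ algebras and Lie conformal algebras, passing to the fiber at zero, this is exactly the current conformal algebra $\text{Cur}\,\mathring{\mathfrak{g}}$, in agreement with the Lie conformal structure of Theorem \ref{vertex_ind-scheme_tangent_linearization}(i). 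Concretely this matches the vector-space computation $T_1\mathcal{G}r_{\mathring{G}}=\mathring{\mathfrak{g}}((t))/\mathring{\mathfrak{g}}[[t]]\simeq\widehat{\text{Lie}\,R}/\widehat{\text{Lie}\,R_-}\simeq R$ for $R=\text{Cur}\,\mathring{\mathfrak{g}}$ already noted above.

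Finally, the linearization is handled by the functor $\text{Dist}(-,1)$: by \cite[Thm.~20.4.3]{FBZ} the linearization (\ref{linearization_functor}) of the factorization monoid $\{\mathcal{G}r_{\mathring{G},X^I}\}$ is the factorization algebra of the affine chiral algebra at level zero, and in the $\mathbb{G}_a$-equivariant case over $\mathbb{A}^1$ its fiber at zero corresponds, via Theorem \ref{translation_equiv_chiral_are_vertex}, to the affine vertex algebra $V^0(\mathring{\mathfrak{g}})$, so that $\text{Dist}(\mathcal{G}r_{\mathring{G}},1)\simeq V^0(\mathring{\mathfrak{g}})$. The three statements are then mutually consistent through Theorem \ref{vertex_ind-scheme_tangent_linearization}(iii) together with the identity $\mathcal{U}(\text{Cur}\,\mathring{\mathfrak{g}})=V^0(\mathring{\mathfrak{g}})$ from Example \ref{current_conformal_algebra}. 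I expect the main obstacle to be the careful verification that the translation action genuinely defines a $\mathbb{G}_a$-equivariant structure compatible with both the OPE product and the unit, so that Proposition \ref{equivariant_OPE_monoids_are_vertex_ind-schemes} applies; by comparison, once equivariance is established, the identification of the tangent Lie conformal algebra and of the linearization reduce to the Lie$^*$-algebra computation in \cite[Sect.~20.4]{FBZ} and to the equivalences assembled in the previous sections.
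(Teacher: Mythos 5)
Your proposal is correct and follows essentially the same route as the paper: the Beilinson--Drinfeld Grassmannian as a factorization monoid, specialized to $\mathbb{A}^1$ and passed through Theorem \ref{equivalence_factorization_OPE_monoids} and Proposition \ref{equivariant_OPE_monoids_are_vertex_ind-schemes}, with the tangent space identified via the loop-algebra completions of $\text{Lie}\,R$ for $R=\text{Cur}\,\mathring{\mathfrak{g}}$ and the linearization via \cite[Thm.~20.4.3]{FBZ}. Your explicit verification of the $\mathbb{G}_a$-equivariance through the moduli description is a detail the paper merely asserts, but it is the natural way to justify that step and is consistent with the paper's argument.
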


\subsection{The Virasoro vertex ind-scheme}

Now let us take $R=\text{Vir}$. In this case, the completions of $\text{Lie}\,R$ and $\text{Lie}\,R_{-}$ are the Lie algebras $\text{Der}\,\C((t))=\C((t))\Del_{t}$ and $\text{Der}\,\C[[t]]=\C[[t]]\Del_{t}$ respectively. These are the Lie algebras corresponding to the ind-groups $\text{Aut}\,\mathcal{K}$ and $\underline{\text{Aut}}\,\mathcal{O}$ (see \cite[Sect.~17.3.4]{FBZ}) whose $A$-points for $A \in Alg_{\C}$ are given respectively by
\begin{align*}
\text{Aut}\,A((t)) &= \left\{ a(t) = \sum_{n \in \Z} a_n t^{n+1} \in A((t))\,\Big|\, a_{0} \in A^{\times},
a_{n} \in \mathfrak{N}(A) \text{ for } n < 0 \right\},\\
\underline{\text{Aut}}\,A[[t]] &= \left\{ a(t) = \sum_{n \geq -1} a_n t^{n+1} \in A[[t]]\,\Big|\, a_{0} \in A^{\times},
a_{-1} \in \mathfrak{N}(A) \right\},
\end{align*}
where $\mathfrak{N}(A)$ is the nilradical of $A$ and where we identify $a(t)$ with its induced automorphism $f(t) \mapsto a(t)f(t)$. Note that $\text{Aut}\,\C((t))=\underline{\text{Aut}}\,\C[[t]]$, which highlights the necessity to consider points over algebras with nilpotents in order to view the ``nilpotent directions'' which give rise to the tangent vectors $L(n)$ for $n < -1$ in $\text{Der}\,\C((t))/\text{Der}\,\C[[t]]$.

This leads us to consider the quotient ind-scheme
\begin{equation*}
\mathcal{V}ir:= \text{Aut}\,\mathcal{K}/ \underline{\text{Aut}}\,\mathcal{O}
\end{equation*}
as our candidate to be a vertex ind-scheme with $\text{Vir}$ as its associated Lie conformal algebra.

Similarly to the case of the affine Grassmannian, there is an interpretation of $\mathcal{V}ir$ in terms of moduli spaces. Let us consider the moduli space $\mathfrak{M}_{g,1}$ of \textit{pointed curves} $(X,x)$, where $X$ is a smooth projective curve of genus $g$ and $x \in X$, and let $\widehat{\mathfrak{M}}_{g,1}$ be the moduli space of triples $(X,x,z)$ where $(X,x) \in \mathfrak{M}_{g,1}$ and $z$ is a formal coordinate at $x$ in $X$. The projection $\widehat{\mathfrak{M}}_{g,1} \rightarrow \mathfrak{M}_{g,1}$ is an $\text{Aut}\,\mathcal{O}$-bundle, where $\text{Aut}\,\mathcal{O}$ is the group scheme whose $A$-points over $A\in Alg_{\C}$ are given by
\begin{equation*}
\text{Aut}\,A[[t]] = \left\{ a(t) = \sum_{n \geq 0} a_n t^{n+1} \in A[[t]]\,\Big|\, a_{0} \in A^{\times}\right\}.
\end{equation*}
Its Lie algebra is $\text{Der}_{0}\,\C[[t]]=t\C[[t]]\Del_t$, which is topologically generated by the $L(n)$ with $n \geq 0$. We have inclusions
\begin{equation*}
\text{Aut}\,\mathcal{O} \subseteq \underline{\text{Aut}}\,\mathcal{O} \subseteq \text{Aut}\,\mathcal{K}\quad \text{and}\quad \text{Der}_{0}\,\C[[t]] \subseteq \text{Der}\,\C[[t]] \subseteq \text{Der}\,\C((t)).
\end{equation*}

The action of $\text{Aut}\,\mathcal{O}$ on $\widehat{\mathfrak{M}}_{g,1}$ can be extended to an action of $\text{Aut}\,\mathcal{K}$. Heuristically, given $(X,x,z)\in \widehat{\mathfrak{M}}_{g,1}$ we can view $X$ as the curve obtained by gluing $X \setminus \{x\}$ and $D_{x}=\text{Spec}\,\mathcal{O}_{x}$ along $D_{x}^{*}=\text{Spec}\,\mathcal{K}_{x}$. But given $\rho \in \text{Aut}\,\mathcal{K}$, we can also glue $X\setminus \{x\}$ and $D_{x}$ in a different way, via twisting by $\rho$. This gluing yields a different  $(X',x',z)\in \widehat{\mathfrak{M}}_{g,1}$, which we take as the definition of $\rho$ acting on $(X,x,z)$. In other words, we could say that the action of the vector $L(-1)$ is responsible for moving the point $x$ within the disc $D_{x}$, and the vectors $L(n)$ for $n \leq -2$ act by changing the complex structure of $X$ around $x$.

More concretely, we have the following result, commonly referred to as the \textit{Virasoro uniformization} of the moduli space of curves, whose proof can be found for example in \cite[Sect.~17.3.4]{FBZ}.

\begin{proposition}\label{Virasoro_uniformization}
There exists a transitive action of $\textup{Aut}\,\mathcal{K}$ on $\widehat{\mathfrak{M}}_{g,1}$ which is compatible with the action of $\textup{Aut}\,\mathcal{O}$ along the fibers of the projection $\widehat{\mathfrak{M}}_{g,1} \rightarrow \mathfrak{M}_{g,1}$.
\end{proposition}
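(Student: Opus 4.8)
The plan is to realize the action through a twisted gluing construction entirely analogous to the one used for the Beilinson--Drinfeld Grassmannian, and then to establish transitivity by an infinitesimal argument. First I would describe the action. Given a triple $(X,x,z) \in \widehat{\mathfrak{M}}_{g,1}$, the coordinate $z$ identifies the disc $D_{x}=\text{Spec}\,\mathcal{O}_{x}$ with $\text{Spec}\,\C[[z]]$ and the punctured disc $D_{x}^{*}=\text{Spec}\,\mathcal{K}_{x}$ with $\text{Spec}\,\C((z))$, and $X$ is recovered by gluing $X\setminus\{x\}$ to $D_{x}$ along $D_{x}^{*}$ via the tautological identification. For $\rho\in\text{Aut}\,\mathcal{K}$ I would re-glue the same two pieces along $D_{x}^{*}$, but twisting the identification by $\rho$; the effectiveness of this descent datum is exactly the content of Lemma \ref{our_version_BL_descent} (equivalently \cite{BL2}), and it produces a new smooth projective curve $X'$ carrying a marked point $x'$ and a formal coordinate $z$ induced from the disc. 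Setting $\rho\cdot(X,x,z)=(X',x',z)$ defines the action, and the group axioms follow from the functoriality of the gluing, since composing two twists $\rho_{1},\rho_{2}$ along $D_{x}^{*}$ glues the same local pieces by $\rho_{1}\rho_{2}$.

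The compatibility with $\text{Aut}\,\mathcal{O}$ is immediate from this description. An element $\rho\in\text{Aut}\,\mathcal{O}\subseteq\text{Aut}\,\mathcal{K}$ extends to an automorphism of the unpunctured disc $D_{x}$, so re-gluing by $\rho$ does not alter the pair $(X,x)$ but only replaces the coordinate $z$ by its image under $\rho$. Hence the $\text{Aut}\,\mathcal{O}$-orbits are precisely the fibers of the projection $\widehat{\mathfrak{M}}_{g,1}\rightarrow\mathfrak{M}_{g,1}$, and the restricted action coincides with the principal $\text{Aut}\,\mathcal{O}$-bundle structure already in place.

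The main obstacle is transitivity, which I would reduce to an infinitesimal statement followed by a connectedness argument. Differentiating the orbit map $\text{Aut}\,\mathcal{K}\rightarrow\widehat{\mathfrak{M}}_{g,1}$, $\rho\mapsto\rho\cdot(X,x,z)$, at the identity yields a map $\text{Der}\,\mathcal{K}_{x}\rightarrow T_{(X,x,z)}\widehat{\mathfrak{M}}_{g,1}$. Using the affine cover $X=(X\setminus\{x\})\cup D_{x}$ with intersection $D_{x}^{*}$, a \v{C}ech computation identifies the deformations of the triple with the quotient
\begin{equation*}
T_{(X,x,z)}\widehat{\mathfrak{M}}_{g,1} \simeq \text{Der}\,\mathcal{K}_{x}\big/\Gamma(X\setminus\{x\},\Theta_{X}),
\end{equation*}
where a vector field regular away from $x$ is viewed inside $\text{Der}\,\mathcal{K}_{x}$ through its Laurent expansion in $z$ (the moving of $x$ and the change of coordinate account for the $\text{Der}\,\mathcal{O}_{x}$ directions, while the remaining quotient recovers $H^{1}(X,\Theta_{X})$). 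This exhibits the differential as a surjection, so every orbit is open.

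The key point is then that $\widehat{\mathfrak{M}}_{g,1}$ is connected, since it fibers over the irreducible $\mathfrak{M}_{g,1}$ with connected fibers $\text{Aut}\,\mathcal{O}$; the complement of an orbit, being a union of other open orbits, is open, so a single open orbit must exhaust the whole space. The delicate part throughout is handling the infinite-dimensional, ind-/pro-scheme nature of $\text{Aut}\,\mathcal{K}$ and of the coordinate directions, which is why I would carry out the surjectivity and the open-orbit argument at the level of the functor of points and appeal to formal smoothness to pass from infinitesimal surjectivity to genuine transitivity, following \cite[Sect.~17.3.4]{FBZ}.
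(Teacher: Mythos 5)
Your proposal is correct and takes essentially the same approach as the paper: the paper defines the action by exactly this twisted re-gluing of $X\setminus\{x\}$ and $D_{x}$ along $D_{x}^{*}$, and delegates the full proof to \cite[Sect.~17.3.4]{FBZ}, which is precisely the infinitesimal-transitivity-plus-connectedness argument you sketch, including the tangent space identification $T_{(X,x,z)}\widehat{\mathfrak{M}}_{g,1}\simeq \text{Der}\,\mathcal{K}_{x}/\Gamma(X\setminus\{x\},\Theta_{X})$. The only minor imprecision is that gluing \emph{schemes}, as opposed to quasi-coherent modules, rests on \cite{BL2} and \cite{M-B} rather than on Lemma \ref{our_version_BL_descent} itself, but you flag this equivalence yourself.
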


The above setting can be generalized to any number of marked points. We define $\mathfrak{M}_{g,n}$ for $n \geq 2$ as the moduli space of smooth projective curves of genus $g$ with $n$ marked points, and $\widehat{\mathfrak{M}}_{g,n}$ the moduli space of collections $(X,\{x_{i},z_{i}\}_{i=1}^{n})$ such that $(X,\{x_{i}\}_{i=1}^{n})\in \mathfrak{M}_{g,n}$ and $z_{i}$ is a local coordinate at $x_i$ in $X$ for each $1 \leq i \leq n$. There is a natural projection $\widehat{\mathfrak{M}}_{g,n} \rightarrow \mathfrak{M}_{g,n}$. We have a generalization of Proposition \ref{Virasoro_uniformization} valid in this case: there exists a transitive action of $(\textup{Der}\,\mathcal{K})^{n}$ on $\widehat{\mathfrak{M}}_{g,n}$ compatible with the action of $(\textup{Aut}\,\mathcal{O})^{n}$ along the fibers of the projection $\widehat{\mathfrak{M}}_{g,n} \rightarrow \mathfrak{M}_{g,n}$.

Now we define the factorization monoid associated to these moduli spaces, following the exposition in \cite{Y}. For any $I \in fSet$ and a fixed smooth curve $X$, we define $\mathcal{G}_{X,I}$ as the ind-scheme over $X^{I}$ whose $S$-points for any scheme $S$ are all $(\mathcal{X},\{x_{i}\}_{i\in I},\{s_{i}\}_{i\in I},\varphi)$ such that
\begin{itemize}
    \item $\mathcal{X}\rightarrow S$ is a family of smooth projective curves over $S$,
    \item $x_{i}:S \rightarrow X$ is an $S$-point of $X$ for each $i \in I$,
    \item $s_{i}:S \rightarrow \mathcal{X}$ is a section of $\mathcal{X} \rightarrow S$ for each $i \in I$,
    \item $\varphi : \mathcal{X} \setminus \bigcup_{i \in I}\Gamma_{s_{i}} \rightarrow (X \times S) \setminus \bigcup_{i \in I}\Gamma_{x_{i}}$ is an isomorphism.
\end{itemize}

In particular, the sets of $\C$-points of the ind-schemes $\mathcal{G}_{X,I}$ are formed by all $(X',\{x_{i}\}_{i\in I},\{x_{i}'\}_{i\in I},\varphi)$ where $X'$ is a smooth projective curve, $\{x_{i}\}_{i\in I}$ and $\{x_{i}'\}_{i\in I}$ are finite sets of points of $X$ and $X'$ respectively, and
\begin{equation*}
\varphi: X \setminus \{x_{i}\}_{i\in I} \xrightarrow{\simeq} X' \setminus \{x_{i}'\}_{i\in I}.
\end{equation*}

Note that in fact there are no $\C$-points in $\mathcal{G}_{X,I}$ over $\{x_{i}\}_{i\in I}$ other than the marked curve $(X,\{x_{i}\}_{i \in I})$ itself, since any isomorphism from the open curve $X \setminus \{x_{i}\}_{i \in I}$ extends canonically to all of $X$. This is no surprise given our earlier observation that $\mathcal{V}ir= \text{Aut}\,\mathcal{K} /\underline{\text{Aut}}\,\mathcal{O}$ has no non-trivial $\C$-points, and confirms the importance of considering points over non-reduced schemes.

The family of ind-schemes $\{\mathcal{G}_{X,I}\}$ forms a factorization space as a consequence of the Virasoro uniformization theorem (see \cite[Sect.~20.4.13]{FBZ}). Moreover, it is a factorization monoid with the ``trivial'' unit sections given by $\{x_i\}_{i\in I} \mapsto (X,\{x_i\}_{i\in I},\{x_i\}_{i\in I},id)$.

Its linearization at this unit yields the factorization algebra associated with the \textit{Virasoro chiral algebra} of central charge zero, which in the $\mathbb{G}_{a}$-equivariant case over $X=\mathbb{A}^{1}$ corresponds to the Virasoro vertex algebra $Vir_{0}$. Therefore, Theorem \ref{vertex_ind-scheme_tangent_linearization} applied to this setting gives us the following result.

\begin{proposition}
$\mathcal{V}ir$ is a vertex ind-scheme such that its tangent space is the Virasoro conformal algebra and its linearization is the Virasoro vertex algebra of central charge zero.
\end{proposition}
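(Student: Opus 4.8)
The plan is to assemble the proposition from the factorization-monoid machinery exactly as in the affine Grassmannian case, the only genuinely new input being a local identification of the fiber. First I would record that $\{\mathcal{G}_{X,I}\}$, specialized to $X=\mathbb{A}^1$, is a factorization monoid over $\mathbb{A}^1$ (already established above as a consequence of the Virasoro uniformization), so that Theorem \ref{equivalence_factorization_OPE_monoids} promotes it to an OPE monoid over $\mathbb{A}^1$. The translation action of $\mathbb{G}_a$ on $\mathbb{A}^1$ transports the marked points $x_i$ and the comparison isomorphism $\varphi$ in the evident way, producing isomorphisms $\psi_I\colon p^{I*}\mathcal{G}_{\mathbb{A}^1,I}\to a^{I*}\mathcal{G}_{\mathbb{A}^1,I}$ satisfying the required cocycle compatibilities; hence the OPE monoid is $\mathbb{G}_a$-equivariant. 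By Proposition \ref{equivariant_OPE_monoids_are_vertex_ind-schemes} its fiber at $0$ is therefore a vertex ind-scheme, and it remains to see that this fiber is $\mathcal{V}ir$.

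The crux is then to identify $\mathcal{G}_{\mathbb{A}^1,\{1\}}\big|_{x=0}$ with $\mathcal{V}ir=\text{Aut}\,\mathcal{K}/\underline{\text{Aut}}\,\mathcal{O}$, which is the Virasoro analogue of Proposition \ref{local_nature_of_affine_Grassmannian}. Given an $S$-point $(\mathcal{X},s_1,\varphi)$ of the fiber, I would restrict $\varphi$ to the formal punctured neighborhood of the section $s_1$ and compare the two coordinates living there — the one intrinsic to $\mathcal{X}$ along $s_1$ and the one pulled back from $\mathbb{A}^1$ at $0$ via $\varphi$ — thereby extracting a coordinate change, i.e.\ an $S$-point of $\text{Aut}\,\mathcal{K}$. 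The ambiguity in the choice of formal coordinate on $\mathcal{X}$ along $s_1$ is exactly $\underline{\text{Aut}}\,\mathcal{O}$, so the assignment descends to the quotient, and conversely the effectiveness of this gluing datum (so that a coordinate change genuinely reconstitutes a curve with a section) is supplied by the Beauville--Laszlo descent of \cite{BL2}, just as in the Grassmannian case. I expect this to be the main obstacle: one must track the non-reduced points with care, since it is precisely the nilpotent coefficient $a_{-1}\in\mathfrak{N}(A)$ permitted in $\underline{\text{Aut}}\,\mathcal{O}$ — the $L(-1)$-direction, which moves the marked point infinitesimally — that separates $\underline{\text{Aut}}\,\mathcal{O}$ from $\text{Aut}\,\mathcal{O}$ and makes the quotient come out correctly. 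The remark that $\mathcal{G}_{X,I}$ has no nontrivial $\C$-points over $\{x_i\}$, matching the fact that $\mathcal{V}ir$ has none, is the reduced shadow of this identification.

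With the fiber identified, part (i) follows from the discussion at the end of Section \ref{vertex_ind-schemes}: the Zariski tangent space is $T_1\mathcal{V}ir\simeq\text{Der}\,\C((t))/\text{Der}\,\C[[t]]=\widehat{\text{Lie}\,\text{Vir}}/\widehat{\text{Lie}\,\text{Vir}_{-}}\simeq\text{Vir}$ as a $\C[T]$-module, using the Lie-algebra identifications of Example \ref{Virasoro_conformal_algebra}, and Theorem \ref{vertex_ind-scheme_tangent_linearization}(i) endows it with the full Lie conformal algebra structure, which is that of $\text{Vir}$. For parts (ii) and (iii) I would invoke the cited fact that the linearization of $\{\mathcal{G}_{X,I}\}$ at the trivial unit is the factorization algebra of the Virasoro chiral algebra of central charge zero — which over $X=\mathbb{A}^1$ in the $\mathbb{G}_a$-equivariant setting corresponds to $Vir_0$ — together with Theorem \ref{vertex_ind-scheme_tangent_linearization}(ii)--(iii), yielding $\text{Dist}(\mathcal{V}ir,1)\simeq\mathcal{U}(T_1\mathcal{V}ir)=\mathcal{U}(\text{Vir})=Vir_0$ as vertex algebras. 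The consistency of $\mathcal{U}(T_1\mathcal{V}ir)=\mathcal{U}(\text{Vir})$ also reconfirms the Lie conformal identification in (i), closing the proof.
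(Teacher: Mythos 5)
Your proposal is correct and follows essentially the same route as the paper: pass from the factorization monoid $\{\mathcal{G}_{\mathbb{A}^1,I}\}$ (given by Virasoro uniformization) to a $\mathbb{G}_a$-equivariant OPE monoid via Theorem \ref{equivalence_factorization_OPE_monoids}, take the fiber at zero using Proposition \ref{equivariant_OPE_monoids_are_vertex_ind-schemes}, and then read off the tangent space and linearization from Theorem \ref{vertex_ind-scheme_tangent_linearization} together with the identification of the linearization with the Virasoro chiral algebra of central charge zero. The only difference is one of explicitness: you spell out the identification of the fiber with $\mathcal{V}ir=\text{Aut}\,\mathcal{K}/\underline{\text{Aut}}\,\mathcal{O}$ (extraction of a coordinate change, the $\underline{\text{Aut}}\,\mathcal{O}$-ambiguity coming from the nilpotent $a_{-1}$, and descent to reconstitute the curve), a step the paper treats only heuristically through its gluing discussion, and your account of it correctly isolates the relevant subtlety.
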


We would like to end this work by mentioning some applications of vertex ind-schemes that would be interesting to pursue for the study of vertex algebras in general and Lie conformal algebras in particular. We have not treated vertex operator algebras (VOAs), but it should not be difficult to define ``vertex operator ind-schemes'' by including the additional data of an embedding of the vertex ind-scheme $\mathcal{V}ir$ satisfying certain compatibilities. We chose not to include the ``twisted examples'' related to affine vertex algebras $V^{k}(\mathring{\mathfrak{g}})$ at level $k$ and Virasoro vertex algebras $Vir_{c}$ of central charge $c$ because these are not universal envelopes of Lie conformal algebras, although it could be possible to adapt the definitions to this case in a similar fashion to what is done in \cite[Sect.~3.7.20]{BD1}. On the other hand, the most important application that we want to continue researching is the study of \textit{modules over vertex ind-schemes}, since defining and studying these objects may offer interesting insights in relation with modules over Lie conformal algebras. For example, we expect that if $\mathcal{G}$ is a vertex ind-scheme with associated Lie conformal algebra $R$, then the category of ``integrable'' modules over $R$, i.e. those modules where the action of $R$ lifts to an action of $\mathcal{G}$, should have some of the nice properties found in the classical theory of Lie algebras.

\end{document}